\definecolor{antiquefuchsia}{rgb}{0.57, 0.36, 0.51}
\definecolor{azure}{rgb}{0.0, 0.5, 1.0}
\renewcommand*{\backref}[1]{}
\renewcommand*{\backrefalt}[4]{%
    \ifcase #1 (Not cited.)%
    \or        (Cited on page~#2.)%
    \else      (Cited on pages~#2.)%
    \fi}
\def\th@plain{%
	\thm@notefont{}% same as heading font
	\itshape % body font
}
\def\th@definition{%
	\thm@notefont{}% same as heading font
	\normalfont % body font
}
\newcommand\res{\mathop{\hbox{\vrule height 7pt width .3pt depth 0pt\vrule height .3pt width 5pt depth 0pt}}\nolimits}
\numberwithin{equation}{section}
\newtheorem{theorem}{Theorem}[section]
\newtheorem{lemma}[theorem]{Lemma}
\newtheorem{proposition}[theorem]{Proposition}
\newtheorem{corollary}[theorem]{Corollary}
\theoremstyle{definition}
\newtheorem{definition}[theorem]{Definition}
\theoremstyle{remark}
\newtheorem{remark}[theorem]{Remark}
\newcommand{\N}{\mathbb{N}}
\newcommand{\R}{\mathbb{R}}
\newcommand{\mres}{\mathbin{\vrule height 1.6ex depth 0pt width
0.13ex\vrule height 0.13ex depth 0pt width 1.3ex}}
\DeclareMathOperator{\spt}{spt}
\DeclareMathOperator{\dist}{dist}
\DeclareMathOperator{\vol}{vol}
\DeclareMathOperator{\tr}{tr}
\DeclareMathOperator{\Sing}{Sing}
\DeclareMathOperator{\YM}{YM}
\newcommand{\g}{\mathfrak{g}}
\newcommand{\s}{\mathbb{S}}
\newcommand{\Ups}{\Upsilon}
\newcommand{\eps}{\varepsilon}
\title[Uniqueness of tangents and log-epiperimetric inequality]{Almost minimizing Yang--Mills fields: log-epiperimetric inequality, non-concentration, and uniqueness of tangents} 
\author[Riccardo Caniato]{Riccardo Caniato}
\address{ \mbox{\it{(R. Caniato)} } California Institute of Technology, Department of Mathematics, 1200 E California Blvd, MC 253-37, CA 91125, United States of America \& Simons Laufer Mathematical Institute, 17 Gauss Way, Berkeley, CA 94720-5070, United States of America}
\email{rcaniato@caltech.edu}
\author[Davide Parise]{Davide Parise}
\address{ \mbox{\it{(D. Parise)} } University of California San Diego, Department of Mathematics, 9500 Gilman Drive \#0112, La Jolla, CA 92093-0112, United States of America \& Simons Laufer Mathematical Institute, 17 Gauss Way, Berkeley, CA 94720-5070, United States of America}
\email{dparise@ucsd.edu}
\date{\today} 
\begin{document}
 
\begin{abstract}
    We establish a direct log-epiperimetric inequality for Yang--Mills fields in arbitrary dimension and we leverage on it to prove uniqueness of tangent cones with isolated singularity for energy minimizing Yang--Mills fields and $\omega$-ASD connections (where $\omega$ is not necessarily closed) satisfying some suitable regularity assumptions. En route to this we establish a Luckhaus type lemma for Yang--Mills connections to exclude curvature concentration along blow-up sequences. 
\end{abstract}

\maketitle

\begin{center}
    \textbf{MSC 2020}: 58E15 (primary), 53C07.
\end{center}

\tableofcontents

\section{Introduction}
\subsection{The general setting: Yang--Mills fields}
The aim of this article is to understand the behaviour of Yang--Mills connections at their singular points, and prove uniqueness of the corresponding tangent cones whenever they happen to satisfy certain structural properties. This fits broadly into a line of investigation aiming at understanding the regularity of extrema of geometric variational problems, and solutions of partial differential equations of geometric type. 

Let us recall the framework we will be working in. Let $G$ be a compact matrix Lie group with Lie algebra $\mathfrak{g}$ and $(N,h)$ a smooth $n$-dimensional Riemannian manifold with $n\ge 2$, possibly with smooth boundary $\partial N$. The \textit{Yang--Mills functional} on a principal $G$-bundle $P$ over $N$ is given by 
\begin{align} \label{equation: Yang-Mills intro}
        \YM_N(A):=\int_{N}\lvert F_A\rvert^2\, d\vol_h \qquad\forall\, A\in (W^{1,2}\cap L^4)(N,T^*N\otimes\g_P), 
\end{align}
where $F_A:=dA+A\wedge A\in L^2(N,\wedge^2T^*N\otimes\g_P)$ is the curvature of the principal $G$-connection $A$, and the norm $\lvert F_A\rvert$ is computed with respect to the $\operatorname{Ad}$-invariant inner product on the \textit{adjoint bundle} $\g_P$\footnote{The adjoint bundle $\g_P$ is the vector bundle $\tilde\pi:\mathfrak{g}_P\to N$ over $N$ whose total space is given by
    \begin{align*}
        \mathfrak{g}_P:=P\times_{\operatorname{Ad}}\mathfrak{g}=\frac{P\times\mathfrak{g}}{\sim_{\operatorname{Ad}}}
    \end{align*}
    where
    \begin{align*}
        (p_1,v_1)\sim_{\operatorname{Ad}}(p_2,v_2) \quad\Leftrightarrow\quad \exists\,g\in G \mbox{ : } p_2=p_1g,\,v_2=g^{-1}v_1g
    \end{align*}
    and whose projection on the base manifold $N$ is defined to be $\tilde\pi([(p,v)]):=\pi(p)$ for every $[(p,v)]\in\mathfrak{g}_P$.}
over $N$. 
Critical points of $\YM_N$ are called \textit{Yang--Mills fields} or \textit{Yang--Mills connections} on $P$.\footnote{If $P:=N\times G$ is the trivial principal $G$-bundle over $N$, for short we said that critical points of $\operatorname{YM}_N$ are Yang--Mills fields on $N$.} Geometrically, the Yang--Mills energy measures how far a certain connection is from being flat in the $L^2$-sense. From an analytic perspective, $\YM_N$ is a conformally invariant\footnote{Meaning invariant with respect to rescalings in the domain.} lagrangian in its 
\textit{critical} dimension $n=4$, whose properties make the associated variational problems rich and challenging.  
%characterized by exceptionally challenging properties turning the associated variational problems into a rich area of research. 
For instance, the functional is \textit{gauge invariant} in the following precise sense: for every \textit{local gauge transformation} $g\in(W^{2,2}\cap W^{1,4})(N,G_P)$\footnote{Here $G_P$ stands for the \textit{conjugated bundle}, i.e. the $G$-bundle over $N$ whose total space is given by
    \begin{align*}
        G_P:=P\times_{c}G=\frac{P\times G}{\sim_c}
    \end{align*}
    where
    \begin{align*}
        (p_1,h_1)\sim_{c}(p_2,h_2) \quad\Leftrightarrow\quad \exists\,g\in G \mbox{ : } p_2=p_1g,\,h_2=g^{-1}h_1g
    \end{align*}
    and whose projection on the base manifold $N$ is defined to be $\hat\pi([(p,h)]):=\pi(p)$ for every $[(p,h)]\in G_P$.} 
and for every connection $A$, the \textit{gauge transformed} connection
\begin{align*}
    A^g:=g^{-1}dg+g^{-1}Ag\in (W^{1,2}\cap L^4)(N,T^*N\otimes\g_P)
\end{align*}
satisfies
\begin{align*}
    \YM_N(A^g)=\YM_N(A).
\end{align*}
The presence of such a large invariance group for $\mathrm{YM}_N$ leads to several remarkable analytical consequences. For example, the Euler--Lagrange equations, called \textit{Yang--Mills equations}, and the stability operator associated with $\YM_N$ are \textit{not elliptic}, at least until a proper \textit{relative Coulomb gauge} is chosen, cf. Remark \ref{Remark: ellipticity of the second variation of the YM-energy discrepancy}. Besides, the nonlinearity $A\wedge A$ appearing in these equations is unwieldy, as it drastically breaks the coercivity of the functional leading to concentration-compactness phenomena in dimension $n\ge 4$.\footnote{See e.g. \cite{uhlenbeck-connections} and \cite{tian-gauge-theory}, where the authors studied the concentration-compactness phenomena for the Yang--Mills functional respectively in critical and supercritical dimension.}

The Yang--Mills functional has played a major role in the understanding of the differential geometry of 4-manifolds. Let us recall a few key examples, without aiming for completeness (see e.g. \cite{donaldson-kronheimer} or \cite{freed-uhlenbeck} for a detailed discussion of the subject). Donaldson proved his celebrated result on the existence of non-smoothable topological $4$-manifolds by studying properties of the the moduli space of instantons (special symmetric solutions of the Yang--Mills equations) over such manifolds (see \cite{Donaldson}). Notably, these manifolds had already been constructed by Freedman a year earlier in his solution to the topological 4-dimensional Poincaré conjecture \cite{Freedman}. A second remarkable application of gauge theory in this context is the proof of existence of exotic differentiable structures on $\R^4$. The existence of at least one ``fake'' $\R^4$ stems largely from the aforementioned works of Donaldson and Freedman,  with Gompf later providing an explicit construction and showing the existence of at least three such exotic structures \cite{Gompf}. Finally, Taubes in \cite{Taubes} achieved the sharpest result in this sense, showing the existence of uncountably many fake $\R^4$'s by means of gauge theoretic methods. 

Given the effective applications of $\YM_N$ in four dimensions, it is natural to investigate its behaviour in higher dimensions, i.e. in the \textit{supercritical} regime. Notably, a research program along these lines was proposed by Donaldson and Thomas in \cite{DonaldsonThomas}. Unfortunately, the analysis of the Yang--Mills lagrangian becomes more challenging in dimension greater than four and we are confronted with the study of \textit{singular} solutions, as they naturally arise in this wilder framework. As in other geometric variational problems, it is useful to consider tangent cones, \textit{tangent connections} in this case. These are weak limits of rescalings of the original connection, capturing the local behaviour around a given point. As these limiting objects are supposed to model the behaviour around a (singular) point, understanding whether they are unique is an important issue. A priori, at a given singular point the connection may asymptotically approach one cone at certain scales, and a different cone at others.

A method to prove uniqueness of tangents with \textit{isolated singularity} was pionereed by Simon in \cite{AsymptoticSimon} for stationary varifolds, and harmonic maps. The idea is to prove an infinite dimensional version of the classical \L ojasiewicz gradient inequality for analytic functions in Euclidean space, cf. Lemma \ref{lemma: finite dimensional Lojasiewicz inequality}. In gauge theory, this method was first introduced by Morgan, Mrowka, and Ruberman in their work \cite{MMR} on the Chern--Simons functional, while R\aa de \cite{Rade} later adapted Simon's proof in dimensions 2 and 3 for the Yang--Mills functional. It was only later that Yang \cite{Yang} used Simon's method in any dimension to prove that, under strong curvature bounds, tangent cones at isolated singularities are unique. In this article we follow a different approach to this old question, and we remove the assumption on the curvature, thus generalising Yang's result \cite{Yang}. 
%En route to this, we identify minimal regularity assumptions to exclude curvature concentration. 
We also refer the reader to more recent works of Feehan, partly in collaboration with Maridakis, establishing various gradient \L ojasiewicz inequalities \cite{FeehanMemoir, Feehan, FeehanMaridakis, FeehanMaridakisCrelle}. 

\L ojasiewicz--Simon inequality type arguments are not directly applicable if the singularity of the cone is not isolated, as they usually require the cone to have a smooth link. In the case of pseudoholomorphic maps and semicalibrated currents, slicing techniques proved to be extremely effective (see e.g. \cite{pumberger-riviere}, \cite{riviere-tian-maps}, \cite{caniato-riviere}). In the particular case of integral $p-p$ cycles, we also mention the work \cite{bellettini-p} by Bellettini, in which the author develops the so called \textit{algebraic blow-up method}. We currently conjecture that these type of techniques could be the key to tackle the uniqueness of tangents for general $\omega$-ASD connections\footnote{See Definition \ref{Definition: omega-ASD connections}.} on general almost complex manifolds. Note that in the setting of stationary integral $1$-varifolds, uniqueness of tangents is completely settled by \cite{AllardAlmgren}. We refer the reader to the surveys \cite{de2022regularity, wickramasekera2014regularity} for further details on the uniqueness of tangent cones problem in the setting of minimal submanifolds. 
\subsection{Statement of the main results}
Since all our results are local, from now on we will work on the trivial principal $G$-bundle $P=\Omega\times G$ over an open set $\Omega\subset\R^n$. This corresponds to taking $\g_P=\g$ and $G_P=G$ in the previous definitions and notation. In comparison with previous literature on the subject, we make minimal regularity assumptions on the class of connections under consideration. Secondly, we go beyond the K\"ahler setting, in which the richer complex structure, and in particular the existence of local holomorphic coordinates, simplifies the analysis and allows for finer considerations. See below for further details on this aspect. Besides, our proof is purely variational, and does not exploit any underlying PDE, thus allowing us to treat in a unified and more systematic way a larger class of extrema, i.e. almost minimizers. Our first main theorem is the following. 

\begin{theorem}[Uniqueness of tangents]\label{Theorem: uniqueness of tangent connections with isolated singularities}
    Let $G$ be a compact matrix Lie group with Lie algebra $\mathfrak{g}$. Let $n\ge 5$ and let $\Omega\subset\R^n$ be an open set. Assume that the following facts hold.
    \begin{enumerate}
        \item $A\in (W^{1,2}\cap L^4)(\Omega,T^*\Omega\otimes\mathfrak{g})$ is either an $\omega$-ASD connection on $\Omega$ or a $\operatorname{YM}$-energy minimizer\footnote{See Definition \ref{Definition: almost YM-energy minimizers}.} such that $\mathscr{H}^{n-4}(\operatorname{Sing}(A)\cap K)<+\infty$ for every compact set $K\subset\Omega$.\footnote{Here and in what follows, we let 
        \begin{align*}
            \operatorname{Reg}(A):=\{x\in\Omega \mbox{ : } A\in C^{\infty}(B_{\rho}(x),T^*B_{\rho}(x)\otimes\mathfrak{g}) \mbox{ for some } \rho>0\}
        \end{align*}
        and $\operatorname{Sing}(A)=\Omega\smallsetminus\operatorname{Reg}(A)$. Moreover, $\mathscr{H}^{k}$ denotes the $k$-dimensional Hausdorff measure on $\R^n$.}
        \item $y\in\operatorname{Sing}(A)$ and $\varphi$ is a tangent cone for $A$ at $y$ such that $\operatorname{Sing}(\varphi)=\{0\}$ and $F_{A_{y,\rho_i}}\to F_{\varphi}$ strongly in $L^2$ (modulo gauge transformations) along some sequence of rescalings $\rho_i\to 0$ as $i\to+\infty$.
    \end{enumerate}
    \noindent
    Then, $\varphi$ is the unique tangent cone to $A$ at $y$ (modulo gauge transformations). Moreover, the decay is logarithmic, i.e. there exist $\alpha>0$ and constants $C_k>0$ for every $k\in\N$ such that
    \begin{equation*}
       \vert \tau_{y,\rho}^*A - \varphi \vert_{C^k(\mathbb{S}^{n - 1})} \leq C_k \vert \log(\rho) \vert^{-\alpha}, 
    \end{equation*}
    where $\tau_{y,\rho}(x) = \rho x + y$ is the rescaling of factor $\rho>0$ centered at $y$. Moreover, if we assume $\varphi$ to be integrable, the rate of convergence improves to 
    \begin{equation*}
        \vert \tau_{y,\rho}^*A - \varphi \vert_{C^k(\mathbb{S}^{n - 1})} \leq C_k\rho^{\alpha}.     
    \end{equation*}
\end{theorem}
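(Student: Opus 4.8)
The plan is to follow the \L ojasiewicz--Simon strategy, but replace the classical gradient inequality with the \emph{log-epiperimetric inequality} announced in the abstract, applied to a suitable monotone quantity. First I would set up the analytic framework: after centering at $y$, normalize via Uhlenbeck's gauge-fixing theorem so that on each dyadic annulus the rescaled connections $A_{y,\rho}$ live in a relative Coulomb gauge with respect to the candidate tangent cone $\varphi$; this is what makes the Yang--Mills equations elliptic (cf. the remark on ellipticity) and is also where hypothesis (2)—strong $L^2$ convergence of the curvatures along one sequence $\rho_i\to0$—gets used, to guarantee that the gauge-fixing can be performed uniformly and that $\varphi$ is genuinely attained (no curvature lost to infinity). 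The non-concentration input here is exactly the Luckhaus-type lemma for Yang--Mills connections mentioned in the abstract: it lets one interpolate between scales without concentration of $|F_A|^2$, which is needed to compare the energy on the annulus with the energy of the ``harmonic/cone-like'' competitor built over the link $\mathbb{S}^{n-1}$.

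The heart of the argument is a differential inequality for the \emph{energy discrepancy} $e(\rho):=\rho^{4-n}\YM_{B_\rho(y)}(A)-\Theta$, where $\Theta$ is the density (the limit of the monotone quantity, which exists by monotonicity since $A$ is an $\omega$-ASD connection or a YM-minimizer). Writing $W(\rho)$ for the ``boundary energy'' of the trace of $\tau_{y,\rho}^*A$ on $\mathbb{S}^{n-1}$ and comparing with the cone over that trace, the log-epiperimetric inequality says there are $\gamma\in(0,1)$ and $\epsilon>0$ with $W - \Theta \le (1-\epsilon)(\text{Dir of boundary map}) \cdot (1 + |\log(W-\Theta)|^{-\gamma})$ (with the clean power form $W-\Theta\le(1-\epsilon)(\cdots)$ in the integrable case). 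Plugging this into the first-variation identity relating $e'(\rho)$, $e(\rho)$, and $\int_{\mathbb{S}^{n-1}}|\partial_\rho(\tau^*_{y,\rho}A)|^2$ yields, after the standard manipulation, the ODE differential inequality $\tfrac{d}{dt}\big(e(e^{-t})\big) \lesssim -\,\big(e(e^{-t})\big)\,|\log e(e^{-t})|^{-\gamma}$ in the log-variable $t=|\log\rho|$, or $\tfrac{d}{dt} e \lesssim -e^{1}$ type decay in the integrable case. Integrating this gives $e(\rho)\lesssim|\log\rho|^{-\alpha}$ (resp. $\rho^{\alpha}$), and the same computation controls $\int_0^1 \rho^{-1}\big(\int_{\mathbb{S}^{n-1}}|\partial_\rho(\tau^*_{y,\rho}A)|^2\big)^{1/2}\,d\rho<\infty$, which forces $\tau^*_{y,\rho}A$ to be Cauchy in $L^2(\mathbb{S}^{n-1})$ as $\rho\to0$, hence to converge to a \emph{unique} limit $\varphi$ independent of the sequence.

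To upgrade from the $L^2$-rate to the stated $C^k(\mathbb{S}^{n-1})$-rate, I would feed the $L^2$ decay back into the elliptic Yang--Mills system in the relative Coulomb gauge and run standard $\eps$-regularity and Schauder/interior elliptic estimates on unit-size annuli (scaling back), converting an $L^2$ smallness bound at scale $\rho$ into a $C^k$ bound with the loss absorbed into the constants $C_k$; the monotone character of $e$ means the rate is inherited without deterioration. The main obstacle, and the genuinely new contribution, is establishing the log-epiperimetric inequality itself in this gauge-theoretic setting: unlike in the minimal-surface or obstacle-problem settings, here one must quotient by the infinite-dimensional gauge group and deal with the non-elliptic, non-coercive nature of $\YM$ before fixing the gauge, and the competitor construction over the link must be done $G$-equivariantly while keeping track that the correction is small in the right norm—this is precisely where the Luckhaus-type non-concentration lemma is indispensable, since without it the competitor's energy cannot be controlled by the boundary data alone. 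A secondary technical point is that $\omega$ need not be closed, so the monotonicity formula carries an error term that must be shown to be subordinate to $e(\rho)$ (e.g. of order $\rho^{\beta}e(\rho)$ or $\rho^{\beta}$) and hence harmless in the integration step.
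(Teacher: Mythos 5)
Your high-level architecture matches the paper: log-epiperimetric inequality applied on dyadic annuli, a differential inequality for the normalized density discrepancy $e(\rho)$, integration to get the logarithmic (resp.\ polynomial in the integrable case) rate, and a Dini-type criterion (Proposition \ref{Proposition: Dini decrease implies uniqueness of tangent cone} in the paper) to conclude uniqueness, followed by $\varepsilon$-regularity and interior elliptic estimates to upgrade from $L^2$ to $C^k$ on the link. That part is on target.

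However, you misattribute the role of the Luckhaus-type lemma, and this is a genuine conceptual gap. First, in the proof of Theorem \ref{Theorem: uniqueness of tangent connections with isolated singularities} the Luckhaus lemma plays \emph{no role at all}: the hypothesis that $F_{A_{y,\rho_i}}\to F_\varphi$ strongly in $L^2$ is \emph{assumed} in item (2), so non-concentration is already built into the statement. The Luckhaus lemma (Lemma \ref{Lemma: Luckhaus type lemma for connections}) is only invoked later (Lemma \ref{Lemma: no concentration}) to \emph{remove} that hypothesis under the stronger regularity assumption $A\in W^{1,(n-1)/2}\cap L^{n-1}$ in Theorem \ref{Theorem: higher dimensions} and Corollary \ref{Corollary: dimension 5}. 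Second, and more substantively, you claim the Luckhaus lemma is ``indispensable'' for controlling the energy of the competitor in the log-epiperimetric inequality ``by the boundary data alone''; that is not what happens. The competitor in Theorem \ref{Theorem: (log)-epiperimetric inequality for Yang--Mills functional} is built purely from the trace on $\mathbb{S}^{n-1}$ via the slicing lemma (Lemma \ref{Lemma: slicing lemma}), a Lyapunov--Schmidt reduction (Lemma \ref{Lemma: Lyapunov-Schmidt reduction}) to isolate the finite-dimensional kernel of the second variation (after fixing a relative Coulomb gauge to restore ellipticity), and a finite-dimensional gradient flow of the reduced energy; the finite-dimensional \L ojasiewicz inequality (Lemma \ref{lemma: finite dimensional Lojasiewicz inequality}) is what produces the logarithmic loss. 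The Luckhaus lemma never enters this construction.

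A smaller issue: the differential inequality you write, $\tfrac{d}{dt}\,e \lesssim -\,e\,|\log e|^{-\gamma}$, is not the one implied by the epiperimetric inequality in the form $\mathscr{Y}(\hat A)\le(1-\varepsilon|\mathscr{Y}(\tilde A)|^\gamma)\mathscr{Y}(\tilde A)$. The correct ODE is of the type $g'(t)\lesssim-\,g(t)^{1+\gamma}$ in the log-variable, which integrates to $g(t)\lesssim t^{-1/\gamma}$, i.e.\ $e(\rho)\lesssim|\log\rho|^{-1/\gamma}$. Your ODE would yield a much faster, stretched-exponential decay, which is not what is proved. You reach the correct conclusion but via an incorrectly stated intermediate ODE.
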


\begin{remark}
    Throughout this work, $\omega$ we will never assume that $\omega$ is closed. This entails that our $\omega$-ASD connections are just ``almost'' $\YM$-energy minimizers\footnote{See Definition \ref{Definition: almost YM-energy minimizers}.} in general. 
\end{remark}
\noindent
We briefly comment the main hypotheses in the above theorem. 
\begin{itemize}
\item $\mathscr{H}^{n-4}(\operatorname{Sing}(A)\cap K)<+\infty$ for every compact set $K\subset\Omega$. This in particular implies that $A$ is an \textit{admissible connection} in the sense of \cite{tian-gauge-theory}. Thus, by \cite{TaoTian}, an $\eps$-regularity statement is available for $A$. In fact, if $A$ belongs to any class with such property, for example the \textit{strongly approximable connections} introduced in \cite{meyer-riviere}, then we can immediately prove Theorem \ref{Theorem: uniqueness of tangent connections with isolated singularities} following the argument in \cite[Section 3.15]{simon-book}. We also point out upcoming work of the first author with Rivière in which an $\varepsilon$-regularity statement is obtained for \textit{weak $L^2$-connections} in dimension $5$, first introduced by \cite{petrache-riviere} as a suitable variational framework for the Yang--Mills lagrangian in the first supercritical dimension. 
\item The tangent cone $\varphi$ has an isolated singularity at the origin. This is the usual hypothesis appearing in \cite{AsymptoticSimon, EngelsteinSpolaorVelichkov, ESVduke} and beyond which it is incredibly difficult to go. In the setting of mean curvature flow, Colding and Minicozzi have been able to deal with (generic) \textit{cylindrical} singularities, see \cite{ColdingMinicozzi}. We also refer the reader to work of Székelyhidi on cylindrical tangent cones, \cite{Gabor}, and to earlier work of Simon \cite{CylindricalJDG, CylindricalCAG}. 
\item Non-concentration of the measures, i.e. emptyness of the bubbling locus. This is another additional hypothesis due to the potential bubbling of sequences of Yang--Mills connections. In particular, the curvatures of the rescaled connections $\tau_{r, y}^* A$ could exhibit a concentration set larger than the actual singular set of $\varphi$. This possibily is ruled out in the context of harmonic maps. Indeed, when considering a sequence of \textit{energy minimizing} harmonic maps weakly converging in $W^{1, 2}$, we know that the convergence is in fact strong \cite{schoen-uhlenbeck}, and the limit is energy minimizing as well \cite{HardtLin, Luckhaus}.   
\end{itemize}
We overcome this last difficulty by proving the a Luckhaus type lemma for Yang--Mills connections, currently not available in literature, thus allowing us to completely exclude curvature concentration in dimension 5. We can also rule out this phenomenon in higher dimensions, upon requiring the connection to be more regular (cf. Lemma \ref{Lemma: no concentration}). See \cite{Waldron} for related issues of concentration for the parabolic Yang--Mills flow. 

%\begin{theorem}[Uniqueness of tangents with isolated singularity in dimension 5] \label{theorem: dimension 5}
 %   Let $G$ be a compact matrix Lie group with Lie algebra $\mathfrak{g}$. Let $\Omega\subset\R^5$ be an open set. Let $A\in (W^{1,2}\cap L^4)(\Omega,T^*\Omega\otimes\mathfrak{g})$ be either an $\omega$-ASD connection on $\Omega$ or a $\operatorname{YM}$-energy minimizer. Let $y\in\operatorname{Sing}(A)$ be an isolated singularity for $A$ such that there exists a tangent cone $\varphi$ to $A$ at $y$ such that $\operatorname{Sing}(\varphi)=\{0\}$. Then, $\varphi$ is the unique tangent cone to $A$ at $y$ (modulo gauge transformations). Moreover, the decay is logarithmic, i.e. we have the expansion 
%    \begin{equation*}
%        \tau_{r, y}^*A = \varphi + \varepsilon, 
%    \end{equation*}
%    where $\tau_{y, r}(x) = \rho x + y$ is the dilation map, and the error term $\varepsilon$ satisfies the following asymptotics 
%    \begin{equation*}
%       \vert \tau_{r, y}^*A - \varphi \vert_{C^k(\mathbb{S}^{4})} \leq C_k \vert \log(r) \vert^{-\alpha}, 
%    \end{equation*}
%    for every $k \in \mathbb{N}$, and for some $\alpha > 0$, and a constant $C_k$. Moreover, if we assume $\varphi$ to be integrable, the rate of convergence improves to 
%    \begin{equation*}
%        \vert \tau_{r, y}^*A - \varphi \vert_{C^k(\mathbb{S}^{4})} \leq C_k r^{-\alpha},
%    \end{equation*}
%    for every $k \in \mathbb{N}$, and for some $\alpha > 0$, and a constant $C_k$.
%\end{theorem}

\begin{theorem}[Uniqueness of tangents excluding concentration] \label{Theorem: higher dimensions}
    Let $G$ be a compact matrix Lie group with Lie algebra $\mathfrak{g}$. Let $n\ge 5$ and let $\Omega\subset\R^n$ be an open set. Assume that the following facts hold.
    \begin{enumerate}
        \item $A\in (W^{1,\frac{n-1}{2}}\cap L^{n-1})(\Omega,T^*\Omega\otimes\mathfrak{g})$ is either an $\omega$-ASD connection on $\Omega$ or a $\operatorname{YM}$-energy minimizer such that $\mathscr{H}^{n-4}(\operatorname{Sing}(A)\cap K)<+\infty$ for every compact set $K\subset\Omega$. 
        \item $y\in\operatorname{Sing}(A)$ and $\varphi$ is a tangent cone for $A$ at $y$ such that $\operatorname{Sing}(\varphi)=\{0\}$.
    \end{enumerate}
    \noindent
    Then, $\varphi$ is the unique tangent cone to $A$ at $y$ (modulo gauge transformations). Moreover, the decay is logarithmic, i.e. there exist $\alpha>0$ and constants $C_k>0$ for every $k\in\N$ such that
    \begin{equation*}
       \vert \tau_{y,\rho}^*A - \varphi \vert_{C^k(\mathbb{S}^{n - 1})} \leq C_k \vert \log(\rho) \vert^{-\alpha}, 
    \end{equation*}
    where $\tau_{y,\rho}(x) = \rho x + y$ is the rescaling of factor $\rho>0$ centered at $y$. Moreover, if we assume $\varphi$ to be integrable, the rate of convergence improves to 
    \begin{equation*}
        \vert \tau_{y,\rho}^*A - \varphi \vert_{C^k(\mathbb{S}^{n - 1})} \leq C_k\rho^{\alpha}.     
    \end{equation*}
\end{theorem}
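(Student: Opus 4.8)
Theorem~\ref{Theorem: higher dimensions} should follow from Theorem~\ref{Theorem: uniqueness of tangent connections with isolated singularities} together with the Luckhaus-type non-concentration lemma (Lemma~\ref{Lemma: no concentration}) quoted in the introduction. The only gap between the two statements is hypothesis~(2): in Theorem~\ref{Theorem: uniqueness of tangent connections with isolated singularities} one assumes \emph{a priori} that the curvatures $F_{A_{y,\rho_i}}$ converge \emph{strongly} in $L^2$ (modulo gauge) to $F_\varphi$ along some subsequence, whereas here we only assume that $\varphi$ is a tangent cone with isolated singularity, and we have upgraded the regularity of $A$ to $A\in(W^{1,\frac{n-1}{2}}\cap L^{n-1})(\Omega,T^*\Omega\otimes\mathfrak g)$. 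So the entire content of the reduction is: \emph{under the stronger integrability, weak subconvergence of the blow-ups to a tangent cone with isolated singularity automatically improves to strong $L^2$-convergence of the curvatures}, and then Theorem~\ref{Theorem: uniqueness of tangent connections with isolated singularities} applies verbatim and yields the same conclusions (including the two decay rates, logarithmic in general and polynomial in the integrable case).

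\textbf{Step 1: set up the blow-up sequence.} Fix $y\in\operatorname{Sing}(A)$ and let $\varphi$ be a tangent cone at $y$ with $\operatorname{Sing}(\varphi)=\{0\}$. By definition of tangent cone there is a sequence $\rho_i\to 0$ and gauge transformations $g_i$ such that the rescaled, gauge-transformed connections $A_{y,\rho_i}^{g_i}$ converge to $\varphi$ weakly in $W^{1,2}_{\mathrm{loc}}$ (and hence $F_{A_{y,\rho_i}^{g_i}}\rightharpoonup F_\varphi$ weakly in $L^2_{\mathrm{loc}}$), say on an annulus $B_2\smallsetminus\overline{B_{1/2}}$ or on all of $\R^n\smallsetminus\{0\}$. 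The monotonicity formula for (almost) Yang--Mills connections gives a uniform bound on $\operatorname{YM}$ on fixed balls, so passing to a further subsequence the measures $|F_{A_{y,\rho_i}^{g_i}}|^2\,dx$ converge weakly-$*$ to a limit measure $\mu = |F_\varphi|^2\,dx + \nu$, where $\nu\ge 0$ is the (possible) defect/bubbling measure, supported by the usual $\eps$-regularity on a closed set of finite $\mathscr H^{n-4}$-measure.

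\textbf{Step 2: kill the defect measure via the Luckhaus-type lemma.} This is the crux. Because $\varphi$ has an isolated singularity, away from the origin the limit connection $\varphi$ is smooth, so $\nu$ can only be concentrated near $0$; on the other hand, by the cone structure and the scaling of $\mathscr H^{n-4}$, if $\nu$ carried any mass near $0$ it would have to carry mass on a set of positive $\mathscr H^{n-4}$-measure (a cone over a point, contributing nothing to dimension $n-4$ away from $0$, forces the concentration to sit exactly at $0$ with a mass atom, which then contradicts the density monotonicity), so one is reduced to showing $\nu(\{0\})=0$, i.e. no curvature concentrates at the tip. Here I invoke Lemma~\ref{Lemma: no concentration}: under the hypothesis $A\in W^{1,\frac{n-1}{2}}\cap L^{n-1}$, the Luckhaus-type interpolation lemma for Yang--Mills connections allows one to build comparison connections that interpolate between two nearby slices at scales $\rho_i$ and $\rho_i/2$ with quantitatively controlled energy, and combining this with the (almost-)minimality of $A$ (recall an $\omega$-ASD connection is an almost-$\operatorname{YM}$-minimizer) forces the energy of the rescalings on the annulus to pass to the limit, i.e. $\nu\equiv 0$ on annuli and $\nu(\{0\})=0$. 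Consequently $F_{A_{y,\rho_i}^{g_i}}\to F_\varphi$ \emph{strongly} in $L^2_{\mathrm{loc}}(\R^n\smallsetminus\{0\})$, and by the cone structure and monotonicity, strongly in $L^2$ across the tip as well (on fixed balls).

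\textbf{Step 3: apply the isolated-singularity theorem and conclude.} With strong $L^2$-convergence of the curvatures now established, hypothesis~(2) of Theorem~\ref{Theorem: uniqueness of tangent connections with isolated singularities} is met — note also that the stronger integrability $W^{1,\frac{n-1}{2}}\cap L^{n-1}\subset W^{1,2}\cap L^4$ for $n\ge 5$, so hypothesis~(1) there is a fortiori satisfied, and the structural hypothesis $\mathscr H^{n-4}(\operatorname{Sing}(A)\cap K)<\infty$ is identical. Therefore Theorem~\ref{Theorem: uniqueness of tangent connections with isolated singularities} applies and yields uniqueness of the tangent cone $\varphi$ modulo gauge, together with the logarithmic decay estimate
\[
    \vert \tau_{y,\rho}^*A - \varphi \vert_{C^k(\mathbb{S}^{n - 1})} \leq C_k \vert \log(\rho) \vert^{-\alpha},
\]
and the improved rate $\vert \tau_{y,\rho}^*A - \varphi \vert_{C^k(\mathbb{S}^{n - 1})} \leq C_k\rho^{\alpha}$ when $\varphi$ is integrable. \textbf{The main obstacle} is Step~2: proving that no curvature bubbles off at the tip. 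This is precisely where the higher integrability $A\in W^{1,\frac{n-1}{2}}\cap L^{n-1}$ is used, since the Luckhaus-type patching argument requires enough integrability of $A$ (and of $F_A$, via the Sobolev embedding on the $(n-1)$-sphere) to control the energy of the interpolated connection on a thin annular region; in dimension $5$ the threshold $\frac{n-1}{2}=2$ coincides with the natural energy space, which is why Theorem~\ref{Theorem: uniqueness of tangent connections with isolated singularities} already covers $n=5$ unconditionally, while for $n\ge 6$ the extra regularity is genuinely needed to run the non-concentration argument.
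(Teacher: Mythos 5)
Your high-level strategy is the same as the paper's: the paper states explicitly that Theorem~\ref{Theorem: higher dimensions} (and Corollary~\ref{Corollary: dimension 5}) ``follow directly'' from the non-concentration Lemma~\ref{Lemma: no concentration}, which upgrades weak convergence of the blow-ups to strong $L^2$-convergence of the curvatures, after which Theorem~\ref{Theorem: uniqueness of tangent connections with isolated singularities} applies verbatim. Your Steps~1 and~3 and the overall reduction are exactly this, including the correct observation that $W^{1,\frac{n-1}{2}}\cap L^{n-1}\subset W^{1,2}\cap L^4$ for $n\ge 5$.

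However, the heuristic you offer in Step~2 before invoking Lemma~\ref{Lemma: no concentration} is not correct, and it misrepresents what the lemma actually proves. You write that since $\varphi$ is smooth away from the origin, ``$\nu$ can only be concentrated near $0$,'' and hence one is reduced to ruling out a mass atom at the tip. This is false in general: the defect measure $\nu$ can be supported on the blow-up locus $\Sigma\smallsetminus\operatorname{Sing}(\varphi)$, i.e.\ curvature may concentrate at points where the weak limit $\varphi$ is perfectly smooth --- this is precisely the phenomenon discussed in Section~\ref{sec: preliminaries} on the concentration set, and it is the reason the lemma is needed in the first place. The hypothesis $\operatorname{Sing}(\varphi)=\{0\}$ enters the proof of Lemma~\ref{Lemma: no concentration} for a different reason: it guarantees that $\varphi$ has small curvature in $L^{\frac{n-1}{2}}$ on spheres near $\mathbb{S}^{n-1}$, so that Uhlenbeck's gauge extraction can be applied there. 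Moreover, the Luckhaus-type competitor does not interpolate ``between two nearby slices at scales $\rho_i$ and $\rho_i/2$''; rather, it interpolates in a thin outer annulus between the trace of $A_{y,\rho_i}$ on a sphere of radius $r_i\approx 1$ and the trace of a rescaled copy of the \emph{tangent cone} $\varphi$ on a slightly smaller sphere, and the resulting competitor equals $\varphi$ (up to gauge and scaling) on the inner ball. Almost-minimality then gives $\limsup_i\operatorname{YM}_{\mathbb{B}^n}(A_{y,\rho_i})\le\operatorname{YM}_{\mathbb{B}^n}(\varphi)$, which together with lower semicontinuity forces $\nu\equiv 0$ globally in one stroke --- there is no intermediate reduction to ``$\nu(\{0\})=0$.'' Since you invoke Lemma~\ref{Lemma: no concentration} as a black box your conclusion is not vitiated, but the surrounding explanation should be corrected to reflect how the lemma actually works.
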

As a corollary of the above theorem, we obtain the following sharp result in dimension 5 for connections in the natural class $W^{1, 2} \cap L^4$ appearing in Theorem \ref{Theorem: uniqueness of tangent connections with isolated singularities}. 
\begin{corollary} \label{Corollary: dimension 5}
    When $n = 5$, the class of connections of Theorem \ref{Theorem: higher dimensions} reduces to the natural one $(W^{1, 2} \cap L^4)(\Omega,T^*\Omega\otimes\mathfrak{g})$, thus implying that we have curvature non-concentration in dimension 5 in the setting of Theorem \ref{Theorem: uniqueness of tangent connections with isolated singularities}. 
\end{corollary}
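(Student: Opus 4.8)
The plan is essentially bookkeeping at the level of exponents. For $n = 5$ one has $\frac{n-1}{2} = 2$ and $n - 1 = 4$, so that
\begin{equation*}
    \left(W^{1,\frac{n-1}{2}}\cap L^{n-1}\right)(\Omega,T^*\Omega\otimes\mathfrak{g}) = \left(W^{1,2}\cap L^4\right)(\Omega,T^*\Omega\otimes\mathfrak{g}),
\end{equation*}
which is precisely the class in which the Yang--Mills functional \eqref{equation: Yang-Mills intro} is defined and the one appearing in Theorem \ref{Theorem: uniqueness of tangent connections with isolated singularities}. Hence Theorem \ref{Theorem: higher dimensions} applies verbatim, with no extra regularity requirement, to any $A \in (W^{1,2}\cap L^4)(\Omega,T^*\Omega\otimes\mathfrak{g})$ satisfying hypothesis (1) there.

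Next I would record what this gives in the setting of Theorem \ref{Theorem: uniqueness of tangent connections with isolated singularities}. The sole hypothesis of that theorem not already present in Theorem \ref{Theorem: higher dimensions} is the strong $L^2$ convergence $F_{A_{y,\rho_i}} \to F_\varphi$ of the rescaled curvatures along some sequence $\rho_i \to 0$, i.e. emptiness of the bubbling locus. I would invoke Lemma \ref{Lemma: no concentration} (the Luckhaus-type lemma for Yang--Mills connections): exactly because in dimension $5$ the exponents $\frac{n-1}{2}$ and $n-1$ collapse to the natural $2$ and $4$, it rules out curvature concentration along blow-up sequences for every $A \in (W^{1,2}\cap L^4)(\Omega,T^*\Omega\otimes\mathfrak{g})$ with $\mathscr{H}^{1}(\operatorname{Sing}(A)\cap K) < +\infty$ for all compact $K \subset \Omega$. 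Therefore hypothesis (2) of Theorem \ref{Theorem: uniqueness of tangent connections with isolated singularities} holds automatically when $n = 5$, and the conclusion --- uniqueness of the tangent cone modulo gauge, together with the logarithmic (resp. polynomial, in the integrable case) decay --- follows at once.

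I expect no genuine obstacle here: the substantive content sits in Lemma \ref{Lemma: no concentration} and Theorem \ref{Theorem: higher dimensions}. The only place where $n = 5$ is used in an essential way, rather than as a mere reindexing, is that the non-concentration estimate requires no regularity beyond $W^{1,2}\cap L^4$ in this dimension; accordingly, the one point to be careful about is to quote Lemma \ref{Lemma: no concentration} with the sharp exponents and to carry the compact-exhaustion condition $\mathscr{H}^{n-4}(\operatorname{Sing}(A)\cap K) < +\infty$ through unchanged.
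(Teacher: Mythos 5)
Your proof is correct and takes exactly the paper's route: the whole content of the corollary is the exponent bookkeeping $\frac{n-1}{2}=2$, $n-1=4$ when $n=5$, after which Theorem \ref{Theorem: higher dimensions} applies to the natural class and Lemma \ref{Lemma: no concentration} supplies the strong $L^2$ convergence of rescaled curvatures that appears as an explicit hypothesis in Theorem \ref{Theorem: uniqueness of tangent connections with isolated singularities}. The paper itself states that both Theorem \ref{Theorem: higher dimensions} and Corollary \ref{Corollary: dimension 5} follow directly from Lemma \ref{Lemma: no concentration}, which is precisely what you do.
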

\begin{remark} \label{remark: examples}
Although not explicitly stated, it follows from the two-step degeneration theory developed in \cite{ChenSunIMRN, ChenSunDuke, ChenSunGT, ChenSunInvent} that the logarithmic decay is the best possible one. Indeed, when the algebraic tangent cone and the analytic one coincide, the rate is polynomial. In particular, considering examples where the two types of cones are different would give the desired logarithmic decay. We believe that it would still be interesting to construct such examples more explicitly, as done for minimal surfaces and harmonic maps in \cite[Section 5]{AdamsSimon}, see also \cite{GulliverWhite}. We also refer the reader to \cite{ColdingMinicozziInvent} for a similar issue in the setting of Einstein manifolds, and to \cite{SunZhangInvent} for the one of singular K\"ahler--Einstein metrics.  
\end{remark}

In the special case in which the underlying manifold is K\"ahler $(X, \omega)$ and the vector bundle $(E, H)$ is Hermitian, more refined structural results have been obtained. For instance, the special class of \textit{admissible Hermitian Yang--Mills} connections has received a lot of interest in recent years, see \cite{ChenSunDuke} for the definition. For instance, in loc. cit. the authors are able to relate tangents cones of admissible Hermitian Yang--Mills connections at an isolated singularity to the complex algebraic geometry of the underlying reflexive sheaf (modulo an extra hypothesis on the Harder--Narasimhan--Seshadri filtration). See also \cite{JacobEnriqueWalpuski} for a proof in the case in which the vector bundle over $\mathbb{CP}^{n - 1}$ is a direct sum of polystable bundles. A crucial tool in \cite{ChenSunDuke} is the notion of \textit{algebraic tangent cone} already mentioned in Remark \ref{remark: examples}, and how it relates to the notion of (analytic) tangent cone that we introduced earlier in this introduction. Very loosely speaking this is a torsion-free sheaf over an exceptional divisor satisfying some extra requirements, and it carries information on the underlying singularities. In \cite{ChenSunGT} Chen and Sun were able to establish an algebro-geometric characterization of the bubbling set, always in the setting of isolated singularities (see also \cite{ChenSunIMRN}). They then go on and conclude with \cite{ChenSunInvent} by fully resolving this dichotomy between different notions of cones. We note that a key analytic tool of \cite{ChenSunDuke}, similar to our log-epiperimetric inequality, is a convexity result in the form of a \textit{three circle lemma}. We conclude this subsection by motivating the study of this class of connections. 
\begin{enumerate}[(i)]
\item From the complex geometric point of view, Bando and Siu \cite{BandoSiu} proved that polystable reflexive sheaves over a compact K\"ahler manifold always admit an admissible Hermitian Yang--Mills connection. This generalised the Donaldson--Uhlenbeck--Yau theorem for holomorphic vector bundles, meaning that these objects are relevant in algebraic geometry. 
\item From the gauge theoretic perspective, by \cite{Nakajima, tian-gauge-theory} admissible Hermitian Yang--Mills connections naturally arise in the compactification of the moduli space of smooth ones. Therefore, they play an important role in understanding the structure of the moduli space in gauge theory over higher dimensional K\"ahler manifolds. 
\item When doing gauge theory over $G_2$ manifolds, it is expected that singularities of this special class of connections in dimension three model singularities of $G_2$ instantons. We refer the reader to \cite{JacobWalpuski, EarpWalpuski} for further details on this. 
\end{enumerate}

%Identify the analytic multiplicities and the algebraic multiplicities for the blow-up locus in the Hermitian-Yang--Mills flow case. 

%Notion of algebraic tangent cone, which we recall now. Let $B \subset \mathbb{C}^n$ be the unit ball and $\mathcal{E}$ a reflexive analytic coherent sheaf over $B$. Denote by $\hat{B}$ the blow-up of $B$ at $0$. An algebraic tangent cone of $\mathcal{E}$ at $0$ is a torsion-free sheaf $\underline{\hat{\mathcal{E}}}$ over the exceptional divisor $p^{-1}(0)$, such that $\underline{\hat{\mathcal{E}}} = \iota_* \hat{\mathcal{E}}$ for some $\hat{\mathcal{E}} \in \mathcal{A}$, where $\mathcal{A}$ denotes the set of isomorphism classes of all extensions of $\mathcal{E}$ at $0$. 

The main ingredient in the proof of Theorem \ref{Theorem: uniqueness of tangent connections with isolated singularities}, Theorem \ref{Theorem: higher dimensions}, and Corollary \ref{Corollary: dimension 5} is a new log-epiperimetric inequality for the Yang--Mills lagrangian, a quantitative estimate on the suboptimality of the homogeneous extension that we now describe, cf. Theorem \ref{Theorem: (log)-epiperimetric inequality for Yang--Mills functional} for the main estimate.  

\subsection{Epiperimetric Inequalities} Direct epiperimetric inequalities were introduced in seminal work of Reifenberg \cite{Reifenberg} in the context of minimal surfaces with the aim of proving that solutions of the Plateau problem, as posed by the author, were analytic \cite{ReifenbergAnalytic}. White later exploited this idea in \cite{White} to establish uniqueness of tangent cones for two-dimensional area-minimizing integral currents without boundary in $\mathbb{R}^n$. This result was then extended by Rivière in \cite{Riviere} where the author introduced the notion of \textit{lower} epiperimetric inequality, and proved the corresponding uniqueness of tangents. We note that as a consequence of this inequality, Rivière exhibited and investigated the phenomenon of \textit{splitting before tilting}, pivotal for the proof of regularity of $1-1$ integral currents joint with Tian \cite{RiviereTian}, and crucial for later developments on the regularity theory of area-minimizing and semicalibrated currents by De Lellis, Spadaro, and Spolaor, see \cite{CenterManifold, BranchedCenterManifold, Spolaor}. As part of a program to investigate the regularity of two-dimensional almost minimal currents, these last three authors recently established in \cite{DeLellisSpadaroSpolaor} an epiperimetric inequality in this setting, thus generalising the aforementioned work of White. 
 
The introduction of direct epiperimetric inequalities to the framework of free boundary problems is due to Spolaor and Velichkov in \cite{SpolaorVelichkov}. Subsequently, these last two authors, together with Colombo, proved a novel \textit{logarithmic epiperimetric inequality} in the context of obstacle-type problems \cite{ColomboSpolaorVelichkov1, ColomboSpolaorVelichkov2}. In a nutshell, this is a quantitative estimate on the optimality of the homogeneous extension which gives a logarithmic decay to the blow-up. The additional terms in the inequality are due to the possible presence of elements in the kernel of a suitable linearized operator. In \cite{ESVduke}, always Spolaor and Velichkov, together with Engelstein, developed a new approach for proving this inequality based on reducing it to a quantitative estimate for a functional defined on the unit sphere, and studying the corresponding gradient flow. This was done for the Alt-Caffarelli functional, but the new perspective found fruitful applications to the study of multiplicity-one stationary cones with isolated singularities \cite{EngelsteinSpolaorVelichkov}, bearing with them new $\varepsilon$-regularity results for almost minimizers. See also \cite{SpolaorVelichkovEng} and \cite{SymmetricEdelenSpolaorVelichkov}. 

All of the results mentioned above are direct in the sense that they are based on an explicit construction of a competitor. This is usually more adapted to establishing decay estimates around singular points. However, a large class of epiperimetric inequalities are proven by contradiction. These are based on linearization techniques and the contradiction arguments appearing in the literature apply to regular points or singular points with additional structural hypothesis. In the setting of minimal submanifolds we mention works of Taylor on area-minimizing flat chains modulo 3 and $(\mathbf{M}, \varepsilon, \delta)$-minimizers \cite{TaylorFlatChains,  TaylorSoapBubble, TaylorEllipsoidal}, while for free boundary problems the first instance of an epiperimetric inequality is due to Taylor in \cite{TaylorCapillarity}. Later on, Weiss in \cite{Weiss} introduced an epiperimetric inequality in the setting of the classical obstacle problem at flat singular points and along the top stratum of the singular set. On the other hand, for the thin obstacle problem, we refer the reader to \cite{FocardiSpadaro, GarofaloPetrosyanMariana}. Eventually, for free boundary problems for harmonic measures we mention work of Badger, Engelstein, and Toro \cite{Badger} whose great novelty is to apply an epiperimetric inequality for functions that do not minimize any energy. Our main contribution to this line of investigation in the setting of the Yang--Mills Lagrangian is the following. 

\begin{theorem}[Log-epiperimetric inequality]\label{Theorem: (log)-epiperimetric inequality for Yang--Mills functional}
    %\phantom{.}\\
    Let $G$ be a compact matrix Lie group with Lie algebra $\mathfrak{g}$ and let $n\ge 5$. Let $A_0\in C^{\infty}(\mathbb{S}^{n-1},T^*\mathbb{S}^{n-1}\otimes\mathfrak{g})$ be a smooth Yang--Mills connection and define the 1-form $\tilde A_0\in (W^{1,2}\cap L^4)(\mathbb{B}^n,T^*\mathbb{B}^n\otimes\mathfrak{g})$ to be its 0-homogeneous extension inside $\mathbb{B}^n$, given by
    \begin{align*}
        \tilde A_0:=\bigg(\frac{\cdot}{\lvert\,\cdot\,\rvert}\bigg)^*A_0.
    \end{align*}
    There exist constant $\varepsilon, \delta > 0$, and $\gamma \in [0, 1)$ depending on the dimension and $A_0$ such that the following holds. If $A\in C^{2,\alpha}(\mathbb{S}^{n-1},T^*\mathbb{S}^{n-1}\otimes\mathfrak{g})$ is such that
    \begin{align*}
        \|A-A_0\|_{C^{2,\alpha}(\mathbb{S}^{n-1})}<\delta
    \end{align*}
    then there exists $\hat A\in (W^{1,2}\cap L^4)(\mathbb{B}^n,T^*\mathbb{B}^n\otimes\mathfrak{g})$ such that $\iota_{\mathbb{S}^{n-1}}^*\hat  A=A$ and 
    \begin{align} \label{equation: main epiperimetric inequality}
        \mathscr{Y}_{\mathbb{B}^n}(\hat A\,;\tilde A_0)\le\big(1-\varepsilon\lvert\mathscr{Y}_{\mathbb{B}^n}(\tilde A\,;\tilde A_0)\rvert^{\gamma}\big)\mathscr{Y}_{\mathbb{B}^n}(\tilde A\,;\tilde A_0),
    \end{align}
    where $\tilde A\in (W^{1,2}\cap L^4)(\mathbb{B}^n,T^*\mathbb{B}^n\otimes\mathfrak{g})$ is the $0$-homogeneous extension of $A$ inside $\mathbb{B}^n$. Furthermore, if the kernel of the second variation is integrable, we can take $\gamma = 0$. 
\end{theorem}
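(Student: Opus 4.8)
The plan is to follow the gradient-flow strategy of Engelstein--Spolaor--Velichkov \cite{ESVduke, EngelsteinSpolaorVelichkov}, adapted to the Yang--Mills energy discrepancy $\mathscr{Y}$. The key point is that, after fixing a good gauge, the problem of producing a competitor $\hat A$ with lower discrepancy reduces to a quantitative statement for the functional induced by $\mathscr{Y}$ on the sphere, namely a ``$\sqrt{}$-\L ojasiewicz'' type inequality near the critical point $A_0$. Concretely, I would first rewrite, for a $0$-homogeneous extension of a boundary datum $B$ on $\mathbb{S}^{n-1}$, the discrepancy $\mathscr{Y}_{\mathbb{B}^n}(\tilde B; \tilde A_0)$ in polar coordinates as an explicit functional $\mathscr{W}(B)$ on $\mathbb{S}^{n-1}$ (a combination of the Dirichlet/curvature energy on the sphere and a zero-order term coming from the radial part of $F$), whose critical points are exactly the Yang--Mills connections on $\mathbb{S}^{n-1}$ and which vanishes (to second order) at $A_0$. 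The second variation of $\mathscr{W}$ at $A_0$, modulo the gauge action, is an elliptic self-adjoint operator $L_{A_0}$ with finite-dimensional kernel $\mathcal{K}$.

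The core analytic input is the \textit{finite-dimensional \L ojasiewicz inequality} (Lemma \ref{lemma: finite dimensional Lojasiewicz inequality} in the excerpt), applied to the restriction of $\mathscr{W}$ to the (finite-dimensional) kernel direction after a Lyapunov--Schmidt reduction: writing a nearby boundary connection as $A = A_0 + \kappa + \psi$ with $\kappa \in \mathcal{K}$ and $\psi \perp \mathcal{K}$, one solves away the $\psi$-component (using invertibility of $L_{A_0}$ on $\mathcal{K}^\perp$ and the smoothness of $\mathscr{W}$), obtaining a finite-dimensional reduced function $f(\kappa) = \mathscr{W}(A_0 + \kappa + \psi(\kappa))$, real-analytic on $\mathcal{K}$, hence satisfying $|\nabla f(\kappa)| \geq c |f(\kappa)|^{1-\theta}$ for some $\theta \in (0, 1/2]$. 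Translating back, this yields the differential inequality
\begin{align*}
\big\| \nabla_{\mathbb{S}^{n-1}} \mathscr{W}(A) \big\|_{L^2} \geq c\, \big| \mathscr{W}(A) \big|^{1 - \theta}
\end{align*}
for all $A$ with $\|A - A_0\|_{C^{2,\alpha}} < \delta$ (when $\mathcal{K}$ is integrable one can take $\theta = 1/2$, which will be the source of the improved polynomial decay / the case $\gamma = 0$). The competitor is then constructed by flowing the boundary datum along the (negative) $L^2$-gradient of $\mathscr{W}$ for a short time $t \in [0, T]$, say $A(t) = A - t\,\nabla_{\mathbb{S}^{n-1}}\mathscr{W}(A) + (\text{higher order})$, and gluing: on the annulus $\{|x| \in (e^{-T}, 1)\}$ take the logarithmically-reparametrized flow as a ``path'' of connections (i.e. $\hat A(r\theta)$ interpolates from $A$ at $r = 1$ to $A(T)$ at $r = e^{-T}$), and on the ball $\{|x| \leq e^{-T}\}$ take the $0$-homogeneous extension of $A(T)$. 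A direct computation of $\mathscr{Y}_{\mathbb{B}^n}(\hat A; \tilde A_0)$ for this competitor — the annular part contributes $\int_0^T(\mathscr{W}(A(t)) + \tfrac12\|\dot A(t)\|^2)\,dt$ up to lower order, the inner ball contributes $\mathscr{W}(A(T))/(n - \text{hom. exponent})$ — combined with the \L ojasiewicz inequality above (which forces $\mathscr{W}$ to decay along the flow at a controlled rate) and optimizing in $T$, produces exactly \cref{equation: main epiperimetric inequality} with $\gamma = \gamma(\theta) = (1 - 2\theta)/(1 - \theta) \in [0, 1)$.

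Two technical points require care. First, the \textbf{gauge invariance}: $\mathscr{Y}$ and $\mathscr{W}$ are only gauge-invariant, not gauge-fixed, so the second variation $L_{A_0}$ is degenerate along infinitesimal gauge transformations; one must work in a relative Coulomb gauge on $\mathbb{S}^{n-1}$ (cf. Remark \ref{Remark: ellipticity of the second variation of the YM-energy discrepancy}), in which $L_{A_0}$ is genuinely elliptic and the Lyapunov--Schmidt reduction goes through, and then check that the whole construction, and in particular inequality \cref{equation: main epiperimetric inequality}, is insensitive to the gauge choice. Second, \textbf{regularity and the function spaces}: the competitor $\hat A$ must lie in $(W^{1,2}\cap L^4)(\mathbb{B}^n, T^*\mathbb{B}^n \otimes \mathfrak{g})$ with the correct trace $A$ on $\mathbb{S}^{n-1}$; since $A \in C^{2,\alpha}$ and the flow stays in a small $C^{2,\alpha}$-neighborhood for short time by standard parabolic estimates, the annular piece is smooth away from the origin and the singularity at $0$ from the inner $0$-homogeneous piece is mild enough (being modeled on $\tilde A_0$) to be in $W^{1,2}\cap L^4$ in dimension $n \geq 5$ — this last point is where $n \geq 5$ enters. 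The main obstacle, and the place where the argument genuinely departs from the free-boundary literature, is establishing the \L ojasiewicz inequality for $\mathscr{W}$ with \emph{uniform} constants over all connections $C^{2,\alpha}$-close to $A_0$ — i.e. globalizing the infinitesimal (second-variation plus analyticity) information to a genuine neighborhood — and simultaneously controlling the gauge degeneracy; this is precisely the analogue of the ``three circle lemma'' convexity estimate of \cite{ChenSunDuke} alluded to in the introduction, and I expect it to be the heart of the proof.
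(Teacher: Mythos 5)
Your high-level reading is correct: the proof does follow the Engelstein--Spolaor--Velichkov strategy, it does begin with the slicing lemma (Lemma~\ref{Lemma: slicing lemma}) to rewrite $\mathscr{Y}_{\mathbb{B}^n}$ via a functional on $\mathbb{S}^{n-1}$, it does fix a relative Coulomb gauge to restore ellipticity of the second variation (Remark~\ref{Remark: ellipticity of the second variation of the YM-energy discrepancy}), and it does invoke the Lyapunov--Schmidt reduction and the finite-dimensional \L ojasiewicz inequality (Lemma~\ref{lemma: finite dimensional Lojasiewicz inequality}). However, the way you propose to close the argument has a genuine gap, and it is precisely the one the paper flags.

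You propose to translate the finite-dimensional \L ojasiewicz inequality for the reduced function $f(\kappa) = \mathscr{W}(A_0 + \kappa + \Upsilon(\kappa))$ back into an \emph{infinite-dimensional} \L ojasiewicz--Simon inequality $\|\nabla\mathscr{W}(A)\|_{L^2} \geq c\,|\mathscr{W}(A)|^{1-\theta}$, and then to flow the \emph{entire} boundary trace along the negative $L^2$-gradient of $\mathscr{W}$. The paper's remark after the outline of the proof explicitly addresses this route: it notes that a \L ojasiewicz--Simon inequality for Sobolev connections would indeed allow one to ``simply flow inwards the full trace, and not just its projection onto the kernel,'' but that such an inequality is only known in dimensions $2, 3, 4$ (R\aa de) and is an open problem in the supercritical range $n \geq 5$ relevant here; in particular $W^{1,2}(\mathbb{S}^{n-1}) \not\hookrightarrow L^4(\mathbb{S}^{n-1})$ once $n - 1 \geq 4$, which is exactly what breaks the standard Simon-style translation in the natural variational norm. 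Your ``translating back'' step is thus not a routine deduction but an unresolved analytic issue, and you do not supply a substitute argument.

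The paper's actual competitor construction sidesteps this entirely. After the Lyapunov--Schmidt reduction, it splits the trace $\varphi_A$ into three spectral pieces: the kernel projection $P_K \varphi_A$, the positive-eigenvalue component $\varphi_{A,+}^\perp$, and the negative-eigenvalue component $\varphi_{A,-}^\perp$. Only the kernel piece is evolved, and via a \emph{finite-dimensional} normalized gradient flow $v(t)$ of the reduced function $f$ on $\mathbb{R}^\ell$; the positive piece is shrunk radially by an explicit linear cutoff $\eta_+$, and the negative piece is kept frozen. The competitor is then assembled directly from these three pieces (see \cref{Equation: definition of the competitor before projection}), and the final estimate is obtained by splitting the discrepancy into terms $\mathrm{I}$--$\mathrm{IV}$, with the finite-dimensional \L ojasiewicz inequality applied \emph{only} to $\mathrm{IV}$ (the finite-dimensional flow term) and the spectral gap of $L_{A_0}$ applied to $\mathrm{III}$. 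No infinite-dimensional gradient inequality, no parabolic well-posedness for a Yang--Mills heat flow on $\mathbb{S}^{n-1}$, and no ``optimization in $T$'' are ever needed. This is the key structural difference that makes the paper's proof go through in $n \geq 5$, and it is the part your proposal misses. If you want to pursue your route, the work that would need to be done is precisely the content of the open question the authors raise in their remark: proving a \L ojasiewicz--Simon inequality for Yang--Mills on $\mathbb{S}^{n-1}$ in a Sobolev (or Hölder) framework compatible with $W^{1,2}\cap L^4$ competitors on $\mathbb{B}^n$.
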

\begin{remark}
    We expect that modifying the proof of Theorem \ref{Theorem: (log)-epiperimetric inequality for Yang--Mills functional} as in \cite{SymmetricEdelenSpolaorVelichkov} to prove a symmetric log-epiperimetric inequality, one can prove uniqueness of tangent cones at infinity for sequences of connections. In particular, one would appeal to the decay-growth Theorem à la Edelen-Spolaor-Velichkov, cf. \cite[Theorem 2.2]{SymmetricEdelenSpolaorVelichkov}.
\end{remark}

\subsection{Ideas of the proofs and structure of the article} The proof of Theorem \ref{Theorem: (log)-epiperimetric inequality for Yang--Mills functional} follows the strategy outlined in \cite{EngelsteinSpolaorVelichkov, ESVduke}. In particular, it relies on a careful construction of a competitor function with energy smaller than the one of the 0-homogeneous extension of $A$. The starting point is a slicing lemma to write the energy discrepancy $\mathscr{Y}_{\mathbb{B}^n}(\cdot\,;\cdot)$ in Theorem \ref{Theorem: (log)-epiperimetric inequality for Yang--Mills functional} in a more convenient form, cf. Lemma \ref{Lemma: slicing lemma}. This is done in Section \ref{sec: slicing and LS reduction}, where we also recall the Lyapunov--Schmidt reduction adapted to our setting, cf. Lemma \ref{Lemma: Lyapunov-Schmidt reduction}. An additional difficulty that we have to overcome here is the kernel of the second variation being infinite dimensional. This is due to the gauge invariance of the Yang--Mills lagrangian. A similar issue was faced in \cite{ColdingMinicozziInvent}, where the authors have to mode out the diffeomorphism invariance of their functional $\mathcal{R}$ (they do so by the Ebin--Palais slice theorem). Similarly, Simon worked with normal graphs to avoid this issue \cite{AsymptoticSimon}, while Yang \cite{Yang} introduced a form of transverse gauge. We resolve this by simply working in a suitable relative Coulomb gauge. 

This different way of writing the energy discrepancy suggests that we can construct the competitor by flowing inwards the components of the trace $A$ in the directions that decrease the energy at second order around the critical point $A_0$ with respect to which we want to compute the energy discrepancy. To choose the appropriate directions of the flow we turn to the second variation at $A_0$. We write it as a linear elliptic operator with compact resolvent, thus implying that it has a finite dimensional kernel. Consequently, we can decompose the datum $A$ as the sum of the projections on the kernel, the positive, and the negative eigenvalues, i.e. the index. As $A_0$ is Yang--Mills on the sphere $\mathbb{S}^{n - 1}$, positive directions will increase the energy to second order, while negative directions will decrease it. Whence, we want to move $A_0$ towards zero in the former, while keeping the latter fixed. To deal with the kernel we resort to a \textit{finite dimensional Yang--Mills flow}. To make the estimate on $\mathscr{Y}_{\mathbb{B}^n}(\cdot, \cdot)$ more quantitative, we appeal to the finite dimensional \L ojasiewicz inequality, cf. Lemma \ref{lemma: finite dimensional Lojasiewicz inequality}, which is ultimately responsible for the error term appearing in \eqref{equation: main epiperimetric inequality} and is the reason why our inequality is (log)-epiperimetric instead of just epiperimetric. 
The proof of Theorem \ref{Theorem: (log)-epiperimetric inequality for Yang--Mills functional} just outlined appears Section \ref{sec: proof of epiperimetric ineq}. In the integrable case, i.e. when the projection of $A_0$ on the kernel of the second variation vanishes, the proof simplifies significantly, see Subsection \ref{subsec: integrable case}. Note that the logarithmic error term is unavoidable when considering nonintegrable singularities, and that in the setting of stationary varifolds, there is also a more restrictive notion of \textit{integrable through rotations}, see \cite[Remark 1.3]{EngelsteinSpolaorVelichkov}. Beyond its remarkable flexibility, one of the fundamental insights of this strategy is that it draws a precise relationship between the kernel of the second variation and the logarithmic decay term in the epiperimetric inequality.

\begin{remark}
    If one had a \L ojasiewicz--Simon inequality for Sobolev connections, the proof of Theorem \ref{Theorem: (log)-epiperimetric inequality for Yang--Mills functional} could be simplified by simply flowing inwards the full trace, and not just its projection onto the kernel. This is explained in \cite[Proposition 3.1]{colombo-spolaor-velichkov-3}. In particular, from a \L ojasiewicz--Simon inequality descends a log-epiperimetric one. Unfortunately, as already mentioned above, for Sobolev connections we only have the former in dimensions 2, 3, 4. It would be interesting to bridge this gap. 
\end{remark}

The proof of Theorem \ref{Theorem: uniqueness of tangent connections with isolated singularities} is inspired by work of Simon \cite{AsymptoticSimon} and it appears in Section \ref{sec: proof of uniqueness}. We exploit the log-epiperimetric inequality to establish a bound for the energy density $\Theta(\rho, y; A) - \YM_{\mathbb{B}^n}(\varphi)$ at all dyadic scales, which can then be converted to a bound at all scales. Uniqueness of the tangent map then follows by a standard Dini-type estimate that we include in Appendix \ref{sec: appendix criterion}. We deal with potential concentration phenomena in Section \ref{sec: Luckhaus}. 

\subsection*{Acknowledgements}
Both authors would like to thank Tristan Rivière, Luca Spolaor, and Song Sun for useful discussions, and Paul Feehan for helpful correspondence. We also would like to thank Jean E. Taylor for pointing out relevant references on the subject matter that we were not initially aware of. This material is based upon work supported by the National Science Foundation under Grant No. DMS-1928930, while the authors were in
residence at the Simons Laufer Mathematical Sciences Institute
(formerly MSRI) in Berkeley, California, during the Fall 2024
semester. D.P. acknowledges the support of the AMS-Simons travel grant. 

\section{Preliminaries on the Yang--Mills functional} \label{sec: preliminaries}
In this section we collect the basic definitions and properties of the Yang--Mills lagrangian. In particular, we prove that this functional is analytic. We also introduce the class of almost minimizers of the Yang--Mills energy, and define $\omega$-anti-self-dual connections. We then prove that the latter belong to the former. We prove an almost monotonicity formula resembling the one for semicalibrated currents. We conclude by explaining the phenomenon of concentration appearing in Theorem \ref{Theorem: uniqueness of tangent connections with isolated singularities}. 
\subsection{The Yang--Mills lagrangian and Yang--Mills connections}
\begin{definition}[The Yang--Mills functional]
    Let $G$ be a compact matrix Lie group with Lie algebra $\mathfrak{g}$ and let $n\ge 2$ and let $(N,h)$ be a smooth $n$-dimensional Riemannian manifold, possibly with smooth boundary $\partial N$.\\
    The \textit{Yang--Mills functional} $\operatorname{YM}_N:(W^{1,2}\cap L^4)(N,T^*N\otimes\mathfrak{g})\to[0,+\infty)$ on the trivial bundle over $N$ is given by 
    \vspace{-1mm}
    \begin{align*}
        \operatorname{YM}_N(A):=\int_{N}\lvert F_A\rvert^2\, d\vol_h \qquad\forall A\in(W^{1,2}\cap L^4)(N,T^*N\otimes\mathfrak{g}),
    \end{align*}
    \vspace{-1mm}
    where 
    \begin{align*}
        F_A:=dA+A\wedge A\in L^2(N,\wedge^2T^*N\otimes\mathfrak{g}).
    \end{align*}
    Given any open subset $U\subset N$, for every $A\in(W^{1,2}\cap L^4)(N,T^*N\otimes\mathfrak{g})$ we let
    \begin{align*}
        \operatorname{YM}_N(A\,;U):=\int_{U}\lvert F_A\rvert^2\, d\vol_h
    \end{align*}
    be the \textit{Yang--Mills energy of $A$ localized in $U$}.
\end{definition}
\begin{definition}[Yang--Mills connections]
    Let $G$ be a compact matrix Lie group with Lie algebra $\mathfrak{g}$. Let $n\ge 2$ and let $(N,h)$ be a smooth $n$-dimensional Riemannian manifold, possibly with smooth boundary $\partial N$. 
    \newline
    A \textit{Yang--Mills} connection on the trivial bundle over $N$ is a critical point of $\operatorname{YM}_N$.
\end{definition}
\begin{definition}[$\operatorname{YM}$-energy discrepancy]
    Let $G$ be a compact matrix Lie group with Lie algebra $\mathfrak{g}$. Let $n\ge 2$ and let $(N,h)$ be a smooth $n$-dimensional Riemannian manifold, possibly with smooth boundary $\partial N$. Let $A_0\in(W^{1,2}\cap L^4)(N,\wedge ^1T^*N\otimes\mathfrak{g})$. The functional $\mathscr{Y}_{N}(\,\cdot\,\,;A_0)$ given by
    \begin{align*}
        \mathscr{Y}_N(A\,;A_0):=\operatorname{YM}_N(A)-\operatorname{YM}_N(A_0)\qquad\forall\,A\in(W^{1,2}\cap L^4)(N,\wedge ^1T^*N\otimes\mathfrak{g})
    \end{align*}
    is called \textit{$\operatorname{YM}$-energy discrepancy with respect to $A_0$} on $N$.
\end{definition}
\begin{proposition} \label{proposition: analyticity}
    Let $G$ be a compact matrix Lie group with Lie algebra $\mathfrak{g}$. Let $n\ge 2$ and let $(N,h)$ be a smooth $n$-dimensional Riemannian manifold, possibly with smooth boundary $\partial N$. Given any $A_0\in(W^{1,2}\cap L^4)(N,\wedge ^1T^*N\otimes\mathfrak{g})$, the following facts hold.
    \begin{enumerate}[(i)]
        \item The functional $\operatorname{YM}_N$ is a quartic functional on $(W^{1,2}\cap L^4)(N,T^*N\otimes\mathfrak{g})$, i.e. given any $A\in (W^{1,2}\cap L^4)(N,T^*N\otimes\mathfrak{g})$ for every $k=0, 1, 2, 3, 4$ there exists a $k$-linear and bounded operator
        \begin{align*}
            \nabla^k\operatorname{YM}_N(A):(W^{1,2}\cap L^4)(N,T^*N\otimes\mathfrak{g})^k\to\R
        \end{align*}
        such that 
        \begin{align*}
            \operatorname{YM}_N(A+\varphi)=\sum_{k=0}^4\frac{\nabla^k\operatorname{YM}_N(A)}{k!}[\underbrace{\varphi,...,\varphi}_{k\ \mbox{times}}] \qquad\forall\,\varphi\in (W^{1,2}\cap L^4)(N,T^*N\otimes\mathfrak{g}).
        \end{align*}
        \item The functional $\mathscr{Y}_M(\,\cdot\,\,;A_0)$ is real-analytic on $(W^{1,2}\cap L^4)(N,T^*N\otimes\mathfrak{g})$ and its first and second Fr\'echet differentials are given by\footnote{Here and throughout, by $d_A$ we denote the \textit{exterior covariant derivative} with respect to the connection $A$, given by
        \begin{align*}
            d_A\alpha:=d\alpha+[A\wedge\alpha]=d\alpha+A\wedge\alpha+\alpha\wedge A.
        \end{align*}
        Moreover, we will denote by $d_A^*$ the formal $L^2$-adjoint operator of $d_A$.}
        \begin{align*}
            \nabla\mathscr{Y}_N(A\,;A_0)[\varphi]&=2\int_{N}\langle F_A,d_A\varphi\rangle\, d\vol_h\\
            \nabla^2\mathscr{Y}_N(A\,;A_0)[\varphi,\psi]&=\int_{N}(\langle d_A\varphi,d_A\psi\rangle+\langle F_A,[\varphi\wedge\psi]\rangle)\, d\vol_h 
        \end{align*}
        for every $\varphi,\psi\in(W^{1,2}\cap L^4)(N,T^*N\otimes\mathfrak{g})$.
        \end{enumerate}
    \begin{proof}
        Since $\mathscr{Y}_N(\,\cdot\,\,;A_0)$ is simply a shift of $\operatorname{YM}_N$ by a the constant additive factor $\operatorname{YM}_N(A_0)$, (ii) follows directly from (i). Hence, we turn to show (i). Fix any $A\in(W^{1,2}\cap L^4)(N,T^*N\otimes\mathfrak{g})$. Then, by direct computation, for every $\varphi\in W^{1,2}(N,T^*N\otimes\mathfrak{g})$ we have 
        \begin{align}\label{Equation: expansion YM near a point}
            \nonumber
            \operatorname{YM}_{N}(A+\varphi)&=\int_{N}\lvert F_{A+\varphi}\rvert^2\, d\vol_h\\
            \nonumber
            &=\int_{N}\lvert F_A+d_A\varphi+\varphi\wedge\varphi\rvert^2\, d\vol_h\\
            \nonumber
            &=\int_{N}\lvert F_A\rvert^2+2\int_{N}\langle F_A,d_A\varphi\rangle\, d\vol_h+\int_{N}\big(\lvert d_A\varphi\rvert^2+\langle F_A,[\varphi\wedge\varphi[\rangle\big)\, d\vol_h\\
            &\quad+2\int_{N}\langle d_A\varphi,\varphi\wedge\varphi\rangle\, d\vol_h+\int_{N}\lvert\varphi\wedge\varphi\rvert^2\, d\vol_h.
        \end{align}
        Let now
        \begin{align*}
            &\nabla\operatorname{YM}_{N}(A):(W^{1,2}\cap L^4)(N,T^*N\otimes\mathfrak{g})\to\R\\
            &\nabla^2\operatorname{YM}_{N}(A):(W^{1,2}\cap L^4)(N,T^*N\otimes\mathfrak{g})^2\to\R\\
            &\nabla^3\operatorname{YM}_{N}(A):(W^{1,2}\cap L^4)(N,T^*N\otimes\mathfrak{g})^3\to\R\\
            &\nabla^4\operatorname{YM}_{N}(A):(W^{1,2}\cap L^4)(N,T^*N\otimes\mathfrak{g})^4\to\R\\
        \end{align*}
        be given by
        \begin{align*}
            \nabla\operatorname{YM}_{N}(A)[\varphi]&:=2\int_{N}\langle F_A,d_A\varphi\rangle\, d\vol_h
        \end{align*}
        for every $\varphi\in (W^{1,2}\cap L^4)(N,T^*N\otimes\mathfrak{g})$,
        \begin{align*}
            \frac{\nabla^2\operatorname{YM}_{N}(A)}{2!}[\varphi_1,\varphi_2]&:=\int_{N}\big(\langle d_A\varphi_1,d_A\varphi_2\rangle+\langle F_A,[\varphi_1\wedge\varphi_2]\rangle\big)\, d\vol_h
        \end{align*}
        for every $\varphi_1,\varphi_2\in (W^{1,2}\cap L^4)(N,T^*N\otimes\mathfrak{g})$,
        \begin{align*}
            \frac{\nabla^3\operatorname{YM}_{N}(A)}{3!}[\varphi_1,\varphi_2,\varphi_3]&:=2\int_{N}\langle d_A\varphi_1,\varphi_2\wedge\varphi_3\rangle\, d\vol_h
        \end{align*}
        for every $\varphi_1,\varphi_2,\varphi_3\in (W^{1,2}\cap L^4)(N,T^*N\otimes\mathfrak{g})$ and
        \begin{align*}
            \frac{\nabla^4\operatorname{YM}_{N}(A)}{4!}[\varphi_1,\varphi_2,\varphi_3,\varphi_4]&:=\int_{N}\langle\varphi_1\wedge\varphi_2,\varphi_3\wedge\varphi_4\rangle\, d\vol_h
        \end{align*}
        for every $\varphi_1,\varphi_2,\varphi_3,\varphi_4\in (W^{1,2}\cap L^4)(N,T^*N\otimes\mathfrak{g})$. Notice that $\nabla^k\operatorname{YM}_{N}(A)$ is a $k$-linear and continous operator on $(W^{1,2}\cap L^4)(N,T^*N\otimes\mathfrak{g})$ for every $k=1,...,4$. Moreover, by plugging the definitions of the operators $\nabla^k\operatorname{YM}_{N}(A)$ in \eqref{Equation: expansion YM near a point}, we get
        \begin{align*}
            \operatorname{YM}_{N}(A+\varphi)&=\operatorname{YM}_{N}(A)+\nabla\operatorname{YM}_{N}(A)[\varphi]+\frac{\nabla^2\operatorname{YM}_{N}(A)}{2}[\varphi,\varphi]\\
            &\quad+\frac{\nabla^3\operatorname{YM}_{N}(A)}{3!}[\varphi,\varphi,\varphi]+\frac{\nabla^4\operatorname{YM}_{N}(A)}{4!}[\varphi,\varphi,\varphi,\varphi]
        \end{align*}
        for every $\varphi\in W^{1,2}(N,T^*N\otimes\mathfrak{g})$. The statement follows.
    \end{proof}
\end{proposition}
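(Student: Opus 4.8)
The plan is to exploit that the Yang--Mills energy is \emph{literally} a polynomial of degree four in the connection, so that the entire statement reduces to an explicit pointwise expansion of $\lvert F_{A+\varphi}\rvert^2$ together with elementary Hölder estimates. Write $X:=(W^{1,2}\cap L^4)(N,T^*N\otimes\mathfrak{g})$, a Banach space under $\|\cdot\|_X:=\|\cdot\|_{W^{1,2}}+\|\cdot\|_{L^4}$. Part (ii) will be a formal consequence of part (i): since $\mathscr{Y}_N(\,\cdot\,;A_0)=\operatorname{YM}_N-\operatorname{YM}_N(A_0)$ is a constant shift of $\operatorname{YM}_N$, any map which is a degree-four polynomial in the sense of (i) is automatically real-analytic on $X$ — its Taylor expansion at an arbitrary base point terminates after four terms, each of which is a bounded homogeneous polynomial, so the series trivially converges on all of $X$ — and the first two Fréchet differentials of $\mathscr{Y}_N(\,\cdot\,;A_0)$ coincide with those of $\operatorname{YM}_N$, namely the linear and the polarized quadratic homogeneous parts of the expansion, which yields the two displayed formulas. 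Thus the whole content is (i).

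\emph{Step 1 (algebraic expansion).} From $F_B=dB+B\wedge B$ and $d_A\varphi=d\varphi+[A\wedge\varphi]$ one computes, for $\varphi\in X$,
\[F_{A+\varphi}=F_A+d_A\varphi+\varphi\wedge\varphi .\]
Squaring pointwise and collecting by homogeneity degree in $\varphi$ (using $[\varphi\wedge\varphi]=2\,\varphi\wedge\varphi$) gives the five-term identity for $\operatorname{YM}_N(A+\varphi)$ in the statement, with homogeneous parts $\lvert F_A\rvert^2$, $2\langle F_A,d_A\varphi\rangle$, $\lvert d_A\varphi\rvert^2+\langle F_A,[\varphi\wedge\varphi]\rangle$, $2\langle d_A\varphi,\varphi\wedge\varphi\rangle$, $\lvert\varphi\wedge\varphi\rvert^2$ of degrees $0,1,2,3,4$.

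\emph{Step 2 (boundedness of the multilinear forms).} For $k=1,\dots,4$, let $\nabla^k\operatorname{YM}_N(A)$ be obtained by polarizing the degree-$k$ part above (these are the stated formulas), and check $k$-linearity and continuity on $X^k$. Here $L^4$ is exactly the integrability that makes every quadratic nonlinearity square-integrable: $A\wedge A\in L^2$ since $A\in L^4$, whence $F_A\in L^2$; $d_A\varphi=d\varphi+[A\wedge\varphi]\in L^2$ with $\|d_A\varphi\|_{L^2}\le\|d\varphi\|_{L^2}+C\|A\|_{L^4}\|\varphi\|_{L^4}\le C_A\|\varphi\|_X$; and the wedge of two $L^4$ forms lies in $L^2$ by Hölder. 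Hence $\lvert\nabla\operatorname{YM}_N(A)[\varphi]\rvert\le 2\|F_A\|_{L^2}\|d_A\varphi\|_{L^2}$, the quadratic form is bounded by $\|d_A\varphi_1\|_{L^2}\|d_A\varphi_2\|_{L^2}+\|F_A\|_{L^2}\|\varphi_1\|_{L^4}\|\varphi_2\|_{L^4}$, the cubic one by $\|d_A\varphi_1\|_{L^2}\|\varphi_2\|_{L^4}\|\varphi_3\|_{L^4}$, and the quartic one by $\prod_{i=1}^4\|\varphi_i\|_{L^4}$; each is thus $\le C_A\prod_i\|\varphi_i\|_X$.

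\emph{Step 3 (conclusion).} Plugging the polarized forms back into Step 1 gives $\operatorname{YM}_N(A+\varphi)=\sum_{k=0}^4\frac{1}{k!}\nabla^k\operatorname{YM}_N(A)[\varphi,\dots,\varphi]$, which is (i), and (ii) follows as above. I do not expect a genuine obstacle: the argument is elementary algebra of $\mathfrak{g}$-valued forms. The one point requiring care is the choice of function space — it is essential to work in $W^{1,2}\cap L^4$ rather than in $W^{1,2}$ alone, because in the (super)critical dimensions $n\ge 4$ of interest there is no Sobolev embedding $W^{1,2}\hookrightarrow L^4$, and it is precisely the $L^4$ control that renders $A\wedge A$, $[A\wedge\varphi]$ and $\varphi\wedge\varphi$ square-integrable and the above multilinear operators bounded; one should also record that $X$ with the sum norm is complete, so that ``real-analytic'' has its usual Banach-space meaning.
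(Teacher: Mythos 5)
Your proposal is correct and follows essentially the same route as the paper: expand $F_{A+\varphi}=F_A+d_A\varphi+\varphi\wedge\varphi$, square, read off the five homogeneous pieces as bounded $k$-linear forms on $(W^{1,2}\cap L^4)(N,T^*N\otimes\mathfrak g)$, and deduce analyticity and the stated first and second differentials from the resulting terminating Taylor expansion. The only difference is cosmetic — you spell out the Hölder estimates behind the boundedness claim and emphasize why the $L^4$ control is indispensable, whereas the paper merely asserts continuity of the $\nabla^k\operatorname{YM}_N(A)$.
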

\subsection{Almost $\operatorname{YM}$-energy minimizers and $\omega$-ASD connections}
\begin{definition}[Almost $\operatorname{YM}$-energy minimizers]\label{Definition: almost YM-energy minimizers}
    Let $G$ be a compact matrix Lie group with Lie algebra $\mathfrak{g}$. Let $n\ge 2$ and let $(N,h)$ be a smooth $n$-dimensional Riemannian manifold, possibly with smooth boundary $\partial N$. We say that $A\in (W^{1,2}\cap L^4)(N,T^*N\otimes\mathfrak{g})$ is an \textit{almost $\operatorname{YM}$-energy minimizer} if there exist $C,\alpha,\rho_0>0$ such that for every geodesic open ball $\mathscr{B}\subset N$ of radius $0<\rho<\rho_0$ such that $\overline{\mathscr{B}}\cap\partial N=\emptyset$ we have 
    \begin{align}\label{Equation: almost minimality property}
        \operatorname{YM}_N(A\,;\mathscr{B})\le\operatorname{YM}_N(\tilde A\,;\mathscr{B})+C\rho^{n-4+\alpha},
    \end{align}
    for every $\tilde A\in (W^{1,2}\cap L^4)(\mathscr{B},T^*\mathscr{B}\otimes\mathfrak{g})$ with $\iota_{\partial \mathscr{B}}^*\tilde A=\iota_{\partial\mathscr{B}}^*A$.\\
    If the previous inequality holds with $C=0$, then we say that $A$ is a \textit{$\operatorname{YM}$-energy minimizer}. 
\end{definition}
\begin{definition}[$\omega$-ASD connections]\label{Definition: omega-ASD connections}
    Let $G$ be a compact matrix Lie group with Lie algebra $\mathfrak{g}$. Let $n\ge 4$ and let $(N,h)$ be a smooth, oriented $n$-dimensional Riemannian manifold, possibly with smooth boundary $\partial N$. Let $\omega\in C^{\infty}(N,\wedge^{n-4}T^*N)$ be a smooth $(n-4)$-form on $N$ with unit comass, i.e. such that
    \begin{align*}
        \|\omega\|_{*}:=\sup\{\omega_x(e_1,...,e_{n-4}) \mbox{ : } x\in N,\,e_1,...,e_{n-4}\in T_xN \mbox{ with } \lvert e_1\wedge...\wedge e_{n-4}\rvert_h=1\}=1.
    \end{align*}
    We say that $A\in (W^{1,2}\cap L^4)(N,T^*N\otimes\mathfrak{g})$ is an \textit{$\omega$-anti-self-dual connection} (or, for short, an \textit{$\omega$-ASD connection}) if $A$ satisfies the following first order system of PDEs
    \begin{align}\label{Equation: asd equation}
        *F_A=-F_A\wedge\omega.
    \end{align}
\end{definition}
\begin{proposition}\label{Proposition: omega-ASDs are semicalibrated cycles}
    Let $G,\g$, $(N,h)$ and $\omega$ be as in Definition \ref{Definition: omega-ASD connections}. Assume that the $\g$-valued $1$-form $A\in (W^{1,2}\cap L^4)(N,T^*N\otimes\mathfrak{g})$ is an $\omega$-ASD connection. Consider the $(n-4)$-current $T_A\in\mathcal{D}_{n-4}(N)$ on $N$ given by 
    \begin{align*}
        \langle T_A,\alpha\rangle:=\int_{N}\tr(F_A\wedge F_A)\wedge\alpha \qquad\forall\alpha\in\mathcal{D}^{n-4}(N).
    \end{align*}
    Then, $T_A$ is an $(n-4)$-cycle semicalibrated by $\omega$, satisfying
    \begin{align}\label{Equation: semicalibration identity}
        \mathbb{M}(T_A\res U)=\langle T_A\res U,\omega\rangle=\operatorname{YM}_N(A\,;U)
    \end{align}
    for every $U\subset N$ open set such that $U \cap\partial N=\emptyset$, where $\mathbb{M}$ denotes the mass of the current (see \cite{SimonGMT}).  
    \begin{proof}
        First, we show that $T_A$ is a cycle. Let $\{A_i\}_{i\in\N}\subset C^{\infty}(N,T^*N\otimes\mathfrak{g})$ be such that $A_i\to A$ strongly in $(W^{1,2}\cap L^4)(N)$. This implies that 
        \begin{align*}
            \operatorname{tr}(F_{A_i}\wedge F_{A_i})\to\operatorname{tr}(F_{A}\wedge F_{A})
        \end{align*}
        strongly in $L^1(N)$. Notice that, by the Bianchi identity 
        \begin{align*}
            d_{A_i}F_{A_i}=dF_{A_i}+F_{A_i}\wedge A_i-A_i\wedge F_{A_i}=0,
        \end{align*}
        we have
        \begin{align*}
            d(\operatorname{tr}(F_{A_i}\wedge F_{A_i}))&=\tr(d(F_{A_i}\wedge F_{A_i}))=\tr(dF_{A_i}\wedge F_{A_i}+F_{A_i}\wedge dF_{A_i})\\
            &=\tr((A_i\wedge F_{A_i} - F_{A_i}\wedge A_i)\wedge F_{A_i} + F_{A_i}\wedge(A_i\wedge F_{A_i} - F_{A_i}\wedge A_i))\\
            &=\tr(A_i \wedge F_{A_i}\wedge F_{A_i} - F_{A_i}\wedge F_{A_i} \wedge A_i)\\
            &=\tr(F_{A_i}\wedge F_{A_i}\wedge A_i)-\tr(A_i\wedge F_{A_i}\wedge F_{A_i})=0 \qquad\forall\,i\in\N.
        \end{align*}
        Fix any $\alpha\in\mathcal{D}^{n-5}(N)$. By Stokes theorem, we have 
        \begin{align*}
            \int_{N}\operatorname{tr}(F_{A_i}\wedge F_{A_i})\wedge d\alpha=(-1)^{n-4}\int_{N}d(\operatorname{tr}(F_{A_i}\wedge F_{A_i}))\wedge\alpha=0 \qquad\forall\,i\in\N.
        \end{align*}
        Moreover 
        \begin{align*}
            \bigg\lvert\int_{N}\operatorname{tr}(F_{A_i}\wedge F_{A_i})\wedge d\alpha&-\int_{N}\operatorname{tr}(F_{A}\wedge F_{A})\wedge d\alpha\bigg\rvert\\
            &\le\|d\alpha\|_{L^{\infty}(N)}\|\operatorname{tr}(F_{A_i}\wedge F_{A_i})-\operatorname{tr}(F_{A}\wedge F_{A})\|_{L^1(N)}\to 0
        \end{align*}
        as $i\to+\infty$. Thus, we get that 
        \begin{align*}
            \langle\partial T_A,\alpha\rangle=\int_{N}\operatorname{tr}(F_{A}\wedge F_{A})\wedge d\alpha=0.
        \end{align*}
        By arbitrariness of $\alpha\in\mathcal{D}^{n-5}(N)$, we conclude that $\partial T_A=0$.\\
        Now we turn to show that $T_A$ is semicalibrated by $\omega$. First, given any open set $U\subset N$ such that $U\cap\partial N=\emptyset$ we notice that
        \begin{align*}
            \langle T_A\res U,\omega\rangle&=\int_{U}\operatorname{tr}(F_{A}\wedge F_{A})\wedge\omega=\int_{U}\operatorname{tr}(F_{A}\wedge F_{A}\wedge\omega)=-\int_{U}\operatorname{tr}(F_A\wedge *F_A)=\operatorname{YM}_N(A\,;U).
        \end{align*}
        Moreover, take any $\alpha\in\mathcal{D}^{n-4}(U)$ such that $\|\alpha\|_{*}\le 1$ and notice that
        \begin{align*}
            \lvert\langle T_A\res U,\alpha\rangle\rvert&\le\int_{U}\lvert\operatorname{tr}(F_{A}\wedge F_{A})\wedge\alpha\rvert\le\|\alpha\|_{L^{\infty}(N)}\int_{U}\lvert\operatorname{tr}(F_{A}\wedge F_{A})\rvert\, d\vol_h\\
            &\le\int_{U}\lvert\operatorname{tr}(F_{A}\wedge F_{A})\rvert\, d\vol_h\le\int_{U}\lvert F_A\rvert^2\, d\vol_h=\operatorname{YM}_N(A\,;U).
        \end{align*}
        Hence, we have 
        \begin{align*}
            \mathbb{M}(T_A\res U):=\sup_{\substack{\alpha\in\mathcal{D}^{n-4}(U)\\\|\alpha\|_{*}\le 1}}\lvert\langle T_A\res U,\alpha\rangle\rvert=\langle T_A\res U,\omega\rangle=\operatorname{YM}_N(A\,;U)
        \end{align*}
        and the statement follows.
    \end{proof}
\end{proposition}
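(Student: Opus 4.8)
The plan is to prove two separate things: that $T_A$ is a cycle, i.e. $\partial T_A=0$, and that it is semicalibrated by $\omega$ together with the mass identity \eqref{Equation: semicalibration identity}. I would obtain the first from the Chern--Weil closedness of $\tr(F_A\wedge F_A)$ combined with a density argument, and the second from a pointwise computation that converts the $\omega$-ASD equation \eqref{Equation: asd equation} into the Yang--Mills energy density.

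For the cycle property, I would start from the fact that for a \emph{smooth} connection $B$ the $4$-form $\tr(F_B\wedge F_B)$ is closed, which is the familiar consequence of the Bianchi identity $d_BF_B=0$ and the cyclicity of the trace on $\mathfrak{g}$-valued forms; Stokes' theorem then gives $\int_N\tr(F_B\wedge F_B)\wedge d\alpha=0$ for every $\alpha\in\mathcal{D}^{n-5}(N)$. To upgrade this to the class $W^{1,2}\cap L^4$, I would pick smooth connections $A_i\to A$ in $(W^{1,2}\cap L^4)(N)$ (available by mollification, working slightly inside $N$ when $\partial N\neq\emptyset$), so that $F_{A_i}\to F_A$ in $L^2$, using $\|A_i\wedge A_i-A\wedge A\|_{L^2}\le\|A_i-A\|_{L^4}(\|A_i\|_{L^4}+\|A\|_{L^4})$, and hence $\tr(F_{A_i}\wedge F_{A_i})\to\tr(F_A\wedge F_A)$ in $L^1(N)$. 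Pairing with the fixed bounded form $d\alpha$ and letting $i\to\infty$ yields $\langle T_A,d\alpha\rangle=0$, i.e. $\partial T_A=0$.

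For the semicalibration, fix an open $U\subset N$ with $U\cap\partial N=\emptyset$. The crucial step is to rewrite \eqref{Equation: asd equation} as $F_A\wedge\omega=-*F_A$, which gives
\begin{align*}
    \langle T_A\res U,\omega\rangle&=\int_U\tr(F_A\wedge F_A)\wedge\omega=\int_U\tr(F_A\wedge F_A\wedge\omega)\\
    &=-\int_U\tr(F_A\wedge *F_A)=\int_U\lvert F_A\rvert^2\,d\vol_h=\operatorname{YM}_N(A\,;U),
\end{align*}
the last equality being the identity $-\tr(F_A\wedge *F_A)=\lvert F_A\rvert^2\,d\vol_h$ for the $\operatorname{Ad}$-invariant inner product on $\mathfrak{g}$. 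For the matching upper bound I would use that $\|\alpha\|_*\le1$ forces the pointwise estimate $\lvert\tr(F_A\wedge F_A)\wedge\alpha\rvert\le\lvert F_A\rvert^2\,d\vol_h$, whence $\lvert\langle T_A\res U,\alpha\rangle\rvert\le\operatorname{YM}_N(A\,;U)$ for all such $\alpha$. Taking the supremum over $\|\alpha\|_*\le1$ and comparing with the display gives $\mathbb{M}(T_A\res U)=\langle T_A\res U,\omega\rangle=\operatorname{YM}_N(A\,;U)$, which is \eqref{Equation: semicalibration identity}, and in particular $T_A$ is semicalibrated by $\omega$.

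None of the steps is deep; the one point that needs care is the low regularity of $A$, which is only $W^{1,2}\cap L^4$, so that $F_A\in L^2$ and $\tr(F_A\wedge F_A)\in L^1$ only. Thus the distributional closedness of $\tr(F_A\wedge F_A)$ and the pointwise algebraic inequalities above must all be routed through the smooth approximation, which is the single place where the precise function-space hypotheses enter; everything else is an unwinding of definitions.
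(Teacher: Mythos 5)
Your proposal is correct and follows essentially the same route as the paper: smooth approximation in $W^{1,2}\cap L^4$ combined with Chern--Weil closedness and Stokes to show $\partial T_A=0$, then the pointwise rewriting of the $\omega$-ASD equation together with the comass bound to obtain the mass identity. The only cosmetic difference is that the paper spells out the chain of inequalities $\lvert\langle T_A\res U,\alpha\rangle\rvert\le\|\alpha\|_{L^\infty}\int_U\lvert\tr(F_A\wedge F_A)\rvert\le\int_U\lvert F_A\rvert^2$ step by step, whereas you state the resulting pointwise bound directly; and the pointwise algebraic identities in fact hold a.e.\ for $L^2$ curvatures without needing the smooth approximation, which is required only for the distributional closedness.
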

\begin{remark}[$\omega$-ASD connections are almost $\operatorname{YM}$-energy minimizers]\label{Remark: omega-ASD connectons are almost energy minimizers}
    By Proposition \ref{Proposition: omega-ASDs are semicalibrated cycles}, if $A$ is an $\omega$-ASD connection we immediately know that $T_A$ is an almost mass minimizing cycle in the sense of \cite[Definition 0.1]{DeLellisSpadaroSpolaor}\footnote{For a proof of this fact, see \cite[Proposition 0.4]{DeLellisSpadaroSpolaor}.}. This means that there exist $C,\alpha,\rho_0>0$ such that for every geodesic open ball $\mathscr{B}\subset N$ of radius $0<\rho<\rho_0$ and for every $S\in\mathcal{D}_{n-3}(N)$ we have
    \begin{align*}
        \mathbb{M}(T_A\res\mathscr{B})\le\mathbb{M}((T_A+\partial S)\res\mathscr{B})+C\rho^{n-4+\alpha}. 
    \end{align*}
    By \eqref{Equation: semicalibration identity} we then have 
    \begin{align*}
        \operatorname{YM}_N(A\,;\mathscr{B})\le\mathbb{M}((T_A+\partial S)\res\mathscr{B})+C\rho^{n-4+\alpha}.
    \end{align*}
    Now let $\tilde A\in (W^{1,2}\cap L^4)(\mathscr{B},T^*\mathscr{B}\otimes\mathfrak{g})$ be such that $\iota_{\partial \mathscr{B}}^*\tilde A=\iota_{\partial\mathscr{B}}^*A$. It is not hard to show that there exists $S_{\tilde A}\in\mathcal{D}_{n-3}(N)$ such that 
    \begin{align*}
        T_{\tilde A}\res\mathscr{B}=(T_{A}+\partial S_{\tilde A})\res\mathscr{B}. 
    \end{align*}
    We infer that 
    \begin{align*}
        \operatorname{YM}_N(A\,;\mathscr{B})\le\mathbb{M}(T_{\tilde A}\res\mathscr{B})+C\rho^{n-4+\alpha}.
    \end{align*}
    Exactly by the same argument that we have used in Proposition \ref{Proposition: omega-ASDs are semicalibrated cycles}, we can show that 
    \begin{align*}
        \mathbb{M}(T_{\tilde A}\res\mathscr{B})\le\operatorname{YM}_N(\tilde A\,;\mathscr{B})
    \end{align*}
    and we conclude that 
    \begin{align*}
        \operatorname{YM}_N(A\,;\mathscr{B})\le\operatorname{YM}_N(\tilde A\,;\mathscr{B})+C\rho^{n-4+\alpha}.
    \end{align*}
    Thus, we have shown that every $\omega$-ASD connection $A$ is an almost $\operatorname{YM}$-energy minimizer. 
\end{remark}
In the following we will need an almost monotonicity formula for almost $\operatorname{YM}$-energy minimizers on open subsets of $\R^n$ for $n\ge 5$. We will obtain such a formula by essentially following the argument developed in \cite[Proposition 2.1]{DeLellisSpadaroSpolaor} for almost minimizers of the area functional. Notice that an analogous monotonicity formula was obtained in \cite[Theorem 16]{wentworth-chen} for $\omega$-ASD connections. Furthermore, in the case of smooth Yang--Mills connections the same formula is essentially due to Price \cite{price-monotonicity-formula} and adapted by Tian in \cite[Theorem 2.1.2 and Remark 3]{tian-gauge-theory}.
\begin{proposition}[Almost monotonicty formula]\label{Proposition: almost monotonicity formula}
    Let $G$ be a compact matrix Lie group with Lie algebra $\mathfrak{g}$. Let $n\ge 5$ and let $\Omega\subset\R^n$ be an open set. Let $A\in (W^{1,2}\cap L^4)(\Omega,T^*\Omega\otimes\mathfrak{g})$ be an almost $\operatorname{YM}$-energy minimizer on $\Omega$. Then, there exist $C,\alpha>0$ such that for every $0<\sigma<\rho<\dist(y,\partial\Omega)$ we have
    \begin{align*}
        \frac{1}{\rho^{n-4}}\int_{B_{\rho}(y)}\lvert F_A\rvert^2\, d\mathcal{L}^n-\frac{1}{\sigma^{n-4}}\int_{B_{\sigma}(y)}\lvert F_A\rvert^2\, d\mathcal{L}^n+\rho^{\alpha}\ge C\int_{B_{\rho}(y)\smallsetminus B_{\sigma}(y)}\frac{1}{\lvert\,\cdot\,\rvert^{n-4}}\lvert F_A\mres\nu_y\rvert^2\, d\mathcal{L}^n,
    \end{align*}
    where we have defined
    \begin{align*}
        \nu_y:=\frac{\,\cdot\,-y}{\lvert\,\cdot\,-y\rvert} \qquad\mbox{ on } \R^n\smallsetminus\{y\}.
    \end{align*}
    \begin{proof}
        Fix $y \in \Omega$. Notice that for $\mathcal{L}^1$-a.e. $0<r<\dist(y,\partial\Omega)$ we have 
        \begin{align*}
            \iota_{\partial B_{r}(y)}^*A\in(W^{1,2}\cap L^4)(\partial B_{r}(y),\wedge^1\partial B_{r}(y)\otimes\mathfrak{g}).
        \end{align*}
        Hence, for $\mathcal{L}^1$-a.e. $0<r<\dist(y,\partial\Omega)$ we have 
        \begin{align*}
            A_{r}:=\bigg(r\frac{\,\cdot\,-y}{\lvert\,\cdot\,-y\rvert}\bigg)^*\iota_{\partial B_{r}(y)}^*A\in(W^{1,2}\cap L^4)(B_{r}(y),\wedge^1B_{r}(y)\otimes\mathfrak{g}).
        \end{align*}
        and $\iota_{\partial B_{r}(y)}^*A_{r}=\iota_{\partial B_{r}(y)}^*A$. Thus, by the almost minimality of $A$ (i.e. by \eqref{Equation: almost minimality property}) and by the coarea formula, we get
        \begin{align}\label{Equation: almost monotonicity 1}
            \nonumber
            \operatorname{YM}_{\Omega}(A\,;B_{r}(y))&\le\operatorname{YM}_{\Omega}(A_{r}\,;B_{r}(y))+Cr^{n-4+\alpha}\\
            \nonumber
            &=\int_{B_{r}(y)}\bigg\lvert\bigg(r\frac{x-y}{\lvert x-y\rvert}\bigg)^*\iota_{\partial B_{r}(y)}^*F_{A}\bigg\rvert^2\, d\mathcal{L}^n(x)+Cr^{n-4+\alpha}\\
            &=\int_0^{r}\int_{\partial B_{t}(y)}\frac{r^4}{t^4}\bigg\lvert \iota_{\partial B_{r}(y)}^*F_{A}\bigg(\frac{r}{t}(x-y)\bigg)\bigg\rvert^2\, d\mathscr{H}^{n-1}(x)+Cr^{n-4+\alpha}\\
            \nonumber
            &=\bigg(\frac{1}{r^{n-5}}\int_{0}^{r}t^{n-5}\, d\mathcal{L}^1(t)\bigg)\bigg(\int_{\partial B_{r}(y)}\big\lvert\iota_{\partial B_{r}(y)}^*F_{A}\big\rvert^2\, d\mathscr{H}^{n-1}\bigg)+Cr^{n-4+\alpha}\\
            \nonumber
            &=\frac{r}{n-4}\int_{\partial B_{r}(y)}\big\lvert\iota_{\partial B_{r}(y)}^* F_{A}\big\rvert^2\, d\mathscr{H}^{n-1}+Cr^{n-4+\alpha}
        \end{align}
        for $\mathcal{L}^1$-a.e. $0<r<\dist(y,\partial\Omega)$ and for some $C,\alpha>0$. Let $f:(0,\dist(y,\partial\Omega))\to[0,+\infty)$ be given by 
        \begin{align*}
            f(r):=\operatorname{YM}_{\Omega}(A\,;B_{r}(y))=\int_{B_{r}(y)}\lvert F_A\rvert^2\, d\mathcal{L}^n \qquad\forall\,r\in(0,\dist(y,\partial\Omega)).
        \end{align*}
        Since $f$ is a non-decreasing function on $(0,\dist(y,\partial\Omega))$, in particular $f$ is a function of bounded variation and its distributional derivative $Df$ is a positive measure on $(0,\dist(y,\partial\Omega))$. By the Radon--Nikodym theorem, we have
        \begin{align*}
            Df:=f'\mathcal{L}^1+\mu_s,
        \end{align*}
        where $\mu_s$ denotes the singular part of $Df$ with respect to $\mathcal{L}^1$. Multiplying both sides of \eqref{Equation: almost monotonicity 1} by $(n-4)r^{3-n}$ and then adding $Df/r^{n-4}$ to both sides of the inequality that we have obtained, we get
        \begin{align*}
           \bigg(\frac{f'(r)}{r^{n-4}}-\frac{1}{r^{n-4}}\int_{\partial B_{r}(y)}&\big\lvert \iota_{\partial B_{r}(y)}^*F_{A}\big\rvert^2\, d\mathscr{H}^{n-1}\bigg)\mathcal{L}^1\\
           &\le\frac{\mu_s}{r^{n-4}}+\bigg(\frac{f'(r)}{r^{n-4}}-\frac{1}{r^{n-4}}\int_{\partial B_{r}(y)}\big\lvert \iota_{\partial B_{r}(y)}^*F_{A}\big\rvert^2\, d\mathscr{H}^{n-1}\bigg)\mathcal{L}^1\\
            &\le \frac{Df}{r^{n-4}}-(n-4)\frac{f(r)}{r^{n-3}}\mathcal{L}^1+\hat Cr^{\alpha-1}\mathcal{L}^1\\
            &=D\bigg(\frac{f(r)}{r^{n-4}}\bigg)+\hat Cr^{\alpha-1}\mathcal{L}^1
        \end{align*}
        where the equality is intended in the sense of distributions on $(0,\dist(y,\partial\Omega))$ and we have let $\hat C:=C(n-4)$. Now, fix any $0<\sigma<\rho<\dist(y,\partial\Omega)$. Integrating the previous inequality on the interval $[\sigma,\rho)$ we get
        \begin{align}\label{Equation: almost monotonicity 2}
            \int_{\sigma}^{\rho}\frac{1}{r^{n-4}}\bigg(f'(r)-\int_{\partial B_{r}(y)}\big\lvert \iota_{\partial B_{r}(y)}^*F_{A}\big\rvert^2\, d\mathscr{H}^{n-1}\bigg)\, d\mathcal{L}^1(r)\le\tilde C\bigg(\frac{f(\rho)}{\rho^{n-4}}-\frac{f(\sigma)}{\sigma^{n-4}}+\rho^{\alpha}\bigg)
        \end{align}
        with $\tilde C:=\max\{1,\hat C\}$. Notice that, by the coarea formula, we have
        \begin{align}\label{Equation: almost monotonicity 3}
        \begin{split}
            \int_{\sigma}^{\rho}\frac{1}{r^{n-4}}&\bigg(f'(r)-\int_{\partial B_{r}(y)}\big\lvert \iota_{\partial B_{r}(y)}^*F_{A}\big\rvert^2\, d\mathscr{H}^{n-1}\bigg)\, d\mathcal{L}^1(r)\\
            &=\int_{\sigma}^{\rho}\frac{1}{r^{n-4}}\bigg(\int_{\partial B_{r}(y)}\lvert F_A\rvert^2\, d\mathscr{H}^{n-1}-\int_{\partial B_{r}(y)}\big\lvert \iota_{\partial B_{r}(y)}^*F_{A}\big\rvert^2\, d\mathscr{H}^{n-1}\bigg)\, d\mathcal{L}^1(r)\\
            &=\int_{\sigma}^{\rho}\frac{1}{r^{n-4}}\bigg(\int_{\partial B_{r}(y)}\big(\lvert F_A\rvert^2-\lvert \iota_{\partial B_{r}(y)}^*F_{A}\rvert^2\big)\, d\mathscr{H}^{n-1}\bigg)\, d\mathcal{L}^1(r)\\
            &=\int_{\sigma}^{\rho}\frac{1}{r^{n-4}}\bigg(\int_{\partial B_{r}(y)}\lvert F_A\res\nu_y\rvert^2\, d\mathscr{H}^{n-1}\bigg)\, d\mathcal{L}^1(r)\\
            &=\int_{B_{\rho}(y)\smallsetminus B_{\sigma}(y)}\frac{1}{\lvert\,\cdot\,\rvert^{n-4}}\lvert F_A\res\nu_y\rvert^2\, d\mathcal{L}^n.
        \end{split}
        \end{align}
        By \eqref{Equation: almost monotonicity 2} and \eqref{Equation: almost monotonicity 3}, we infer that
        \begin{align*}
            \int_{B_{\rho}(y)\smallsetminus B_{\sigma}(y)}\frac{1}{\lvert\,\cdot\,\rvert^{n-4}}\lvert F_A\res\nu_y\rvert^2\, d\mathcal{L}^n\le\tilde C\bigg(\frac{f(\rho)}{\rho^{n-4}}-\frac{f(\sigma)}{\sigma^{n-4}}+\rho^{\alpha}\bigg)
        \end{align*}
        for every $0<\sigma<\rho<\dist(y,\partial\Omega)$. The statement follows.
    \end{proof}
\end{proposition}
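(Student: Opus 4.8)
The plan is to run the standard cone-comparison argument for almost minimizers, following the strategy used for almost area minimizers in \cite[Proposition 2.1]{DeLellisSpadaroSpolaor}: the only inputs are the almost-minimality inequality \eqref{Equation: almost minimality property} and the conformal scaling of the Yang--Mills curvature under dilations.

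Fix $y\in\Omega$ and set $f(r):=\operatorname{YM}_\Omega(A\,;B_r(y))$. By a slicing (Fubini) argument, for $\mathcal{L}^1$-a.e. $r\in(0,\dist(y,\partial\Omega))$ the trace $\iota_{\partial B_r(y)}^*A$ belongs to $(W^{1,2}\cap L^4)(\partial B_r(y),\wedge^1\partial B_r(y)\otimes\mathfrak{g})$, so that its $0$-homogeneous extension $A_r:=\big(r\,\nu_y\big)^*\iota_{\partial B_r(y)}^*A$ lies in $(W^{1,2}\cap L^4)(B_r(y),\wedge^1 B_r(y)\otimes\mathfrak{g})$, agrees with $A$ on $\partial B_r(y)$, and hence is an admissible competitor in Definition~\ref{Definition: almost YM-energy minimizers}. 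Since curvature is natural under pullback of connections, $F_{A_r}$ is the pullback of $\iota_{\partial B_r(y)}^*F_A$ by the radial retraction onto $\partial B_r(y)$, hence $0$-homogeneous with vanishing $\nu_y$-contraction; using that a $2$-form picks up the square of the dilation factor and writing the Lebesgue integral in polar coordinates about $y$ one obtains
\begin{align*}
\operatorname{YM}_\Omega(A_r\,;B_r(y))=\frac{r}{n-4}\int_{\partial B_r(y)}\big\lvert\iota_{\partial B_r(y)}^*F_A\big\rvert^2\,d\mathscr{H}^{n-1}.
\end{align*}
Plugging this into \eqref{Equation: almost minimality property} yields, for a.e.\ $r$, the key inequality $f(r)\le\frac{r}{n-4}\int_{\partial B_r(y)}\lvert\iota_{\partial B_r(y)}^*F_A\rvert^2\,d\mathscr{H}^{n-1}+Cr^{n-4+\alpha}$.

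Next I would convert this into a differential inequality for $r\mapsto r^{4-n}f(r)$. By the coarea formula $f$ is absolutely continuous with $f'(r)=\int_{\partial B_r(y)}\lvert F_A\rvert^2\,d\mathscr{H}^{n-1}$ for a.e.\ $r$, and on each sphere $\lvert F_A\rvert^2=\lvert\iota_{\partial B_r(y)}^*F_A\rvert^2+\lvert F_A\mres\nu_y\rvert^2$ pointwise. Substituting the key inequality and multiplying by $r^{4-n}$ gives, in the sense of distributions on $(0,\dist(y,\partial\Omega))$,
\begin{align*}
\frac{d}{dr}\!\left(\frac{f(r)}{r^{n-4}}\right)+\hat C\,r^{\alpha-1}\ \ge\ \frac{1}{r^{n-4}}\int_{\partial B_r(y)}\lvert F_A\mres\nu_y\rvert^2\,d\mathscr{H}^{n-1},\qquad\hat C:=C(n-4),
\end{align*}
where, if one only regards $f$ as a function of bounded variation, the non-negative singular part of $Df$ contributes favourably. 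Integrating over $[\sigma,\rho)$ and using the coarea formula once more to rewrite the right-hand side as a weighted integral over the annulus $B_\rho(y)\smallsetminus B_\sigma(y)$ produces, after absorbing dimensional constants into $C$ and $\alpha$, exactly the stated inequality.

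The coarea manipulations and the distributional integration are routine; the two points that require care are the slicing lemma making the conical competitor admissible for a.e.\ radius (one needs $\iota_{\partial B_r(y)}^*A$ regular enough that $F_{A_r}\in L^2$, so that $A_r$ is genuinely admissible in \eqref{Equation: almost minimality property}) and the precise bookkeeping of the conformal scaling in the competitor-energy identity. I do not expect any deeper analytic obstacle: the whole content is the cone comparison together with the tangential/radial splitting of the curvature, in complete analogy with the monotonicity formula for semicalibrated $(n-4)$-cycles (cf.\ Proposition~\ref{Proposition: omega-ASDs are semicalibrated cycles}).
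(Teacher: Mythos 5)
Your proposal is correct and reproduces the paper's argument essentially verbatim: the same conical competitor $A_r$ built from the sliced trace, the same cone-energy identity via polar coordinates, the same substitution of $f'(r)=\int_{\partial B_r(y)}|F_A|^2\,d\mathscr H^{n-1}$ together with the tangential/radial splitting of $|F_A|^2$, and the same integration of the resulting differential inequality for $r^{4-n}f(r)$. The only cosmetic difference is that you invoke absolute continuity of $f$ (which indeed holds since $|F_A|^2\in L^1_{\loc}$), whereas the paper works with the more cautious BV decomposition $Df=f'\mathcal L^1+\mu_s$ and discards the favourable singular part; either bookkeeping is fine.
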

\subsection{Concentration Set} We now wish to explain the concentration phenomenon appearing in Theorem \ref{Theorem: uniqueness of tangent connections with isolated singularities}, and for which we have to devise a Luckhaus-type analysis, cf. Section \ref{sec: Luckhaus}. Let $\{A_i\}_{i \in \mathbb{N}}$ be a sequence of smooth Yang--Mills connections on $\Omega$ with $\YM(A_i) \leq \Lambda < + \infty$. Then, by \cite[Proposition 3.1.2]{tian-gauge-theory} there exists a subsequence $\{A_{i_j}\}$ converging weakly\footnote{Here and throughout, we will interpret weak convergence of connections in the sense of \cite[Section 3.1]{tian-gauge-theory}.} to an admissible Yang--Mills connection $A$. To this sequence $\{A_i\}_{i \in \mathbb{N}}$ we can associate the following \textit{concentration set}: 
\begin{equation*}
    \Sigma = \bigcap_{r > 0} \left\{x \in \Omega; \; \liminf_{i \rightarrow \infty} r^{4 - n} \int_{B_r(x)} \vert F_{A_i} \vert^2 \geq \varepsilon_0 \right\}, 
\end{equation*}
where $\varepsilon_0$ is given by \cite[Theorem 2.2.1]{tian-gauge-theory}. In particular, using measure-theoretic arguments one can then prove the bound $\mathcal{H}^{n - 4}(\Sigma) \leq C(\Lambda, \varepsilon_0)$. Consider then the Radon measures $\mu_i = \vert F_{A_i} \vert^2 d\mathcal{L}^n$. By taking a subsequence if necessary, we may assume $\mu_i \rightarrow \mu$ weakly-* as Radon measures on $\Omega$. Fatou's lemma allows us to write 
\begin{equation*}
    \mu = \vert F_A \vert^2 d\mathcal{L}^n + \nu, 
\end{equation*}
for some nonnegative Radon measure $\nu$ on $\Omega$. In other words, $\nu$ measures the defect of strong convergence of the curvatures. We can then write the concentration set as $\Sigma = \spt \nu \cup \Sing(A)$, where $\Sing(A)$ is the singular set of $A$, i.e. the set of points at which $A$ is not regular. Finally, we have that $\nu \equiv 0$ if and only if $\mathcal{H}^{n - 4}(\Sigma) = 0$ if and only if the curvatures converge strongly in $L^2$. Note that some of this analysis goes through when relaxing the regularity of the connections $A_i$. The set $\Sigma \setminus \Sing(A)$ is usually referred to as the \textit{blow-up locus}. We refer the reader to \cite{lin-gradient-estimates} for a similar analysis in the setting of harmonic maps. 

In the case in which all the elements of the sequence $\{A_i\}_{i \in \mathbb{N}}$ are Hermitian Yang--Mills connections on $B_1(0) \subset \mathbb{C}^n$ endowed with a K\"ahler form $\omega$, with an isolated singularity, the concentration set $\Sigma$ is a complex analytic subvariety of $\mathbb{C}_{*}^{n}$, and the blow-up locus consists precisely of the closure of the codimension two part of $\Sigma.$ See \cite{ChenSunIMRN, ChenSunDuke, ChenSunGT, ChenSunInvent} for further structural results on the concentration set, and blow-up locus, of Hermitian Yang--Mills connections. 

\section{The slicing lemma and the Lyapunov--Schmidt reduction} \label{sec: slicing and LS reduction}
The aim of this section is to set the stage for the proof of Theorem \ref{Theorem: (log)-epiperimetric inequality for Yang--Mills functional} by proving the crucial slicing lemma, and recalling the classical Lyapunov-Schmidt reduction and adapting it to our setting. 
\begin{definition}
    Let $G$ be a compact matrix Lie group with Lie algebra $\mathfrak{g}$ and let $n\ge 5$. We say that $A\in (W^{1,2}\cap L^4)(\mathbb{B}^n,T^*\mathbb{B}^{n}\otimes\mathfrak{g})$ is \textit{conical} if 
    \begin{align*}
        \iota_{\mathbb{S}^{n - 1}}^*A\in(W^{1,2}\cap L^4)(\mathbb{S}^{n-1},T^*\mathbb{S}^{n-1}\otimes\mathfrak{g})
    \end{align*}
    and
    \begin{align*}
        A=\bigg(\frac{\cdot}{\lvert\,\cdot\,\rvert}\bigg)^*\iota_{\mathbb{S}^{n - 1}}^*A.
    \end{align*}
\end{definition}
\allowdisplaybreaks
\begin{lemma}[Slicing lemma]\label{Lemma: slicing lemma}
    Let $G$ be a compact matrix Lie group with Lie algebra $\mathfrak{g}$ and let $n\ge 5$. Let $A_0\in (W^{1,2}\cap L^4)(\mathbb{S}^{n-1},T^*\mathbb{S}^{n-1}\otimes\mathfrak{g})$ and let $\tilde A_0\in (W^{1,2}\cap L^4)(\mathbb{B}^n,T^*\mathbb{B}^n\otimes\mathfrak{g})$ be the 0-homogeneous extension of $A_0$ inside $\mathbb{B}^n$, i.e.
    \begin{align*}
        \tilde A_0:=\bigg(\frac{\cdot}{\lvert\,\cdot\,\rvert}\bigg)^*A_0.
    \end{align*}
    Then, for every $A\in(W^{1,2}\cap L^4)(\mathbb{B}^n,T^*\mathbb{B}^n\otimes\mathfrak{g})$ we have
    \begin{align*}
        \mathscr{Y}_{\mathbb{B}^n}(A\,;\tilde A_0)\le\int_0^1\mathscr{Y}_{\mathbb{S}^{n-1}}(\Psi_{\rho}^*A\,;A_0)\rho^{n-5}\,d\mathcal{L}^1(\rho)+\int_0^1\int_{\mathbb{S}^{n-1}}\lvert\Psi_{\rho}^*(F_A\res\nu_0)\rvert^2\,d\mathscr{H}^{n-1}\rho^{n-3}\,d\mathcal{L}^1(\rho),
    \end{align*}
    where for every $\rho>0$ the map $\Psi_{\rho}:\mathbb{S}^{n-1}\to\partial B_{\rho}(0)$ is the smooth conformal diffeomorphism given by
    \begin{align*}
        \Psi_{\rho}(x):=\rho x \qquad\forall\,x\in\mathbb{S}^{n-1}.
    \end{align*}
    Moreover, in case $A$ is conical the above simplifies to 
    \begin{equation*}
        \mathscr{Y}_{\mathbb{B}^n}(A\,;\tilde A_0)=\frac{1}{n - 4}\mathscr{Y}_{\mathbb{S}^{n-1}}(\iota_{\mathbb{S}^{n-1}}^*A\,;A_0). 
    \end{equation*}
    \begin{proof}
        Notice that, for $\mathcal{L}^1$-a.e. $\rho\in(0,1)$ we have that $F_A\in L^2(\partial B_{\rho}(0))$ with 
        \begin{align}\label{Equation: slicing 1}
            \lvert F_A\rvert^2=\lvert\iota_{\partial B_{\rho}(0)}^*F_A\rvert^2+\lvert F_A\res\nu_0\rvert^2 \qquad\mbox{ on } \,\partial B_{\rho}(0)
        \end{align}
        and
        \begin{align}\label{Equation: slicing 2}
            \lvert F_{\tilde A_0}\rvert^2&=\lvert\iota_{\partial B_{\rho}(0)}^*F_{\tilde A_0}\rvert^2+\lvert F_{\tilde A_0}\res\nu_0\rvert^2 \qquad\mbox{ on } \,\partial B_{\rho}(0)
        \end{align}
        where equalities are meant in the sense of $L^1$-functions on $\partial B_{\rho}(0)$. Notice that 
        \begin{align*}
            \iota_{\partial B_{\rho}(0)}^*F_{\tilde A_0}&=\frac{1}{\rho^2}\iota_{\mathbb{S}^{n-1}}^*F_{\tilde A_0}\bigg(\frac{\cdot}{\rho}\bigg)=\frac{1}{\rho^2}F_{A_0}\bigg(\frac{\cdot}{\rho}\bigg)\\
            F_{\tilde A_0}\res\nu_0&\equiv 0,
        \end{align*}
        so that \eqref{Equation: slicing 2} becomes
        \begin{align}\label{Equation: slicing 3}
            \lvert F_{\tilde A_0}\rvert^2&=\frac{1}{\rho^4}\bigg\lvert F_{A_0}\bigg(\frac{\cdot}{\rho}\bigg)\bigg\rvert^2 \qquad\mbox{ on } \,\partial B_{\rho}(0)
        \end{align}
        for $\mathcal{L}^1$-a.e. $\rho\in(0,1)$. By \eqref{Equation: slicing 1}, \eqref{Equation: slicing 3} and by the coarea formula, we have
        \begin{align*}
            \mathscr{Y}_{\mathbb{B}^n}(A\,;\tilde A_0)&=\operatorname{YM}_{\mathbb{B}^n}(A)-\operatorname{YM}_{\mathbb{B}^n}(\tilde A_0)=\int_{\mathbb{B}^n}\lvert F_A\rvert^2\, d\mathcal{L}^n-\int_{\mathbb{B}^n}\lvert F_{\tilde A_0}\rvert^2\, d\mathcal{L}^n\\
            &=\int_0^1\int_{\partial B_{\rho}(0)}\big(\lvert F_A\rvert^2-\lvert F_{\tilde A_0}\rvert^2\big)\, d\mathscr{H}^{n-1}\, d\mathcal{L}^1(\rho)\\
            &=\int_0^1\int_{\partial B_{\rho}(0)}\bigg(\lvert\iota_{\partial B_{\rho}(0)}^*F_A\rvert^2-\frac{1}{\rho^4}\bigg\lvert F_{A_0}\bigg(\frac{\cdot}{\rho}\bigg)\bigg\rvert^2\bigg)\, d\mathscr{H}^{n-1}\, d\mathcal{L}^1(\rho)\\
            &\quad+\int_0^1\int_{\partial B_{\rho}(0)}\lvert F_A\res\nu_0\rvert^2\, d\mathscr{H}^{n-1}\, d\mathcal{L}^1(\rho)\\
            &=\int_0^1\rho^{n-5}\int_{\mathbb{S}^{n-1}}\big(\lvert\rho^2\iota_{\partial B_{\rho}(0)}^*F_A(\rho\,\cdot\,)\rvert^2-\lvert F_{A_0}\rvert^2\big)\, d\mathscr{H}^{n-1}\, d\mathcal{L}^1(\rho)\\
            &\quad+\int_0^1\rho^{n-3}\int_{\mathbb{S}^{n-1}}\lvert\rho F_A\res\nu_0(\rho\,\cdot\,)\rvert^2\, d\mathscr{H}^{n-1}\, d\mathcal{L}^1(\rho)\\
            &=\int_0^1\rho^{n-5}\int_{\mathbb{S}^{n-1}}\big(\lvert\Psi_{\rho}^*F_A\rvert^2-\lvert F_{A_0}\rvert^2\big)\, d\mathscr{H}^{n-1}\, d\mathcal{L}^1(\rho)\\
            &\quad+\int_0^1\rho^{n-3}\int_{\mathbb{S}^{n-1}}\lvert\Psi_{\rho}^*(F_A\res\nu_0)\rvert^2\, d\mathscr{H}^{n-1}\, d\mathcal{L}^1(\rho)\\
            &=\int_0^1\mathscr{Y}_{\mathbb{S}^{n-1}}(\Psi_{\rho}^*A\,;A_0)\rho^{n-5}\, d\mathcal{L}^1(\rho)+\int_0^1\int_{\mathbb{S}^{n-1}}\lvert\Psi_{\rho}^*(F_A\res\nu_0)\rvert^2\, d\mathscr{H}^{n-1}\rho^{n-3}\, d\mathcal{L}^1(\rho).
        \end{align*}
        The statement follows. 
    \end{proof}
\end{lemma}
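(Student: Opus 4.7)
My plan is to prove this by slicing the unit ball into spherical shells via the coarea formula, then analyzing the curvature of both $A$ and $\tilde A_0$ on each slice separately.

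First, I would use the coarea formula to write
\begin{equation*}
  \mathscr{Y}_{\mathbb{B}^n}(A\,;\tilde A_0) = \int_0^1 \int_{\partial B_\rho(0)} \bigl(|F_A|^2 - |F_{\tilde A_0}|^2\bigr)\, d\mathscr{H}^{n-1}\, d\mathcal{L}^1(\rho).
\end{equation*}
Next, on each sphere $\partial B_\rho(0)$ I would perform the orthogonal decomposition of a $2$-form into its tangential (pullback) and radial (contraction with $\nu_0$) components, yielding the pointwise identity $|F_A|^2 = |\iota_{\partial B_\rho(0)}^* F_A|^2 + |F_A \res \nu_0|^2$ for $\mathcal{L}^1$-a.e.\ $\rho \in (0,1)$, and similarly for $F_{\tilde A_0}$. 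The key observation about $\tilde A_0$ is that its $0$-homogeneous structure (it is the pullback under $x \mapsto x/|x|$ of a form on $\mathbb{S}^{n-1}$) implies $\tilde A_0 \res \nu_0 \equiv 0$, hence $F_{\tilde A_0}\res\nu_0 \equiv 0$ as well; this kills the radial term for the reference connection.

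The next step is to change variables on each sphere via the conformal diffeomorphism $\Psi_\rho : \mathbb{S}^{n-1} \to \partial B_\rho(0)$, $\Psi_\rho(x) = \rho x$. The scaling computations I would keep track of are: the volume form pulls back with factor $\rho^{n-1}$; since $\Psi_\rho^* = \rho \cdot (\,\cdot\,)\circ\Psi_\rho$ on $1$-forms, $\Psi_\rho^* F_A$ carries a factor $\rho^2$ relative to $F_A \circ \Psi_\rho$, and the radial contraction $F_A \res \nu_0$ carries a factor $\rho$. This yields $\int_{\partial B_\rho(0)} |\iota^*F_A|^2\, d\mathscr{H}^{n-1} = \rho^{n-5}\int_{\mathbb{S}^{n-1}} |\Psi_\rho^* F_A|^2\, d\mathscr{H}^{n-1}$ and $\int_{\partial B_\rho(0)} |F_A\res\nu_0|^2\, d\mathscr{H}^{n-1} = \rho^{n-3}\int_{\mathbb{S}^{n-1}} |\Psi_\rho^*(F_A\res\nu_0)|^2\, d\mathscr{H}^{n-1}$. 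Since $\iota_{\mathbb{S}^{n-1}}^*\tilde A_0 = A_0$ and $\Psi_\rho^*\iota^*_{\partial B_\rho(0)}\tilde A_0 = A_0$, the reference term in the first integrand becomes precisely $|F_{A_0}|^2$, and collecting terms gives exactly
\begin{equation*}
  \int_0^1 \mathscr{Y}_{\mathbb{S}^{n-1}}(\Psi_\rho^* A\,;A_0)\,\rho^{n-5}\, d\mathcal{L}^1(\rho) + \int_0^1 \int_{\mathbb{S}^{n-1}} |\Psi_\rho^*(F_A\res\nu_0)|^2\, d\mathscr{H}^{n-1}\, \rho^{n-3}\, d\mathcal{L}^1(\rho),
\end{equation*}
in fact with equality.

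The conical case then follows immediately: if $A$ is conical, then $A\res\nu_0 \equiv 0$ and hence the second integral vanishes; moreover $\Psi_\rho^* A$ is independent of $\rho$ and equals $\iota^*_{\mathbb{S}^{n-1}} A$, so the first integral factorises as $\bigl(\int_0^1 \rho^{n-5}\, d\mathcal{L}^1(\rho)\bigr)\,\mathscr{Y}_{\mathbb{S}^{n-1}}(\iota^*_{\mathbb{S}^{n-1}} A\,;A_0) = \tfrac{1}{n-4}\,\mathscr{Y}_{\mathbb{S}^{n-1}}(\iota^*_{\mathbb{S}^{n-1}}A\,;A_0)$. The main mild obstacle is bookkeeping of the conformal-scaling factors and of the Sobolev regularity of the traces $\iota^*_{\partial B_\rho(0)} A$ and $A\res\nu_0$, which exist in $W^{1,2}\cap L^4$ for $\mathcal{L}^1$-a.e.\ $\rho$ by a routine slicing argument combined with the $L^2$ integrability of $F_A$; no nonlinear structure of the Yang--Mills equation is used, making the argument purely geometric.
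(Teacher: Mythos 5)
Your proposal is correct and follows essentially the same route as the paper's own proof: coarea slicing, orthogonal decomposition of the curvature into tangential and radial components on each sphere, the vanishing of $F_{\tilde A_0}\res\nu_0$ from conicity, the conformal change of variables via $\Psi_\rho$ with the factors $\rho^{n-5}$ and $\rho^{n-3}$, and finally the factorization $\int_0^1\rho^{n-5}\,d\rho=\tfrac1{n-4}$ in the conical case. You also correctly observe that the argument in fact yields equality (the paper's proof does too, with the ``$\le$'' in the statement being a slight understatement).
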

\begin{remark}\label{Remark: ellipticity of the second variation of the YM-energy discrepancy}
Let $n\ge 5$ and assume that $A_0\in C^{\infty}(\s^{n-1}\,;T^*\s^{n-1}\otimes\g)$. In order to prove Theorem \ref{Theorem: (log)-epiperimetric inequality for Yang--Mills functional} we will need a Lyapunov--Schmidt reduction for the energy discrepancy on the sphere $\s^{n-1}$ around its smooth critical points. Nevertheless, there is a clear obstruction to this end that we need to face. Indeed, the second variation $\nabla^2\mathscr{Y}_{\mathbb{S}^{n-1}}(\,\cdot\,;A_0)$ of $\mathscr{Y}_{\mathbb{S}^{n-1}}(\,\cdot\,;A_0)$ has an infinite dimensional kernel, due to the gauge invariance of $\operatorname{YM}_{\mathbb{S}^{n-1}}$. To address this problem, fix any smooth Yang--Mills connection $A\in C^{\infty}(\s^{n-1}\,;T^*\s^{n-1}\otimes\g)$ and notice that $A$ is a smooth critical point of $\mathscr{Y}_{\mathbb{S}^{n-1}}(\,\cdot\,;A_0)$ as well. Recall from Proposition \ref{proposition: analyticity} that
\begin{align*}
    \nabla^2\mathscr{Y}_{\mathbb{S}^{n-1}}(A\,;A_0)[\varphi,\psi]&=\int_{\mathbb{S}^{n-1}}(\langle d_A\varphi,d_A\psi\rangle+\langle F_A,[\varphi\wedge\psi]\rangle)\, d\mathscr{H}^{n-1}, 
\end{align*}
for every $\varphi,\psi\in C^{2,\alpha}(\s^{n-1},T^*\s^{n-1}\otimes\mathfrak{g})$. For every $\varphi,\psi\in W^{1,2}(M,T^*M\otimes\mathfrak{g}_P)$ we can rewrite the previous expression as 
\begin{align*}
    \nabla^2\mathscr{Y}_{\s^{n-1}}(A\,;A_0)[\varphi,\psi]&=2\int_{\s^{n-1}}\langle L_A\varphi,\psi\rangle\, d\mathscr{H}^{n-1},
\end{align*}
where $L_A:C^{2,\alpha}(\s^{n-1},T^*\s^{n-1}\otimes\mathfrak{g})\to C^{0,\alpha}(\s^{n-1},T^*\s^{n-1}\otimes\mathfrak{g})$ is given by
\begin{align*}
    L_A\varphi:=d_A^*d_A\varphi+*[*F_A\wedge\varphi] \qquad\forall\,\varphi\in C^{2,\alpha}(\s^{n-1},T^*\s^{n-1}\otimes\mathfrak{g}).
\end{align*}
Fix any smooth reference connection $\tilde A\in C^{\infty}(\mathbb{S}^{n-1},T^*\mathbb{S}^{n-1}\otimes\mathfrak{g})$ on $\s^{n-1}$ and notice that
\begin{align}\label{Equation: linear operator inducing second variation}
\begin{split}
    L_A\varphi&=d_A^*(d\varphi +[A\wedge\varphi])+*[*F_A\wedge\varphi]\\
    &=d_A^*(d_{\tilde A}\varphi+[(A-\tilde A)\wedge\varphi])+*[*F_A\wedge\varphi]\\
    &=d_A^*d_{\tilde A}\varphi+d_A^*([(A-\tilde A)\wedge\varphi])+*[*F_A\wedge\varphi]\\
    &=d_{\tilde A}^*d_{\tilde A}\varphi-(-1)^{n-2}[(A-\tilde A)\wedge d_{\tilde A}\varphi]+d_A^*([(A-\tilde A)\wedge\varphi])+*[*F_A\wedge\varphi]\\
    &=d_{\tilde A}^*d_{\tilde A}\varphi+T_A\varphi,
\end{split}
\end{align}
where $T_A:C^{2,\alpha}(\s^{n-1},T^*\s^{n-1}\otimes\mathfrak{g})\to C^{0,\alpha}(\s^{n-1},T^*\s^{n-1}\otimes\mathfrak{g})$ is the bounded linear operator given by
\begin{align*}
    T_A\varphi:=-(-1)^{n-2}[(A-\tilde A)\wedge d_{\tilde A}\varphi]+d_A^*([(A-\tilde A)\wedge\varphi])+*[*F_A\wedge\varphi]
\end{align*}
for every $\varphi\in C^{2,\alpha}(\s^{n-1},T^*\s^{n-1}\otimes\mathfrak{g})$. Hence, the leading term of $L_A$ is given by $d_{\tilde A}^*d_{\tilde A}$ which is not an elliptic operator. To fix this issue, we need to eliminate the gauge invariance of the Yang--Mills lagrangian in the following way. Let
\begin{align*}
    X:=\{A\in C^{2,\alpha}(\s^{n-1},T^*\s^{n-1}) \mbox{ s.t. } d_{\tilde A}^*A=0\}.
\end{align*}
Assume that $A\in X$ and notice that $A$ is also a critical point of $\mathscr{Y}_{\s^{n-1}}(\,\cdot\,;A_0)\res X$ and that the second variation $\nabla_X^2\mathscr{Y}_{\s^{n-1}}(A\cdot\,;A_0)$ at $A$ of $\mathscr{Y}_{\s^{n-1}}(\,\cdot\,;A_0)\res X$ is the second order liner \underline{elliptic} differential operator on $\s^{n-1}$ given by 
\begin{align*}
    \tilde L_A\varphi:=\Delta_{\tilde A}\varphi+T_A\varphi
\end{align*}
for every $\varphi\in C^{2,\alpha}(\s^{n-1},T^*\s^{n-1}\otimes\mathfrak{g})$. Exploiting this fact, in what follows, we will always consider restrictions of the energy discrepancy to suitable subspaces over which its second variation becomes elliptic. 
\end{remark}
\begin{lemma}[Lyapunov--Schmidt reduction for the Yang--Mills functional]\label{Lemma: Lyapunov-Schmidt reduction}
Let $G$ be a compact matrix Lie group with Lie algebra $\mathfrak{g}$ and let $n\ge 5$. Let $\tilde A\in C^{\infty}(\mathbb{S}^{n-1},T^*\mathbb{S}^{n-1}\otimes\mathfrak{g})$ be any smooth reference connection on $\s^{n-1}$ and let
\begin{align*}
    X:=\{A\in C^{2,\alpha}(\s^{n-1},T^*\s^{n-1}) \mbox{ s.t. } d_{\tilde A}^*A=0\}.
\end{align*}
Let $A_0\in C^{\infty}(\mathbb{S}^{n-1},T^*\mathbb{S}^{n-1}\otimes\mathfrak{g})$ be a smooth Yang--Mills connection such that $A_0\in X$. We know that 
\begin{align*}
    K:=\ker\nabla_X^2\mathscr{Y}_{\mathbb{S}^{n-1}}(A_0^g\,;A_0)
\end{align*}
is a finite-dimensional linear subspace of $C^{\infty}(\mathbb{S}^{n-1},T^*\mathbb{S}^{n-1}\otimes\mathfrak{g})$\footnote{See Remark \ref{Remark: ellipticity of the second variation of the YM-energy discrepancy}.}. Let $K^{\perp}$ be the orthogonal complement of $K$ inside $L^2(\mathbb{S}^{n-1},T^*\mathbb{S}^{n-1}\otimes\mathfrak{g})$. Denote by $P_K$ and $P_{K}^\perp$ the $L^2$-orthogonal linear projection operators on the subspaces $K$ and $K^\perp$ respectively. Then, there exists an open neighborhood $U \subset K$ of $0$, and an analytic function $\Upsilon \colon K \rightarrow K^\perp$ such that the following facts hold. 
\begin{enumerate}[(i)]
    \item $\Upsilon(0) = 0$, and $\nabla \Upsilon(0) = 0$; 
    \item $P_{K^{\perp}}(\nabla_X \mathscr{Y}_{\mathbb{S}^{n-1}}(\varphi+\Upsilon(\varphi)))=0$ for every $\varphi \in U$.
    \item $P_{K}(\nabla_X\mathscr{Y}_{\mathbb{S}^{n-1}}(\varphi+\Upsilon(\varphi)))=\nabla\mathfrak{q}(\varphi)$ for every $\varphi\in U$, where $\mathfrak{q}:U\to\R$ is the analytic map on $U$ given by 
        \begin{align*}
            \mathfrak{q}(\varphi):=\varphi+\Upsilon(\varphi) \quad\forall\,\varphi\in U.
        \end{align*}
        \item There exists a constant $C>0$ such that very $\varphi,\eta\in U$, we have 
        \begin{align*}
            \|\nabla \Upsilon(\varphi)[\eta]\|_{C^{2,\alpha}(\mathbb{S}^{n - 1})}\le C\|\eta\|_{C^{0,\alpha}(\mathbb{S}^{n - 1})}.
        \end{align*}
\end{enumerate}
    
\end{lemma}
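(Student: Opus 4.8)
The plan is to follow the classical Lyapunov--Schmidt recipe, adapted to the elliptic restriction $\nabla_X^2\mathscr{Y}_{\mathbb{S}^{n-1}}(A_0;A_0) = \tilde L_{A_0}$ introduced in Remark \ref{Remark: ellipticity of the second variation of the YM-energy discrepancy}. Write $\mathscr{G}(A) := P_{K^\perp}\big(\nabla_X\mathscr{Y}_{\mathbb{S}^{n-1}}(A)\big)$, viewed as a map from a neighbourhood of $A_0$ in $X$ into $K^\perp$. Since $\mathscr{Y}_{\mathbb{S}^{n-1}}(\,\cdot\,;A_0)$ is real-analytic on $(W^{1,2}\cap L^4)$ by Proposition \ref{proposition: analyticity}, and since the Coulomb-gauge constraint defining $X$ cuts out a closed analytic submanifold, $\mathscr{G}$ is a real-analytic map between (slices of) Banach spaces. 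Its partial derivative in the $K^\perp$ direction at $A_0$ is $P_{K^\perp}\circ \tilde L_{A_0}\circ P_{K^\perp}$, which by elliptic theory on the closed manifold $\mathbb{S}^{n-1}$ is a Fredholm operator of index zero whose kernel is exactly $K\cap K^\perp = \{0\}$; hence it is an isomorphism from $K^\perp\cap C^{2,\alpha}$ onto $K^\perp\cap C^{0,\alpha}$. Decomposing a general element near $A_0$ as $A_0 + \varphi + \xi$ with $\varphi\in K$, $\xi\in K^\perp$ (both small), the analytic implicit function theorem in Banach spaces yields a unique analytic solution $\xi = \Upsilon(\varphi)$ of $\mathscr{G}(A_0+\varphi+\Upsilon(\varphi))=0$ on a neighbourhood $U\subset K$ of $0$, with $\Upsilon(0)=0$; this is (i) (first part) and (ii).

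For the remaining items I would differentiate the defining identity and project. Differentiating $\mathscr{G}(A_0+\varphi+\Upsilon(\varphi))=0$ at $\varphi=0$ and using $\nabla^2\mathscr{Y}$ evaluated at $A_0$ gives $P_{K^\perp}\tilde L_{A_0}(\eta + \nabla\Upsilon(0)[\eta]) = 0$ for all $\eta\in K$; since $\tilde L_{A_0}\eta = 0$ and $P_{K^\perp}\tilde L_{A_0}$ is injective on $K^\perp$, this forces $\nabla\Upsilon(0)=0$, completing (i). For (iii), set $\mathfrak{q}(\varphi) := \mathscr{Y}_{\mathbb{S}^{n-1}}(A_0 + \varphi + \Upsilon(\varphi);A_0)$ (note the statement's display should read $\mathfrak{q}(\varphi) = \mathscr{Y}_{\mathbb{S}^{n-1}}(\varphi+\Upsilon(\varphi))$, i.e. the energy discrepancy of the reduced connection, not the connection itself). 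Then by the chain rule, for $\eta\in K$,
\begin{align*}
    \nabla\mathfrak{q}(\varphi)[\eta] = \nabla_X\mathscr{Y}_{\mathbb{S}^{n-1}}(A_0+\varphi+\Upsilon(\varphi))\big[\eta + \nabla\Upsilon(\varphi)[\eta]\big].
\end{align*}
The term pairing with $\nabla\Upsilon(\varphi)[\eta]\in K^\perp$ vanishes because of (ii), i.e. because the $K^\perp$-component of the gradient is zero along the graph of $\Upsilon$; the term pairing with $\eta\in K$ is, by $L^2$-orthogonality, exactly $\langle P_K\nabla_X\mathscr{Y}_{\mathbb{S}^{n-1}}(A_0+\varphi+\Upsilon(\varphi)),\eta\rangle$. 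Identifying $K$ with its dual via the $L^2$ inner product, this says $P_K\big(\nabla_X\mathscr{Y}_{\mathbb{S}^{n-1}}(\varphi+\Upsilon(\varphi))\big) = \nabla\mathfrak{q}(\varphi)$, which is (iii). Analyticity of $\mathfrak{q}$ follows from analyticity of $\mathscr{Y}_{\mathbb{S}^{n-1}}$ and of $\Upsilon$.

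Finally, (iv) is the quantitative statement that $\nabla\Upsilon$ maps $C^{0,\alpha}$ continuously into $C^{2,\alpha}$. This comes from differentiating the identity $P_{K^\perp}\nabla_X\mathscr{Y}_{\mathbb{S}^{n-1}}(A_0+\varphi+\Upsilon(\varphi))=0$ in $\varphi$: for $\eta\in K$ one obtains
\begin{align*}
    P_{K^\perp}\,\nabla_X^2\mathscr{Y}_{\mathbb{S}^{n-1}}(A_0+\varphi+\Upsilon(\varphi))\big[\eta + \nabla\Upsilon(\varphi)[\eta],\;\cdot\;\big] = 0 \quad\text{on } K^\perp,
\end{align*}
so that $\nabla\Upsilon(\varphi)[\eta]$ solves $L\,\big(\nabla\Upsilon(\varphi)[\eta]\big) = -P_{K^\perp} L\,\eta$ in $K^\perp$, where $L := P_{K^\perp}\nabla_X^2\mathscr{Y}_{\mathbb{S}^{n-1}}(A_0+\varphi+\Upsilon(\varphi))$ is a small $C^{2,\alpha}\to C^{0,\alpha}$ perturbation of the elliptic isomorphism $P_{K^\perp}\tilde L_{A_0}P_{K^\perp}$ (using that $\varphi,\Upsilon(\varphi)$ are small and, by the structure in \eqref{Equation: linear operator inducing second variation}, enter only through lower-order/zeroth-order coefficients). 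Shrinking $U$ so $L$ stays invertible, Schauder estimates give $\|\nabla\Upsilon(\varphi)[\eta]\|_{C^{2,\alpha}} \le C\|P_{K^\perp}L\eta\|_{C^{0,\alpha}} \le C\|\eta\|_{C^{0,\alpha}}$, since $L\eta$ involves at most first derivatives of $\eta$ plus (on projecting) a correction, and $\eta$ ranges in the finite-dimensional smooth space $K$ on which all norms are equivalent — so in fact $\|\eta\|_{C^{0,\alpha}}$ already controls $\|\eta\|_{C^{1,\alpha}}$. I expect the main obstacle to be bookkeeping rather than conceptual: one must verify carefully that the Coulomb slice $X$ is genuinely preserved by all the objects (that $\Upsilon$ lands in $K^\perp\cap X$, that the projections commute appropriately with $d_{\tilde A}^*=0$), and that the reduced operator stays elliptic and invertible uniformly on $U$ despite the connection-dependent coefficients; the analyticity of $\mathscr{Y}_{\mathbb{S}^{n-1}}$ on the slice, inherited from Proposition \ref{proposition: analyticity}, is what makes the analytic implicit function theorem applicable and is the only genuinely nontrivial input.
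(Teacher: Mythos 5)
The paper states Lemma \ref{Lemma: Lyapunov-Schmidt reduction} without proof, treating the Lyapunov--Schmidt reduction as classical (the ambient precedents are \cite{AsymptoticSimon, AdamsSimon, EngelsteinSpolaorVelichkov}); there is therefore no paper argument to compare against, and the question is whether your reconstruction is sound. It is, and it is the standard one. You apply the analytic implicit function theorem to $\mathscr{G}(A)=P_{K^\perp}\nabla_X\mathscr{Y}_{\mathbb{S}^{n-1}}(A\,;A_0)$, using the analyticity of the quartic functional $\mathscr{Y}_{\mathbb{S}^{n-1}}$ from Proposition \ref{proposition: analyticity} and the ellipticity of $\tilde L_{A_0}$ on the Coulomb slice $X$ from Remark \ref{Remark: ellipticity of the second variation of the YM-energy discrepancy}, to solve $\mathscr{G}(A_0+\varphi+\Upsilon(\varphi))=0$; items (i)--(iii) then follow by the usual chain-rule computations, and your derivations are correct. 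You also rightly identify that the display defining $\mathfrak{q}$ in (iii) is a typo for $\mathfrak{q}(\varphi)=\mathscr{Y}_{\mathbb{S}^{n-1}}(A_0+\varphi+\Upsilon(\varphi)\,;A_0)$; this reading is confirmed by the definition of $f$ in \eqref{def: f}. For (iv), differentiating the defining identity to obtain the elliptic equation for $\nabla\Upsilon(\varphi)[\eta]$, invoking uniform invertibility of $P_{K^\perp}\nabla_X^2\mathscr{Y}_{\mathbb{S}^{n-1}}$ on a shrunk neighborhood, and then using finite-dimensionality of $K$ (so that $\eta\mapsto\tilde L_{A_0}\eta$ is bounded $C^{0,\alpha}\to C^{0,\alpha}$ because all norms on $K$ are equivalent) is exactly what makes the claimed gain of two derivatives hold; this is the right argument. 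The one point you file under ``bookkeeping'' that genuinely deserves to be made explicit is that the implicit function theorem must be run inside the Coulomb slice $X=\ker d_{\tilde A}^*$, with $K^\perp$ understood as $K^\perp\cap X$ (equivalently the $L^2$-orthogonal complement of $K$ within $X$): ellipticity of the linearization is only available after this gauge fixing, and one must check both that $K\subset X$ and that the implicitly defined $\Upsilon(\varphi)$ remains in $X$, which follows because the equation $P_{K^\perp}\nabla_X\mathscr{Y}=0$ is posed on the linear subspace $X$ in the first place. The paper's notation ($K^\perp$ as a subspace of $L^2$ tout court, and the stray $A_0^g$ in a statement that already assumes $A_0\in X$) is silent or slightly inconsistent on this point, but your instinct to verify it is correct and resolves the ambiguity.
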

\section{Proof of the log-epiperimetric inequality for Yang--Mills connections} \label{sec: proof of epiperimetric ineq}
As by the assumptions of Theorem \ref{Theorem: (log)-epiperimetric inequality for Yang--Mills functional}, let $G$ be a compact matrix Lie group with Lie algebra $\mathfrak{g}$. Let $n\ge 5$ and let $A_0\in C^{\infty}(\mathbb{S}^{n-1},T^*\mathbb{S}^{n-1}\otimes\mathfrak{g})$ be a smooth $\mathfrak{g}$-valued $1$-form on $\mathbb{S}^{n-1}$. Let $\pi:\mathbb{B}^n\smallsetminus\{0\}\to\s^{n-1}$ be given by 
\begin{align*}
    \pi(x):=\frac{x}{\lvert x\rvert} \qquad\forall\,x\in\mathbb{B}^n\smallsetminus\{0\}
\end{align*}
and let $\tilde A_0\in (W^{1,2}\cap L^4)(\mathbb{B}^n,T^*\mathbb{B}^n\otimes\mathfrak{g})$ be the 0-homogeneous extension of $A_0$ inside $\mathbb{B}^n$, given by
\begin{align*}
    \tilde A_0:=\pi^*A_0.
\end{align*}
Let $\eta>0$. Fix any reference connection $\tilde A\in C^{\infty}(\mathbb{S}^{n-1},T^*\mathbb{S}^{n-1}\otimes\mathfrak{g})$ such that $\tilde{A} \neq A_0$ and
\begin{align*}
    \|A_0-\tilde A\|_{C^{2,\alpha}(\mathbb{S}^{n-1})}<\eta.
\end{align*}
By \cite[Theorem 8.1 and Remark 3.2-(ii)]{wehrheim}, if $\eta>0$ is small enough there exists a gauge transformation $g\in C^{\infty}(\mathbb{S}^{n-1},G)$ such that 
\begin{align*}
    d_{\tilde A}^*A_0^g=0.
\end{align*} 
As before, let $X\subset C^{2,\alpha}(\mathbb{S}^{n-1},T^*\mathbb{S}^{n-1}\otimes\mathfrak{g})$ be given by
\begin{align*}
    X:=\big\{A\in C^{2,\alpha}(\mathbb{S}^{n-1},T^*\mathbb{S}^{n-1}\otimes\mathfrak{g}) \mbox{ s.t. } d_{\tilde A}^*A=0\big\}.
\end{align*}
By Remark \ref{Remark: ellipticity of the second variation of the YM-energy discrepancy}, we know that 
\begin{align*}
    K:=\ker\nabla_X^2\mathscr{Y}_{\mathbb{S}^{n-1}}(A_0^g\,;A_0)
\end{align*}
is a finite-dimensional linear subspace of $C^{\infty}(\mathbb{S}^{n-1},T^*\mathbb{S}^{n-1}\otimes\mathfrak{g})$. As in Lemma \ref{Lemma: Lyapunov-Schmidt reduction}, let $K^{\perp}$ be the orthogonal complement of $K$ inside $L^2(\mathbb{S}^{n-1},T^*\mathbb{S}^{n-1}\otimes\mathfrak{g})$. Let $0\in U\subset K$ and $\Ups:U\to K^{\perp}$ be given by the Lyapunov--Schmidt reduction (Lemma \ref{Lemma: Lyapunov-Schmidt reduction}) of $\mathscr{Y}_{\mathbb{S}^{n-1}}(\,\cdot\,\,;A_0)|_X$ at its critical point $A_0^g$.  Denote by $P_K$ and $P_{K^{\perp}}$ the $L^2$-orthogonal linear projection operators on the subspaces $K$ and $K^{\perp}$ respectively.
Fix $\delta>0$ and assume that $A\in C^{2,\alpha}(\mathbb{S}^{n-1},T^*\mathbb{S}^{n-1}\otimes\mathfrak{g})$ is such that
\begin{align*}
    \|A-A_0\|_{C^{2,\alpha}(\mathbb{S}^{n-1})}<\delta.
\end{align*}
Assuming that $\delta<\eta$, we have 
\begin{align*}
    \|A-\tilde A\|_{C^{2,\alpha}(\mathbb{S}^{n-1})}\le\|A-A_0\|_{C^{2,\alpha}(\mathbb{S}^{n-1})}+\|A_0-\tilde A\|_{C^{2,\alpha}(\mathbb{S}^{n-1})}<2\eta.
\end{align*}
Possibly choosing $\eta>0$ smaller, by \cite[Theorem 8.1]{wehrheim} there exists $h\in C^{3,\alpha}(\mathbb{S}^{n-1},G)$ such that 
\begin{align*}
    d_{\tilde A}^*A^h=0.
\end{align*} 
Let $\varphi_A:=A^h-A_0^g\in C^{2,\alpha}(\s^{n-1},T^*\s^{n-1}\otimes\g)$ and notice that 
\begin{align*}
    d_{\tilde A}^*\varphi_A=0,
\end{align*}
so that $\varphi_A\in X$. By the properties of the Lyapunov--Schmidt reduction, for $\delta>0$ sufficiently small we have
\begin{align*}
    P_K\varphi_A&\in U.
\end{align*}
Thus, we can write
\begin{align*}
    \varphi_A&=P_K\varphi_A+P_{K^{\perp}}\varphi_A\\
    &=P_K\varphi_A+\Ups(P_K\varphi_A)+(P_{K^{\perp}}\varphi_A-\Ups(P_K\varphi_A))\\
    &=P_K\varphi_A+\Ups(P_K\varphi_A)+\varphi_A^{\perp},
\end{align*}
where we have defined 
\begin{align*}
    \varphi_A^{\perp}:=P_{K^{\perp}}\varphi_A-\Ups(P_K\varphi_A)\in K^{\perp}.
\end{align*}
By Remark \ref{Remark: ellipticity of the second variation of the YM-energy discrepancy}, the second variation $\nabla_X^2\mathscr{Y}_{\s^{n-1}}(A_0^g\,;A_0)$ is induced by an elliptic operator $\mathcal{L}_{\mathscr{Y}}$ on a compact manifold. Since every elliptic operator on a compact manifold has compact resolvent, by the spectral theory for operators with compact resolvent we know that there exist a countable orthonormal basis $\{\phi_j\}_{j\in\N}\subset C^{\infty}(\s^{n-1},T^*\s^{n-1}\otimes\g)$ of $L^2(\s^{n-1},T^*\s^{n-1}\otimes\g)$ and countably many real numbers\footnote{This follows from the symmetry of $\nabla_X^2\mathscr{Y}_{\s^{n-1}}(0\,;A_0)$, which translates in the $L^2$ self-adjointness of $\mathcal{L}_{\mathscr{Y}}$.} $\{\lambda_j\}_{j\in\N}$ such that 
\begin{align*}
    \mathcal{L}_{\mathscr{Y}}\phi_j=\lambda_j\phi_j \qquad\forall\,j\in\N. 
\end{align*}
Moreover, every eigenvalue $\lambda_j$ of $\mathcal{L}_{\mathscr{Y}}$ has finite multiplicity. We let
\begin{align*}
    \ell:=\dim K<+\infty
\end{align*}
and we assume that the eigenfunctions $\phi_j$ are ordered in such a way that the set $\{\phi_1,...,\phi_{\ell}\}$ forms an orthonormal basis of $K$. Define the index sets 
\begin{align*}
    J_{+}:=\{j\in\N \mbox{ : } \lambda_j>0\} \qquad \text{and} \qquad J_{-}:=\{j\in\N \mbox{ : } \lambda_j<0\}, 
\end{align*}
and we let $\{a_j\}_{j\in J_{-}\cup J_{+}}\subset\R$ and $\{b_{1},...,b_{\ell}\}\subset\R$ be such that 
\begin{align*}
    &\varphi_A^{\perp}=\sum_{j\in J_{-}}a_j\phi_j+\sum_{j\in J_{+}}a_j\phi_j=:\varphi_{A,-}^{\perp}+\varphi_{A,+}^{\perp}, \qquad \text{and} \qquad P_K\varphi_A=\sum_{j=1}^{\ell}b_j\phi_j.
\end{align*}
Since $P_K\varphi_A\in U$ and $U$ is an open set, there exists $\xi>0$ with $B_{\xi}^{\ell}(b)\subset\R^{\ell}$ such that for every $x=(x_1,...,x_{\ell})\in B_{\xi}^{\ell}(b)$ we have 
\begin{align*}
\sum_{j=1}^{\ell}x_j\phi_j\in U.
\end{align*}
Let $f:B_{\xi}^{\ell}(b)\subset\R^{\ell}\to\R$ be the real-analytic function given by 
\begin{align} \label{def: f}
    f(x):=\mathscr{Y}_{\mathbb{S}^{n-1}}\bigg(\sum_{j=1}^{\ell}x_j\phi_j+\Ups\bigg(\sum_{j=1}^{\ell}x_j\phi_j\bigg)\,;A_0\bigg) \qquad\forall\,x\in B_{\xi}^{\ell}(b).
\end{align}
Let $t_0\in(0,1)$ and let $v:[0,t_0]\to B_{\xi}^{\ell}(b)$ be the smooth vector field on $B_{\xi}^{\ell}(b)$ solving on $[0,t_0]$ the following normalized gradient flow equation for $f$ with initial condition $b=(b_1,...,b_{\ell})\in\R^{\ell}$:
\begin{align*}
    v'(t)&=\begin{cases}\displaystyle{-\frac{\nabla f(v(t))}{\lvert\nabla f(v(t))\rvert}} & \mbox{ if } f(v(t))>\displaystyle{\frac{f(b)}{2}}\\0 & \mbox{ otherwise;}\end{cases}\\
    v(0)&=b.
\end{align*}
Note that this is a \textit{finite dimensional Yang--Mills heat flow}. Let then $\eta,\eta_+:[0,1]\to\R$ be the cut-off functions given by  
\begin{align} \label{def: eta and eta+}
    \eta(\rho):=\varepsilon_ff(b)^{1-\gamma}\sqrt{n-2}C(1-\rho) \quad \text{and} \quad 
    \eta_+(\rho):=1-(1-\rho)\alpha\varepsilon,
\end{align}
for all $\rho \in [0, 1]$, and where $\varepsilon,\varepsilon_f,C,\alpha>0$ and $\gamma\in[0,1)$ are parameters to be chosen later in the proof. For now we just assume that 
\begin{align*}
    \varepsilon_ff(b)^{1-\gamma}\sqrt{n}C < t_0
\end{align*}
so that $0\le\eta<t_0$. Then, let $\mu:\mathbb{B}^n\smallsetminus\{0\}\to\pi^*T^*\s^{n-1} \otimes \mathfrak{g}$ be given by
\begin{align*}
    \mu(x):=\sum_{j=1}^{\ell}v_j(\eta(\lvert x\rvert))\phi_j(\pi(x))\qquad\forall\,x\in\mathbb{B}^n\smallsetminus\{0\}. 
\end{align*}
Define $\varphi_{\hat A}\in (W^{1,2}\cap L^4)(\mathbb{B}^n,T^*\mathbb{B}^n\otimes\g)$ by
\begin{align}\label{Equation: definition of the competitor before projection}
    \varphi_{\hat A}(x):=\mu(x)[d\pi(x)]+\Ups(\mu(x))[d\pi(x)]+(\pi^*\varphi_{A,-}^{\perp})(x)+\eta_+(\lvert x\rvert)(\pi^*\varphi_{A,+}^{\perp})(x)
\end{align}
for every $x\in\mathbb{B}^n\smallsetminus\{0\}$. Lastly, let $\tilde h:=\pi^*h\in(W^{2,2}\cap W^{1,4})(\mathbb{B}^n,G)$ and define the competitor $\hat A\in (W^{1,2}\cap L^4)(\mathbb{B}^n,T^*\mathbb{B}^n\otimes\g)$ to be 
\begin{align*}
    \hat A:=(\pi^*A_0^g+\varphi_{\hat A})^{\tilde h^{-1}}.  
\end{align*}
Notice then that
\begin{align*}
    \iota_{\mathbb{S}^{n-1}}^*\hat A=(A_0^g+P_K\varphi_A+\Ups(P_K\varphi_A)+\varphi_{A,-}^{\perp}+\varphi_{A,+}^{\perp})^{h^{-1}}=(A_0^g+\varphi_A)^{h^{-1}}=A. 
\end{align*}
By Lemma \ref{Lemma: slicing lemma}, we have the bound 
\begin{align} \label{equation: starting estimate}
\begin{split}
    \mathscr{Y}_{\mathbb{B}^n}(\hat{A}\,;\tilde A_0) - & (1 - \varepsilon) \mathscr{Y}_{\mathbb{B}^n}(\tilde{A}\,;\tilde A_0) \\
    & \le\int_0^1\left( \mathscr{Y}_{\mathbb{S}^{n-1}}(\Psi_{\rho}^*\hat{A}\,;A_0)  - (1 - \varepsilon) \mathscr{Y}_{\mathbb{S}^{n - 1}}(A\,; A_0)\right)\rho^{n-5}\,d\mathcal{L}^1(\rho) \\
    & \qquad +\int_0^1\int_{\mathbb{S}^{n-1}}\lvert\Psi_{\rho}^*(F_{\hat{A}}\res\nu_0)\rvert^2\,d\mathscr{H}^{n-1}\rho^{n-3}\,d\mathcal{L}^1(\rho) \\
    & := \mathrm{I} + \mathrm{II},
    \end{split}
\end{align}
where we defined 
\begin{equation*}
    \mathrm{I} := \int_0^1\left( \mathscr{Y}_{\mathbb{S}^{n-1}}(\Psi_{\rho}^*\hat{A}\,;A_0)  - (1 - \varepsilon) \mathscr{Y}_{\mathbb{S}^{n - 1}}(A \,; A_0)\right)\rho^{n-5}\,d\mathcal{L}^1(\rho), 
\end{equation*}
as well as 
\begin{equation*}
    \mathrm{II} := \int_0^1\int_{\mathbb{S}^{n-1}}\lvert\Psi_{\rho}^*(F_{\hat A}\res\nu_0)\rvert^2\,d\mathscr{H}^{n-1}\rho^{n-3}\,d\mathcal{L}^1(\rho). 
\end{equation*}
Notice that, since $\pi^*A_0^g$ is a conical connection, we have
\begin{align}\label{Equation: estimate 1}
    \lvert\Psi_{\rho}^*(F_{\hat A}\res\nu_0)\rvert&=\lvert\Psi_{\rho}^*(F_{\pi^*A_0^g+\varphi_{\hat A}}\res\nu_0)\rvert=\lvert\Psi_{\rho}^*(F_{\varphi_{\hat A}}\res\nu_0)\rvert.
\end{align}
Moreover, by analogous reasons, we have
\begin{align*}
    (\varphi_{\hat A}\wedge\varphi_{\hat A})\res\nu_0=0
\end{align*}
which implies that 
\begin{align}\label{Equation: estimate 2}
\begin{split}
    \lvert\Psi_{\rho}^*(F_{\varphi_{\hat A}}\res\nu_0)\rvert^2&=\lvert\Psi_{\rho}^*(d\varphi_{\hat A}\res\nu_0)\rvert^2\\
    &\le\hat C\big((\eta'(\rho))^2(1+\|\nabla\Ups(v)[v']\|_{L^{\infty}(0,t_0)}^2)+(\eta_+'(\rho))^2\lvert\varphi_{A,+}^{\perp}\rvert^2\big),
\end{split}
\end{align}
for some constant $\hat C>0$ depending only on $A_0$. Let $C_F > 0$ be the constant given by Lemma \ref{Lemma: Lyapunov-Schmidt reduction}-(iv). By plugging the estimate given by Lemma \ref{Lemma: Lyapunov-Schmidt reduction}-(iv) in \eqref{Equation: estimate 2} we get
\begin{align}\label{Equation: estimate 3}
    \lvert\Psi_{\rho}^*(F_{\varphi_{\hat A}}\res\nu_0)\rvert^2& \le \hat C(1+C_F^2)(\eta'(\rho))^2+\hat C(\eta_+'(\rho))^2\lvert\varphi_{A,+}^{\perp}\rvert^2.
\end{align}
Combining \eqref{Equation: estimate 1} and \eqref{Equation: estimate 3} we obtain 
\begin{align} \label{equation: estimate for II}
\begin{split}
    \mathrm{II} & = \int_0^1\int_{\mathbb{S}^{n-1}}\lvert\Psi_{\rho}^*(F_{\hat A}\res\nu_0)\rvert^2\,d\mathscr{H}^{n-1}\rho^{n-3}\,d\mathcal{L}^1(\rho) \\
    &=\int_0^1\int_{\mathbb{S}^{n-1}}\lvert\Psi_{\rho}^*(F_{\varphi_{\hat A}}\res\nu_0)\rvert^2\,d\mathscr{H}^{n-1}\rho^{n-3}\,d\mathcal{L}^1(\rho)\\
    &\le\int_0^1\int_{\mathbb{S}^{n-1}}\big(\hat C(1+C_F^2)(\eta'(\rho))^2+\hat C(\eta_+'(\rho))^2\lvert\varphi_{A,+}^{\perp}\rvert^2\big)\rho^{n-3}\,d\mathcal{L}^1(\rho)\\
    &=\hat C(1+C_F^2)\mathscr{H}^{n-1}(\mathbb{S}^{n-1})\int_0^1\eps_f^2f(b)^{2-2\gamma}C^2(n-2)\rho^{n-3}\, d\mathcal{L}^1(\rho)\\
    &\quad+\hat C\|\varphi_{A,+}^{\perp}\|_{L^2(\mathbb{S}^{n-1})}^2\int_0^1\eps^2\alpha^2\rho^{n-3}\,d\mathcal{L}^1(\rho)\\
    &\le\tilde C\big(\eps_f^2f(b)^{2-2\gamma}+\eps^2\|\varphi_{A,+}^{\perp}\|_{L^2(\mathbb{S}^{n-1})}^2\big),
\end{split} 
\end{align}
where we have let
\begin{align*}
    \tilde C:=\hat C\bigg((1+C_F^2)\mathscr{H}^{n-1}(\mathbb{S}^{n-1})C^2+\frac{\alpha^2}{n-2}\bigg).
\end{align*}
We can now turn to estimate the first term $\mathrm{I}$. Notice that we can write 
\begin{align*}
   \mathscr{Y}_{\mathbb{S}^{n-1}}(\Psi_{\rho}^*\hat{A}\,;A_0) &  - (1 - \varepsilon) \mathscr{Y}_{\mathbb{S}^{n - 1}}(A\,; A_0) \\
   & = \mathscr{Y}_{\mathbb{S}^{n-1}}(\Psi_{\rho}^*\hat{A}\,;A_0) - \mathscr{Y}_{\mathbb{S}^{n - 1}}(A_0^g+\Psi_{\rho}^*(\mu[d\pi] + \Upsilon(\mu)[d\pi])\, ; A_0) \\
   & \quad - (1 - \varepsilon) \left( \mathscr{Y}_{\mathbb{S}^{n - 1}}(A_0^g+\varphi_A\,; A_0) - \mathscr{Y}_{\mathbb{S}^{n - 1}}(A_0^g+P_K \varphi_A + \Upsilon(P_K \varphi_A); A_0) \right) \\
   & \quad + \mathscr{Y}_{\mathbb{S}^{n - 1}}(A_0^g+\Psi_{\rho}^*(\mu[d\pi] + \Upsilon(\mu)[d\pi]) \, ; A_0) \\
   & \quad - (1 - \varepsilon) \mathscr{Y}_{\mathbb{S}^{n - 1}}(A_0^g+P_K \varphi_A + \Upsilon(P_K \varphi_A) \,; A_0) \\
   & = \mathscr{Y}_{\mathbb{S}^{n-1}}(A_0^g+\Psi_{\rho}^*\varphi_{\hat{A}}\,;A_0) - \mathscr{Y}_{\mathbb{S}^{n - 1}}(A_0^g+\mu(\rho\,\cdot\,)+\Upsilon(\mu(\rho\,\cdot\,))\, ; A_0) \\
   & \quad - (1 - \varepsilon) \left( \mathscr{Y}_{\mathbb{S}^{n - 1}}(A_0^g+\varphi_A \,; A_0) - \mathscr{Y}_{\mathbb{S}^{n - 1}}(A_0^g+P_K \varphi_A + \Upsilon(P_K \varphi_A); A_0) \right) \\
   & \quad + \mathscr{Y}_{\mathbb{S}^{n - 1}}(A_0^g+\mu(\rho\,\cdot\,) + \Upsilon(\mu(\rho\,\cdot\,)) \, ; A_0) \\
   & \quad - (1 - \varepsilon) \mathscr{Y}_{\mathbb{S}^{n - 1}}(A_0^g+P_K \varphi_A + \Upsilon(P_K \varphi_A) \,; A_0) \\
   & = \mathrm{III} + \mathrm{IV}
\end{align*}
where we defined
\begin{align*}
    \mathrm{III} & = \mathscr{Y}_{\mathbb{S}^{n-1}}(A_0^g+\Psi_{\rho}^*\varphi_{\hat{A}}\,;A_0) - \mathscr{Y}_{\mathbb{S}^{n - 1}}(A_0^g+\mu(\rho\,\cdot\,)+\Upsilon(\mu(\rho\,\cdot\,))\, ; A_0) \\
   & \quad - (1 - \varepsilon) \left( \mathscr{Y}_{\mathbb{S}^{n - 1}}(A_0^g+\varphi_A \,; A_0) - \mathscr{Y}_{\mathbb{S}^{n - 1}}(A_0^g+P_K \varphi_A + \Upsilon(P_K \varphi_A)\,; A_0) \right), 
\end{align*}
and 
\begin{align*}
    \mathrm{IV} & = \mathscr{Y}_{\mathbb{S}^{n - 1}}(A_0^g+\mu(\rho\,\cdot\,) + \Upsilon(\mu(\rho\,\cdot\,)) \, ; A_0) \\
   & \quad - (1 - \varepsilon) \mathscr{Y}_{\mathbb{S}^{n - 1}}(A_0^g+P_K \varphi_A + \Upsilon(P_K \varphi_A) \,; A_0). 
\end{align*}
Letting now 
\begin{equation} \label{equation: definition psi rho}
    \psi_{\rho}:= \Psi_{\rho}^*\varphi_{\hat{A}} - \mu(\rho\,\cdot\,) - \Upsilon(\mu(\rho\,\cdot\,)) 
\end{equation}
and, by Taylor expanding around $A_0^g$, we deduce 
\begin{align*}
    \mathrm{III} & = \nabla \mathscr{Y}_{\mathbb{S}^{n - 1}}(A_0^g+\mu(\rho\,\cdot\,)+\Upsilon(\mu(\rho\,\cdot\,))\, ; A_0)[\psi_{\rho}] \\
    & \quad + \frac{1}{2}\nabla^2 \mathscr{Y}_{\mathbb{S}^{n - 1}}(A_0^g+\mu(\rho\,\cdot\,)+\Upsilon(\mu(\rho\,\cdot\,) + s_1 \psi_{\rho})\, ; A_0)[\psi_{\rho},\psi_{\rho}] \\
    & \quad - (1 - \varepsilon) \nabla \mathscr{Y}_{\mathbb{S}^{n - 1}}(A_0^g+P_K\varphi_A+\Ups(P_K\varphi_A)\,;A_0)[\varphi_A^{\perp}] \\
    & \quad - \frac{1 - \varepsilon}{2} \nabla^2\mathscr{Y}_{\mathbb{S}^{n - 1}}(A_0^g+P_K\varphi_A+\Ups(P_K\varphi_A)+s_2\varphi_A^{\perp}\,;A_0)[\varphi_A^{\perp},\varphi_A^{\perp}]\\
    &=\frac{1}{2}\nabla^2 \mathscr{Y}_{\mathbb{S}^{n - 1}}(A_0^g+\mu(\rho\,\cdot\,)+\Upsilon(\mu(\rho\,\cdot\,) + s_1 \psi_{\rho})\, ; A_0)[\psi_{\rho},\psi_{\rho}]\\
    & \quad - \frac{1 - \varepsilon}{2} \nabla^2\mathscr{Y}_{\mathbb{S}^{n - 1}}(A_0^g+P_K\varphi_A+\Ups(P_K\varphi_A)+s_2\varphi_A^{\perp}\,;A_0)[\varphi_A^{\perp},\varphi_A^{\perp}]
\end{align*}
for some $s_1, s_2 \in [0, 1]$, where in the second equality we have used that $\psi_{\rho},\varphi_{A}^{\perp}\in K^{\perp}$ and Lemma \ref{Lemma: Lyapunov-Schmidt reduction}-(ii). By using the analyticity of $\mathscr{Y}_{\mathbb{S}^{n-1}}(\,\cdot\,\,;A_0)$ around $A_0^g$, and in particular the fact that its second variation is locally Lipschitz around $A_0^g$ (it is actually smooth around such point), we get that there exists $L>0$ such that 
\begin{align*}
    \lvert\nabla^2\mathscr{Y}_{\mathbb{S}^{n-1}}(\xi)[\zeta,\zeta]-\nabla^2\mathscr{Y}_{\mathbb{S}^{n-1}}(0)[\zeta,\zeta]\rvert\le L\|\xi\|_{C^{2,\alpha}(\mathbb{S}^{n-1})}\|\zeta\|_{L^2(\mathbb{S}^{n-1})}^2
\end{align*}
for every $\zeta\in C^{2,\alpha}(\mathbb{S}^{n-1},T^*\s^{n-1}\otimes\g)$ and $\xi\in C^{2,\alpha}(\mathbb{S}^{n-1},T^*\s^{n-1}\otimes\g)$ sufficiently close to $A_0^g$ in the $C^{2,\alpha}$-norm. Thus, we get
\begin{align}\label{Equation: estimate 4}
\begin{split}
    \operatorname{III}&\le \frac{1}{2}\nabla^2\mathscr{Y}_{\mathbb{S}^{n-1}}(A_0^g\,;A_0)[\psi_{\rho},\psi_{\rho}]-\frac{1-\varepsilon}{2}\nabla^2\mathscr{Y}_{\mathbb{S}^{n-1}}(A_0^g\,;A_0)[\varphi_A^{\perp},\varphi_A^{\perp}]\\
    &\quad+L\|\mu(\rho\,\cdot\,)+\Ups(\mu(\rho\,\cdot\,))+s_1\psi_{\rho}\|_{C^{2,\alpha}(\mathbb{S}^{n-1})}\|\psi_{\rho}\|_{L^2(\mathbb{S}^{n-1})}^2\\
    &\quad+L\|P_K\varphi_A+\Ups(P_K\varphi_A)+s_2\varphi_A^{\perp}\|_{C^{2,\alpha}(\mathbb{S}^{n-1})}\|\varphi_A^{\perp}\|_{L^2(\mathbb{S}^{n-1})}^2\\
    &\le \frac{1}{2}\nabla^2\mathscr{Y}_{\mathbb{S}^{n-1}}(A_0^g\,;A_0)[\psi_{\rho},\psi_{\rho}]-\frac{1-\varepsilon}{2}\nabla^2\mathscr{Y}_{\mathbb{S}^{n-1}}(A_0^g\,;A_0)[\varphi_A^{\perp},\varphi_A^{\perp}]\\
    &\quad+L\|\mu(\rho\,\cdot\,)+\Ups(\mu(\rho\,\cdot\,))+s_1\psi_{\rho}\|_{C^{2,\alpha}(\mathbb{S}^{n-1})}\|\psi_{\rho}\|_{L^2(\mathbb{S}^{n-1})}^2\\
    &\quad+L\big(\|P_K\varphi_A\|_{C^{2,\alpha}(\mathbb{S}^{n-1})}+\|\varphi_A^{\perp}\|_{C^{2,\alpha}(\mathbb{S}^{n-1})}\big)\|\varphi_A^{\perp}\|_{L^2(\mathbb{S}^{n-1})}^2.
\end{split}
\end{align}
Notice that, by definition of $\psi_\rho$, we have 
\begin{equation*}
         \psi_{\rho}:= \Psi_{\rho}^*\varphi_{\hat{A}} - \mu(\rho\,\cdot\,) - \Upsilon(\mu(\rho\,\cdot\,)) = (\varphi_{A,-}^{\perp})(x)+\eta_+(\lvert x\rvert)(\varphi_{A,+}^{\perp})(x). 
\end{equation*}
Hence, 
\begin{align}\label{Equation: estimate 5}
\begin{split}
    \frac{1}{2}\nabla^2&\mathscr{Y}_{\mathbb{S}^{n-1}}(A_0^g\,;A_0)[\psi_{\rho},\psi_{\rho}]-\frac{1-\varepsilon}{2}\nabla^2\mathscr{Y}_{\mathbb{S}^{n-1}}(A_0^g\,;A_0)[\varphi_A^{\perp},\varphi_A^{\perp}] \\
    & =\frac{\varepsilon}{2}\nabla^2\mathscr{Y}_{\mathbb{S}^{n-1}}(A_{0}^{g} \,; A_0)[\varphi_{A,-}^{\perp},\varphi_{A,-}^{\perp}]+\frac{\eta_{+}^2(\rho)-(1-\varepsilon)}{2}\nabla^2\mathscr{Y}_{\mathbb{S}^{n-1}}(A_{0}^{g} \,; A_0)[\varphi_{A,+}^{\perp},\varphi_{A,+}^{\perp}]
\end{split}
\end{align}
and 
\begin{align}\label{Equation: estimate 6}
\begin{split}
    \|\mu(\rho \, \cdot \,)+\Upsilon(\mu(\rho \, \cdot \,))& + s_1\psi_{\rho}\|_{C^{2,\alpha}(\mathbb{S}^{n-1})}\|\psi_{\rho}\|_{W^{1,2}(\mathbb{S}^{n-1})}^2\\
    &\le C\big(\|\mu(\rho \, \cdot \,)\|_{C^{2,\alpha}(\mathbb{S}^{n-1})}+\|\varphi_A^{\perp}\|_{C^{2,\alpha}(\mathbb{S}^{n-1})}\big)\|\varphi_A^{\perp}\|_{L^2(\mathbb{S}^{n-1})}^2. 
\end{split}
\end{align}
By plugging \eqref{Equation: estimate 5} and \eqref{Equation: estimate 6} in \eqref{Equation: estimate 4} we get
\begin{align} \label{equation: estimate III before integrating}
\begin{split}
    \operatorname{III}&\le\frac{\varepsilon }{2}\nabla^2\mathscr{Y}_{\mathbb{S}^{n-1}}(A_{0}^{g} \,; A_0)[\varphi_{A,-}^{\perp},\varphi_{A,-}^{\perp}]+\frac{\eta_{+}^2(\rho)-(1-\varepsilon)}{2}\nabla^2\mathscr{Y}_{\mathbb{S}^{n-1}}(A_{0}^{g} \,; A_0)[\varphi_{A,+}^{\perp},\varphi_{A,+}^{\perp}] \\
    &\quad +C\big(\|\mu(\rho \, \cdot \,)\|_{C^{2,\alpha}(\mathbb{S}^{n-1})}+\|P_K\varphi_A\|_{C^{2,\alpha}(\mathbb{S}^{n-1})}+2\|\varphi_A^{\perp}\|_{C^{2,\alpha}(\mathbb{S}^{n-1})}\big)\|\varphi_A^{\perp}\|_{L^2 (\mathbb{S}^{n-1})}^2.
\end{split}
\end{align}
By definition of $\eta_+$, cf. \eqref{def: eta and eta+}, up to choosing $\alpha>0$ big enough depending on $n\ge 5$ there exists a constant $c>0$ depending on $n$ such that 
\begin{equation*}
  \int_{0}^{1}(\eta_{+}^{2}(\rho) - (1 - \varepsilon)) \rho^{n - 5} \, d\rho \leq -c\,\varepsilon.   
\end{equation*}
Consequently, multiplying \eqref{equation: estimate III before integrating} by $\rho^{n - 5}$ and integrating it with respect to $\rho$, we infer 
\begin{align} \label{equation: estimate III after integrating}
\begin{split}
    \int_{0}^{1} & \operatorname{III} \rho^{n - 5} \, d\mathcal{L}^1(\rho) \\
    & \leq \varepsilon \max_{\lambda_j < 0} \lambda_j \Vert \varphi_{A,-}^{\perp} \Vert_{L^2(\mathbb{S}^{n - 1})}^2 - c\,\varepsilon \min_{\lambda_j > 0} \lambda_j \Vert \varphi_{A,+}^{\perp} \Vert_{L^2(\mathbb{S}^{n - 1})}^2 \\
    & \quad + C\big(\|\mu(\rho \, \cdot \,)\|_{C^{2,\alpha}(\mathbb{S}^{n-1})}+\|P_K\varphi_A\|_{C^{2,\alpha}(\mathbb{S}^{n-1})}+\|\varphi_A^{\perp}\|_{C^{2,\alpha}(\mathbb{S}^{n-1})}\big)\|\varphi_A^{\perp}\|_{L^2(\mathbb{S}^{n-1})}^2 \\
    & \leq - \left( C_{A_0} \varepsilon - C\big(\|\mu(\rho \, \cdot \,)\|_{C^{2,\alpha}(\mathbb{S}^{n-1})}+\|P_K\varphi_A \|_{C^{2,\alpha}(\mathbb{S}^{n-1})}+\|\varphi_A^{\perp}\|_{C^{2,\alpha}(\mathbb{S}^{n-1})}\big) \right) \|\varphi_A^{\perp}\|_{L^2(\mathbb{S}^{n-1})}^2, 
\end{split} 
\end{align}
where $C_{A_0} > 0$ is a constant depending only on $n$ and the spectral gap of the second variation, thus implying that it depends on $A_0$. We remark that here we need $n \geq 5$ to have finiteness of the term $\int_{0}^{1} \rho^{n - 5} d\rho$. 
Notice now that using Stokes' theorem, we infer the following pointwise bound
\begin{equation*}
    \vert \mu(\rho \, \cdot \,) - P_K(\varphi_A) \vert \leq \int_{0}^{\eta(\rho)} \vert d(\mu(t \, \cdot \,)) \vert \; dt \leq \vert \eta(\rho) \vert \leq C\varepsilon_f f(b)^{\gamma}, 
\end{equation*}
as well as
\begin{equation*}
    \left\vert d \mu(\rho \, \cdot \,) \right\vert \leq \varepsilon_f f(b)^{\gamma}, 
\end{equation*}
so that choosing $\varepsilon_f$ sufficiently small, and combining these estimates with elliptic regularity, we have the estimate 
\begin{equation*}
    \Vert \mu(\rho \, \cdot \,) \Vert_{C^{2, \alpha}(\mathbb{S}^{n - 1})} \leq 2 \Vert P_K\varphi_A \Vert_{C^{2, \alpha}(\mathbb{S}^{n - 1})}. 
\end{equation*}
Whence, choosing $\delta>0$ sufficiently small (depending on $C_{A_0}$) and plugging $\|\varphi_A\|_{C^{2, \alpha}}(\mathbb{S}^{n - 1})<\delta$ in \eqref{equation: estimate III after integrating}, we infer 
\begin{equation} \label{equation: final integral estimate for III}
    \int_{0}^{1} \operatorname{III} \rho^{n - 5} \; d\rho \leq - C_{A_0} \varepsilon \|\varphi_A^{\perp}\|_{L^2(\mathbb{S}^{n-1})}^2. 
\end{equation}
We are now left with estimating $\operatorname{IV}$. To this sake, we record \L ojasiewicz's inequality for analytic function in $\mathbb{R}^l$, cf. \cite{Loj}.

\begin{lemma} \label{lemma: finite dimensional Lojasiewicz inequality}
    Consider an open set $U \subset \mathbb{R}^l$, and an analytic function $h \colon U \rightarrow \mathbb{R}$. For every critical point $x \in U$ of $h$, there exist a neighborhood $V$ of $x$, an exponent $\gamma \in (0, 1/2]$, and a constant $K$ such that 
    \begin{equation*}
        \vert h(x) - h(y) \vert^{1 - \gamma} \leq K \vert \nabla h(y) \vert, 
    \end{equation*}
    for all $y \in V$. 
\end{lemma}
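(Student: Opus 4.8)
This is the classical finite-dimensional \L ojasiewicz gradient inequality \cite{Loj}; I would record it as such, but let me describe how a proof goes. First I would normalize: translating and subtracting a constant one may assume $x=0$ and $h(0)=0$ (if $h\equiv 0$ near $0$ there is nothing to prove, so assume $h\not\equiv 0$), and after squaring it suffices to produce an exponent $\theta\in[1/2,1)$, a constant $c>0$ and a ball $V=B_r(0)$ with
\begin{equation*}
    \lvert\nabla h(y)\rvert^{2}\ \ge\ c\,\lvert h(y)\rvert^{2\theta}\qquad\text{for all }y\in V,
\end{equation*}
the assertion then following with $\gamma:=1-\theta\in(0,1/2]$ and a relabelled constant.

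The plan is to invoke Hironaka's resolution of singularities in its local real-analytic form. This provides a proper real-analytic map $\pi\colon\widetilde U\to U$, an isomorphism over the complement of a proper analytic subset, such that around each point of the compact set $\pi^{-1}(\overline{B_r(0)})$ one has local coordinates $(x_1,\dots,x_l)$ in which $h\circ\pi=u(x)\prod_i x_i^{a_i}$ with $u$ a nonvanishing analytic unit, and in which $D\pi$ is bounded (by compactness and continuity). Since $h(0)=0$ and $\pi$ is surjective onto a neighbourhood of $0$, one has $h\circ\pi\equiv 0$ on $\pi^{-1}(0)$, hence $m:=\sum_i a_i\ge 1$ on each chart meeting $\pi^{-1}(0)$; and in fact $m\ge 2$ there, using that $0$ is critical --- were $m=1$, differentiating $h\circ\pi=u\,x_k$ would show that $\nabla(h\circ\pi)$ is a nonvanishing unit at the corresponding point of $\pi^{-1}(0)$, which through $\nabla(h\circ\pi)=D\pi^{\mathsf T}(\nabla h\circ\pi)$ would force $\nabla h(0)\ne 0$, a contradiction. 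Next I would differentiate $h\circ\pi$ in a direction $x_k$ with $a_k\ge 1$ to obtain, on a slightly smaller chart,
\begin{equation*}
    \lvert\nabla(h\circ\pi)(x)\rvert\ \ge\ c_0\,\frac{\lvert h\circ\pi(x)\rvert}{\min_{a_i\ge 1}\lvert x_i\rvert}\,,
\end{equation*}
and since $\lvert h\circ\pi(x)\rvert^{1-\theta}\ge c_1\big(\min_{a_i\ge 1}\lvert x_i\rvert\big)^{(1-\theta)m}\ge c_1\min_{a_i\ge 1}\lvert x_i\rvert$ as soon as $(1-\theta)m\le 1$, the choice $\theta:=1-\tfrac1m\in[\tfrac12,1)$ would give $\lvert\nabla(h\circ\pi)(x)\rvert\ge c_2\lvert h\circ\pi(x)\rvert^{\theta}$ on the chart. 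Combining this with $\lvert\nabla h(\pi(x))\rvert\ge\lVert D\pi(x)\rVert^{-1}\lvert\nabla(h\circ\pi)(x)\rvert$ and the boundedness of $D\pi$ on $\pi^{-1}(\overline{B_r(0)})$ pushes the estimate down to $\lvert\nabla h(y)\rvert\ge c_3\lvert h(y)\rvert^{\theta}$ on the open image of the chart; finally, covering the compact fibre $\pi^{-1}(\overline{B_r(0)})$ by finitely many such charts and taking $\theta$ to be the largest of the finitely many exponents $1-\tfrac1m\in[\tfrac12,1)$ would produce a single admissible pair $(\theta,c)$, since $\pi$ is onto a neighbourhood of $0$.

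The only genuinely deep input, and hence the main obstacle, is the resolution of singularities itself; everything after it is monomial bookkeeping. I want to stress that the lemma is \emph{not} a formal consequence of the ordinary \L ojasiewicz inequality $\lvert h(y)-h(x)\rvert^{N}\le C\lvert\nabla h(y)\rvert^{2}$ --- which does hold, because after shrinking every critical point of $h$ near $x$ lies in $\{h=h(x)\}$, each irreducible component of $\{\nabla h=0\}$ being connected with $h$ constant along it --- since that bound only yields $\theta=N/2$, possibly $\ge 1$, whereas the content here is precisely the strict bound $\theta<1$. An alternative route I could pursue avoids resolution in favour of the curve selection lemma: one reduces the estimate to its restriction to real-analytic arcs $\sigma$ with $\sigma(0)=0$ and compares the Puiseux orders of $h\circ\sigma$ and $\lvert\nabla h\circ\sigma\rvert$ by means of $(h\circ\sigma)'=\langle\nabla h\circ\sigma,\sigma'\rangle$ and Cauchy--Schwarz, the hard point being, once again, that this comparison must be made uniformly over all such arcs.
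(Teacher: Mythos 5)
The paper does not prove this lemma: it simply records the classical finite-dimensional \L ojasiewicz gradient inequality and cites \cite{Loj}, so there is no ``paper's own proof'' to compare against. Your sketch via Hironaka resolution and local monomialization is the standard argument and its outline is sound: differentiating $u\prod_i x_i^{a_i}$ in a coordinate achieving $\min_{a_i\ge 1}\lvert x_i\rvert$, taking $\theta=1-1/m$ per chart, pulling back through the bound on $\lVert D\pi\rVert$, and covering the compact fibre $\pi^{-1}(\overline{B_r(0)})$ by finitely many charts is exactly how the estimate is derived. Your remark that the naive comparison $\lvert h-h(x)\rvert^N\lesssim\lvert\nabla h\rvert^2$ (which \emph{does} follow from the ordinary \L ojasiewicz inequality applied to $\lvert\nabla h\rvert^2$ and $h-h(x)$, once one has shrunk so that every nearby critical value equals $h(x)$) gives no control on $\theta<1$ is correct and is precisely what the gradient inequality adds. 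Two small points your sketch elides but should be flagged in a complete write-up: on charts with $m\le 1$ --- which can occur away from $\pi^{-1}(x)$ --- the exponent $1-1/m$ is not in $[1/2,1)$, and one must observe separately that there $\lvert h\circ\pi\rvert$ is bounded below while, after the shrinking of $r$ just mentioned, $\lvert\nabla h\rvert$ is bounded below on the compact image, so the estimate holds trivially with the maximal $\theta$ taken over the $m\ge2$ charts (of which at least one exists, by your $m\ge 2$ argument, since $x$ is critical); and the pullback step $\lvert\nabla h\circ\pi\rvert\ge\lVert D\pi\rVert^{-1}\lvert\nabla(h\circ\pi)\rvert$ is vacuous where $D\pi$ degenerates, though this is harmless because surjectivity of $\pi$ only requires one usable preimage per $y$ and the chain rule forces $\nabla(h\circ\pi)=0$ wherever $D\pi=0$. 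These are minor gaps in an otherwise correct and appropriately referenced sketch of a cited standard result.
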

In particular, we can apply Lemma \ref{lemma: finite dimensional Lojasiewicz inequality} to $f$ defined in \eqref{def: f}, and infer the existence of a neighborhood $V$ of the origin, constants $K > 0$ and $\gamma \in (0, 1/2]$ depending on $A_0$ and the dimension $n$ such that $\vert f(v) \vert^{1 - \gamma} \leq K \vert \nabla f(v) \vert$, for every $v \in V$. Consequently, if $f(v(s)) > 0$, for $0 < s < t$, we have 
\begin{equation} \label{equation: FTC}
    f(v(t)) - f(v(0)) = f(v(t)) - f(b) = \int_{0}^{t} \nabla f(v(\tau)) \cdot v^\prime(\tau) \; d\tau = - \int_{0}^{t} \vert \nabla f(v(\tau)) \vert \; d\tau \leq 0,
\end{equation}
which in turn implies that the function $t \mapsto f(v(t))$ is non-increasing, so that there exists $\overline{\tau} > 0$ such that $f(v(t)) \geq f(b)/2 > 0$, for $0 \leq t \leq \overline{\tau}$, and $f(v(t)) \leq f(b)/2$ if $t \geq \overline{\tau}$. If $\eta(\rho) \leq \overline{\tau}$, we have the following 
\begin{alignat*}{2}
    \operatorname{IV} & = f(v(\eta(\rho))) - (1 - \varepsilon)f(b) \\
    & \leq - \int_{0}^{\eta(\rho)} \vert \nabla f(v(\tau)) \vert \, d\tau + \varepsilon f(b)  \qquad && \text{from \eqref{equation: FTC}} \\
    & \leq - K\int_{0}^{\eta(\rho)} \vert f(v(\tau)) \vert^{1 - \gamma} \; d\tau + \varepsilon f(b) \qquad && \text{from Lemma \ref{lemma: finite dimensional Lojasiewicz inequality}} \\
    & \leq - Kf(v(\eta(\rho)))^{1 - \gamma} \eta(\rho) + \varepsilon f(b) \qquad && \text{monotonicity of $f$} \\
    & \leq - \frac{K}{2^{1 - \gamma}} f(b)^{1 - \gamma} \eta(\rho) + \varepsilon f(b)  \qquad && \text{definition of $\overline{\tau}$} \\
    & \leq - \bigg(\frac{K}{2}\eta(\rho) - \varepsilon f(b)^{\gamma}\bigg) f(b)^{1 - \gamma}. 
\end{alignat*}
Otherwise, if $\eta(\rho) > \overline{\tau}$, we have 
\begin{align*}
\operatorname{IV} = f(v(\eta(\rho))) - (1 - \varepsilon)f(b) < - \left( \frac{1}{2} - \varepsilon \right) f(b) < - (\eta(\rho) - \varepsilon f(b)^{\gamma}) f(b)^{1 - \gamma}, 
\end{align*}
where for the last inequality we used the inequality $\vert \eta \vert \leq C \varepsilon_f f(b)^{1 - \gamma} < 1/2$ which holds as long as $f(b)$ is small enough. By letting 
\begin{align*}
    \tilde K:=\min\bigg\{\frac{K}{2},1\bigg\}
\end{align*}
we obtain 
\begin{align*}
    \operatorname{IV}\le - (\tilde K\eta(\rho) - \varepsilon f(b)^{\gamma}) f(b)^{1 - \gamma}, 
\end{align*}
and this concludes the estimate for $\operatorname{IV}$. 

We are now able to finish the proof of Theorem \ref{Theorem: (log)-epiperimetric inequality for Yang--Mills functional}. We have two cases. 
\begin{enumerate}[(a)]
    \item First, assume $f(b)^{1/2} < \nu \Vert \varphi_{A}^{\perp} \Vert_{L^2(\mathbb{S}^{n - 1})}$, for some universal constant $\nu$ depending only on $A_0$, and potentially the gauge $g$ which in turn depends on $A_0$,  and the dimension $n$. In this case, let $\varepsilon_f = 0$, so that $\eta \equiv 0$, and $\operatorname{IV} = \varepsilon f(b)$. In particular, from 
   \eqref{equation: starting estimate}, \eqref{equation: estimate for II} and \eqref{equation: final integral estimate for III}, we deduce 
    \begin{align*}
    \mathscr{Y}_{\mathbb{B}^n} &(\hat A\,;\tilde A_0) - (1-\varepsilon)\mathscr{Y}_{\mathbb{B}^n}(\tilde A\,;\tilde A_0) \\
        & \leq - C_{A_0} \varepsilon \|\varphi_A^{\perp}\|_{L^2(\mathbb{S}^{n-1})}^2 + \hat C\varepsilon^2\|\varphi_{A,+}^{\perp}\|_{L^2(\mathbb{S}^{n-1})}^2 + \varepsilon f(b) \\
        & \leq - (C_{A_0} - \nu - \hat C\varepsilon) \varepsilon \Vert \varphi_{A}^{\perp} \Vert_{L^2(\mathbb{S}^{n - 1})} < 0, 
    \end{align*}
    where the last inequality follows by choosing $\varepsilon$ and $\nu$ appropriately. 
    \item Otherwise, we set $\varepsilon = \varepsilon_f f(b)^{1 - \gamma}$ for some $\varepsilon_f$ sufficiently small depending only on $n$ and $u_0$, allowing us to estimate $\operatorname{IV}$ as follows: 
    \begin{align*}
        \int_{0}^{1} \operatorname{IV} \rho^{n - 5} \; d\rho & \leq - f(b)^{1 - \gamma} \int_{0}^{1} (\tilde K \eta(\rho) - \varepsilon f(b)^{\gamma}) \rho^{n - 5} \; d\rho \\
        & = - \varepsilon_f f(b)^{2 - 2 \gamma} \int_{0}^{1} (\tilde K C\sqrt{n}(1 - \rho) - f(b)^{\gamma}) \rho^{n - 5} \; d\rho \\
        & \leq - \hat K \varepsilon_f f(b)^{2 - 2 \gamma}, 
    \end{align*}
    for some constant $\hat K>0$, upon taking $C>0$ larger if necessary. Then, from this inequality, combined with \eqref{equation: starting estimate}, \eqref{equation: estimate for II} and \eqref{equation: final integral estimate for III} we infer 
    \begin{align*}
         \mathscr{Y}_{\mathbb{B}^n} &(\hat A\,;\tilde A_0) - (1-\varepsilon)\mathscr{Y}_{\mathbb{B}^n}(\tilde A\,;\tilde A_0) \\
        & \leq - C_{A_0} \varepsilon \|\varphi_A^{\perp}\|_{L^2(\mathbb{S}^{n-1})}^2 - \hat K \varepsilon_f f(b)^{2 - 2 \gamma} + \hat C\left(\varepsilon_f^2f(b)^{2-2\gamma}+\varepsilon^2\|\varphi_{A,+}^{\perp}\|_{L^2(\mathbb{S}^{n-1})}^2\right) \\
        & \leq - (C_{A_0} \varepsilon - \varepsilon^2) \|\varphi_A^{\perp}\|_{L^2(\mathbb{S}^{n-1})}^2 - (\hat K\varepsilon_f - \hat C \varepsilon_{f}^{2}) f(b)^{2 - 2 \gamma} < 0, 
    \end{align*}
    where the last inequality follows by choosing $\varepsilon_f$ small enough and the fact that $f(b) > 0$. Thus, we infer 
    \begin{align*}
        \mathscr{Y}_{\mathbb{B}^n}(\hat{A}\,;\tilde A_0) &= \frac{1}{n - 4}\mathscr{Y}_{\mathbb{S}^{n-1}}(A\,;A_0)  \\
       & = \frac{1}{n - 4}  \mathscr{Y}_{\mathbb{S}^{n-1}}(A\,;A_0) \\
       & \quad - \frac{1}{n - 4} \mathscr{Y}_{\mathbb{S}^{n-1}}(A_0^g + P_K \varphi_A + \Upsilon(P_K \varphi_A) \,;A_0) \\
       & \quad + \frac{1}{n - 4}\mathscr{Y}_{\mathbb{S}^{n-1}}(A_0^g + P_K \varphi_A + \Upsilon(P_K \varphi_A) \,;A_0)   \\
       & \leq C_{A_0} \|\varphi_A^{\perp}\|_{L^2(\mathbb{S}^{n-1})}^2 + f(b) \\
       & \leq \left( C_{A_0} \nu^{-2} + 1 \right) f(b),
    \end{align*}
    where, in order, we used the slicing Lemma \ref{Lemma: slicing lemma}, a Taylor expansion, and the hypothesis of case (b). Combining the above two inequalities we can conclude the desired log-epiperimetric inequality (upon relabelling the various quantities involved): 
    \begin{equation*}
                \mathscr{Y}_{\mathbb{B}^n}(\hat A\,;\tilde A_0)\le\big(1-\varepsilon\lvert\mathscr{Y}_{\mathbb{B}^n}(\tilde A\,;\tilde A_0)\rvert^{\gamma}\big)\mathscr{Y}_{\mathbb{B}^n}(\tilde A\,;\tilde A_0),
    \end{equation*}
    where $\varepsilon_f$ depends only on the dimension $n$ and $u_0$. 
\end{enumerate}

\subsection{The integrable case} \label{subsec: integrable case}
We now specialise the proof of Theorem \ref{Theorem: (log)-epiperimetric inequality for Yang--Mills functional} to the case of an integrable cone. We start by recalling from \cite{AdamsSimon} this notion. Note that Adams and Simon refer to this property as integrability of the kernel (of the second variation associated to the cone). We will say that $K:=\ker\nabla_{X}^{2}\mathscr{Y}_{\mathbb{S}^{n-1}}(A_{0}^{g}\,;A_0)$ is integrable if for every $v \in K$, there exists a family $\{A_s\}_{s \in (0, 1)} \subset C^{\infty}(\mathbb{S}^{n-1},T^*\mathbb{S}^{n-1}\otimes\mathfrak{g})$ with $A_s \rightarrow 0$ in $C^{\infty}(\mathbb{S}^{n-1},T^*\mathbb{S}^{n-1}\otimes\mathfrak{g})$, such that $ \nabla_X\mathscr{Y}_{\mathbb{S}^{n-1}}(A_s\,;A_0) = 0$ for every $s \in (0, 1)$, and $\lim_{s \rightarrow 0} A_s/s = v$ in the $L^2$-sense. In this setting, analyticity of $f$ defined in \eqref{def: f} implies the following lemma, whose proof can be found in \cite[Lemma 1]{AdamsSimon}, or \cite[Lemma 2.3]{EngelsteinSpolaorVelichkov}. 
\begin{lemma}
    The integrability condition holds for $\ker\nabla_{X}^2\mathscr{Y}_{\mathbb{S}^{n-1}}(A_{0}^{g}\,;A_0)$ if and only if $f \equiv f(0)$ in a neighborhood of $0$. 
\end{lemma}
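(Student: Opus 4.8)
The plan is to recast the integrability condition as a statement about the real-analytic function $f$ of \eqref{def: f}, now regarded near the origin of $\R^{\ell}\cong K$ with $\ell:=\dim K$, using the Lyapunov--Schmidt dictionary between critical points of $f$ and critical points of $\mathscr{Y}_{\mathbb{S}^{n-1}}(\,\cdot\,;A_0)$ restricted to the slice $X$. First I would record three preliminary facts. (1) The function $f$ is real-analytic, being the composition of the real-analytic functional $\mathscr{Y}_{\mathbb{S}^{n-1}}(\,\cdot\,;A_0)$ from Proposition~\ref{proposition: analyticity} with the analytic map $\varphi\mapsto\varphi+\Ups(\varphi)$ of Lemma~\ref{Lemma: Lyapunov-Schmidt reduction} and the linear isomorphism $x\mapsto\sum_{j}x_j\phi_j$. (2) The origin is a critical point of $f$: by Lemma~\ref{Lemma: Lyapunov-Schmidt reduction}(i),(iii) we have $\nabla f(0)=P_K\nabla_X\mathscr{Y}_{\mathbb{S}^{n-1}}(A_0^g\,;A_0)=0$ since $A_0^g$ is Yang--Mills, and since $\nabla\Ups(0)=0$ the Hessian $\nabla^2f(0)$ equals $\nabla_X^2\mathscr{Y}_{\mathbb{S}^{n-1}}(A_0^g\,;A_0)$ restricted to $K\times K$, which vanishes by the very definition of $K$; in particular the naive second-order test is inconclusive and the analytic structure has to be used. (3) The bridge: combining Lemma~\ref{Lemma: Lyapunov-Schmidt reduction}(ii)--(iii) with the uniqueness of $\Ups$, a connection sufficiently $C^{2,\alpha}$-close to $A_0^g$ inside $X$ is a critical point of $\mathscr{Y}_{\mathbb{S}^{n-1}}(\,\cdot\,;A_0)|_X$ if and only if it equals $A_0^g+\sum_jx_j\phi_j+\Ups(\sum_jx_j\phi_j)$ for some $x\in\R^{\ell}$ near $0$ with $\nabla f(x)=0$.

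For the implication ``$f\equiv f(0)$ near $0$ $\Rightarrow$ integrability'', I would argue as follows. Fix $v=\sum_j\xi_j\phi_j\in K$ and, for small $s\in(0,1)$, let $A_s$ be the connection associated through the reduction to $s\sum_j\xi_j\phi_j$, i.e. $A_s:=A_0^g+s\sum_j\xi_j\phi_j+\Ups(s\sum_j\xi_j\phi_j)$ (subtracting $A_0^g$ if one prefers the normalization in the definition of integrability). Since $f$ is constant, $\nabla f\equiv 0$ near $0$, so by the bridge each $A_s$ is a critical point of $\mathscr{Y}_{\mathbb{S}^{n-1}}(\,\cdot\,;A_0)|_X$, hence smooth by elliptic regularity in the slice $X$ (cf. Remark~\ref{Remark: ellipticity of the second variation of the YM-energy discrepancy}). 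Moreover $A_s\to A_0^g$ in $C^{\infty}$ as $s\to 0$ because $\Ups(0)=0$, and $s^{-1}(A_s-A_0^g)=\sum_j\xi_j\phi_j+s^{-1}\Ups(s\sum_j\xi_j\phi_j)\to v$ in $L^2$ because $\nabla\Ups(0)=0$ gives $\Ups(s\sum_j\xi_j\phi_j)=o(s)$. This is exactly the family required by the integrability condition.

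The converse is the heart of the matter, and the step where I expect the real work to lie. Let $Z:=\{x\in\R^{\ell}:\nabla f(x)=0\}$ be the critical set of $f$, a real-analytic germ at $0$. Reading integrability through the bridge, for each $v\in K$ the families $\{A_s\}$ granted by the hypothesis yield, after applying $P_K$ and passing to coordinates, a curve $s\mapsto c(s)\in Z$ with $c(0)=0$ and $c(s)/s$ converging to the coordinate vector of $v$; hence every vector of $\R^{\ell}$ lies in the tangent cone $C_0Z$, so $\dim C_0Z=\ell$, and by the classical fact that the tangent cone of a real-analytic germ has the same dimension as the germ we get $\dim_0Z=\ell$, whence $Z$ contains a nonempty open subset of $\R^{\ell}$. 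Consequently each analytic function $\partial_{x_i}f$ vanishes on a set of positive Lebesgue measure in the connected domain where $f$ is defined, so the identity principle for real-analytic functions forces $\partial_{x_i}f\equiv0$, that is $f\equiv f(0)$. The two points needing care are: (a) checking, via Lemma~\ref{Lemma: Lyapunov-Schmidt reduction} and uniqueness of the reduction map $\Ups$, that the curves supplied by integrability really land in $Z$; and (b) the step from $Z$ meeting every direction to $Z$ having nonempty interior, which rests on the cited dimension property of analytic germs. Both are classical, and the whole statement is \cite[Lemma~1]{AdamsSimon} (see also \cite[Lemma~2.3]{EngelsteinSpolaorVelichkov}), to which I would refer for the remaining routine verifications.
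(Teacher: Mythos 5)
Your proof is correct and follows the standard argument of \cite[Lemma~1]{AdamsSimon} (see also \cite[Lemma~2.3]{EngelsteinSpolaorVelichkov}), which is precisely what the paper cites in lieu of giving a proof. The two substantive ingredients you isolate—passing from the family $\{A_s\}$ to a curve $P_K A_s$ in the critical set $Z=\{\nabla f=0\}$ via the uniqueness of the Lyapunov--Schmidt graph, and then the dimension estimate $\dim C_0 Z=\ell\Rightarrow\dim_0 Z=\ell$ for real-analytic germs followed by the identity principle—are exactly the content of the cited lemmas, and your handling of the normalization (working relative to $A_0^g$) correctly reflects how the paper uses the definition.
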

It is immediate from this lemma that in the proof of the log-epiperimetric inequality we can take $\gamma = 0$, thus obtaining an epiperimetric inequality. The geometric significance of being integrable for a connection $A_0$ is the following: $A_0$ has an integrable neighborhood in the moduli space of smooth Yang--Mills connections on the sphere $\mathbb{S}^{n - 1}$ with tangent space at $A_0$ being given by Jacobi fields at it, i.e., solutions of the linearised operator.

\section{Proof of the uniqueness of tangent cones with isolated singularities} \label{sec: proof of uniqueness}
As by the statement of Theorem \ref{Theorem: uniqueness of tangent connections with isolated singularities}, let $G$ be a compact matrix Lie group with Lie algebra $\g$. Let $n\ge 5$ and let $\Omega\subset\R^n$ be an open set. Let  $A\in (W^{1,2}\cap L^4)(\Omega,T^*\Omega\otimes\g)$ be either a $\operatorname{YM}$-energy minimizer or an $\omega$-ASD connection with respect to some smooth semicalibration $\omega$ on $\Omega$. Assume that $\mathscr{H}^{n-4}(\operatorname{Sing}(A)\cap K)<+\infty$ for every compact $K\subset\Omega$. Let $y\in\operatorname{Sing}(A)$. For every $\rho\in(0,\dist(y,\partial\Omega)/2)$, define $A_{y,\rho}\in (W^{1,2}\cap L^4)(B_2(0),\wedge^1B_2(0)\otimes\g)$ as
\begin{align*}
    A_{y,\rho}:=\tau_{y,\rho}^*A, 
\end{align*}
where $\tau_{y, \rho}(x) = \rho x + y$ is the usual rescaling of factor $\rho>0$ centered at $y$. Let $\varphi$ be a tangent cone for $A$ at $y$ which is smooth on $B_2(0)\smallsetminus\{0\}$ and such that there exists $\{\rho_i\}_{i\in\N}$ be satisfying $A_{y,\rho_i}\rightharpoonup\varphi$ weakly and $F_{A_{y,\rho_i}}\to F_{\varphi}$ strongly in $L^2$ as $i\to+\infty$ (modulo gauge transformations). Let $\varepsilon,\delta>0$ and $\gamma\in[0,1)$ be the constants given by Theorem \ref{Theorem: (log)-epiperimetric inequality for Yang--Mills functional} for $A_0=\iota_{\mathbb{S}^{n-1}}^*\varphi$. By the $\eps$-regularity statements in \cite[Theorem 2]{wentworth-chen} (see also \cite{tian-gauge-theory} for the energy minimizing case), following the same argument as in \cite[Section 3.15]{simon-book} we conclude that the convergence of $\{A_{y,\rho_i}\}_{i\in\N}$ to $\varphi$ is strong in $C^{\infty}$ (modulo gauge transformations) on every compact subset $K$ of $B_2(0)\smallsetminus\{0\}$. Hence, there exists $i\in\N$ big enough so that 
\begin{align*}
    \|A_{y,\rho_i}-\varphi\|_{C^{2,\alpha}(\mathbb{S}^{n-1})}\le \|A_{y,\rho_i}-\varphi\|_{C^{2,\alpha}(B_{\frac{3}{2}}(0)\smallsetminus B_{\frac{3}{4}}(0))}<\delta
\end{align*}
for every $0<\rho\le\rho_i$. Define $\tilde\rho:=\rho_i$. Fix any $k\in\N$. As in \cite[Lemma 3.3]{EngelsteinSpolaorVelichkov}, we know that for every $\rho\in[\tilde\rho/2^{k+1},\tilde\rho/2^k]$ it holds
\begin{align*}
    \|A_{y,\rho}-\varphi\|_{C^{2,\alpha}(\mathbb{S}^{n-1})}\le \|A_{y,\rho}-\varphi\|_{C^{2,\alpha}(B_{\frac{3}{2}}(0)\smallsetminus B_{\frac{3}{4}}(0))}<\delta
\end{align*}
Thus, by Theorem \ref{Theorem: (log)-epiperimetric inequality for Yang--Mills functional}, there exists $\hat A_{\rho}\in (W^{1,2}\cap L^4)(\mathbb{B}^n,T^*\mathbb{B}^n\otimes\g)$ such that
\begin{align*}
    \iota_{\mathbb{S}^{n-1}}^*\hat A_{\rho}=\iota_{\mathbb{S}^{n-1}}^*A_{y,\rho}
\end{align*}
and 
\begin{align*}
    \mathscr{Y}_{\mathbb{B}^n}(\hat A_{\rho}\,;\varphi)\le\big(1-\varepsilon\lvert\mathscr{Y}_{\mathbb{B}^n}(\tilde A_{\rho}\,;\varphi)\rvert^{\gamma}\big)\mathscr{Y}_{\mathbb{B}^n}(\tilde A_{\rho}\,;\varphi).
\end{align*}
Notice that, since $A_{y,\rho}$ is almost $\operatorname{YM}$-energy minimizing, for some $C_0,\alpha_0>0$ we have 
\allowdisplaybreaks
\begin{align}\label{Equation: estimate epiperimetric}
    \nonumber
    \Theta(\rho,y;A)-\operatorname{YM}_{\mathbb{B}^n}(\varphi)&=\operatorname{YM}_{\mathbb{B}^n}(A_{y,\rho})-\operatorname{YM}_{\mathbb{B}^n}(\varphi)\\
    \nonumber
    &\le\operatorname{YM}_{\mathbb{B}^n}(\hat A_{\rho})-\operatorname{YM}_{\mathbb{B}^n}(\varphi)+C_0\rho^{\alpha_0}\\
    \nonumber
    &=\mathscr{Y}_{\mathbb{B}^n}(\hat A_{\rho}\,;\varphi) + C_0 \rho^{\alpha_0}\\
    & \le \big(1-\varepsilon\lvert\mathscr{Y}_{\mathbb{B}^n}(\tilde u_{\rho}\,;\varphi)\rvert^{\gamma}\big)\mathscr{Y}_{\mathbb{B}^n}(\tilde A_{\rho}\,;\varphi)+C_0\rho^{\alpha_0}
\end{align}
for every $\rho\in(\tilde\rho/2^{k+1},\tilde\rho/2^k)$, where $\tilde A_{\rho}\in (W^{1,2}\cap L^4)(\mathbb{B}^n,T^*\mathbb{B}^n\otimes\g)$ is the $0$-homogeneous extension of $A_{y,\rho}$ inside $\mathbb{B}^n$.
Let
\allowdisplaybreaks
\begin{align*}
    f(\rho):=\rho^{n-4} \left(\Theta(\rho,y;A)-\operatorname{YM}_{\mathbb{B}^n}(\varphi) \right)=\int_{B_{\rho}(y)}\lvert F_A\rvert^2\, d\mathcal{L}^n-\operatorname{YM}_{\mathbb{B}^n}(\varphi)\rho^{n-4} \qquad\forall\,\rho\in[0,1).
\end{align*}
Notice that by Proposition \ref{Proposition: almost monotonicity formula} we have that $[0,1)\ni\rho\mapsto f(\rho)$ is an (almost) non-decreasing function of $\rho$. Hence, $f$ is differentiable $\mathcal{L}^1$-a.e. and its distributional derivative is a measure whose absolutely continous part (with respect to $\mathcal{L}^1$) coincides $\mathcal{L}^1$-a.e. with the classical differential and whose singular part is non negative. Thus, we have
\allowdisplaybreaks
\begin{align*}
    f'(\rho)&\ge\int_{\partial B_{\rho}(y)}\lvert F_A\rvert^2\, d\mathscr{H}^{n-1}-(n-4)\operatorname{YM}_{\mathbb{B}^n}(\varphi)\rho^{n-5}\\
    &=\rho^{n-1}\int_{\mathbb{S}^{n-1}}\lvert F_A(\rho\,\cdot\,+y)\rvert^2\, d\mathscr{H}^{n-1}(x)-(n-4)\operatorname{YM}_{\mathbb{B}^n}(\varphi)\rho^{n-5}\\
    &=\rho^{n-5}\Bigg(\int_{\mathbb{S}^{n-1}}\lvert \rho^2 F_A(\rho\,\cdot\,+y)\rvert^2\, d\mathscr{H}^{n-1}(x)-(n-4)\operatorname{YM}_{\mathbb{B}^n}(\varphi)\Bigg)\\
    &=\rho^{n-5}\Bigg(\int_{\mathbb{S}^{n-1}}\lvert F_{A_{y,\rho}}\rvert^2\, d\mathscr{H}^{n-1}(x)-(n-4)\operatorname{YM}_{\mathbb{B}^n}(\varphi)\Bigg)\\
    &=\rho^{n-5}(n-4)\big(\operatorname{YM}_{\mathbb{B}^n}(\tilde A_{\rho})-\operatorname{YM}_{\mathbb{B}^n}(\varphi)\big)\\
    &=\rho^{n-5}(n-4)\mathscr{Y}_{\mathbb{B}^n}(\tilde A_{\rho}\,;\varphi) \qquad\mbox{ for } \mathcal{L}^1\mbox{-a.e. } \rho\in(0,1),
\end{align*}
which can be rewritten as
\allowdisplaybreaks
\begin{align}\label{Equation: estimate epiperimetric 2}
    \rho^{n-4}\mathscr{Y}_{\mathbb{B}^n}(\tilde A_{\rho}\,;\varphi)\le\frac{\rho}{n-4}f'(\rho) \qquad\mbox{ for } \mathcal{L}^1\mbox{-a.e. } \rho\in(0,1).
\end{align}
By plugging \eqref{Equation: estimate epiperimetric 2} in \eqref{Equation: estimate epiperimetric} we get
\allowdisplaybreaks
\begin{align}\label{Equation: epiperimetric 3}
    \nonumber
    f(\rho)&=\rho^{n-4} \left(\Theta(\rho,y;A)-\operatorname{YM}_{\mathbb{B}^n}(\varphi)\right)\\
    \nonumber
    &\le\big(1-\varepsilon\lvert\mathscr{Y}_{\mathbb{B}^n}(\tilde A_{\rho}\,;\varphi)\rvert^{\gamma}\big)\rho^{n-4}\mathscr{Y}_{\mathbb{B}^n}(\tilde A_{\rho}\,;\varphi)+C_0\rho^{n-4+\alpha_0}\\
    &\le\big(1-\varepsilon\lvert\mathscr{Y}_{\mathbb{B}^n}(\tilde A_{\rho}\,;\varphi)\rvert^{\gamma}\big)\frac{\rho}{n-4}f'(\rho)+C_0\rho^{n-4+\alpha_0}, \qquad\mbox{ for } \mathcal{L}^1\mbox{-a.e. } \rho\in(\tilde\rho/2^{k+1},\tilde\rho/2^k).
\end{align}
Moreover, since $A_{y,\rho}$ is almost $\operatorname{YM}$-energy minimizing, we have
\allowdisplaybreaks
\begin{align}\label{Equation: epiperimetric 4}
    \nonumber
    e(\rho)&:=\frac{f(\rho)}{\rho^{n-4}}=\Theta(\rho,y;A)-\operatorname{YM}_{\mathbb{B}^n}(\varphi)\\
    \nonumber
    &=\frac{1}{\rho^{n-4}}\int_{B_{\rho}(y)}\lvert F_A\rvert^2\, d\mathcal{L}^n-\operatorname{YM}_{\mathbb{B}^n}(\varphi)\\
    \nonumber
    &=\operatorname{YM}_{\mathbb{B}^n}(A_{y,\rho})-\operatorname{YM}_{\mathbb{B}^n}(\varphi)\\
    \nonumber
    &\le\operatorname{YM}_{\mathbb{B}^n}(\tilde A_{\rho})-\operatorname{YM}_{\mathbb{B}^n}(\varphi)+C_0\rho^{\alpha_0}\\
    \nonumber
    &=\mathscr{Y}_{\mathbb{B}^{n}}(\tilde A_{\rho}\,;\varphi)+C_0\rho^{\alpha_0}\\
    &\le\mathscr{Y}_{\mathbb{B}^{n}}(\tilde A_{\rho}\,;\varphi)+C_0\rho^{\alpha_0}.
\end{align}
Hence, by combining \eqref{Equation: epiperimetric 3} and \eqref{Equation: epiperimetric 4} and letting $\tilde\eps=\eps/2$ we get
\allowdisplaybreaks
\begin{align*}
    f(\rho)\le\big(1-\tilde\varepsilon\lvert e(\rho)-C_0\rho^{\alpha_0}\rvert^{\gamma}\big)\frac{\rho}{n-2}f'(\rho)+C_0\rho^{n-4+\alpha_0}, \qquad\mbox{ for } \mathcal{L}^1\mbox{-a.e. } \rho\in(\tilde\rho/2^{k+1},\tilde\rho/2^k).
\end{align*}
Arguing as in \cite[Section 3.2, Step 1]{EngelsteinSpolaorVelichkov} we get
\begin{align*}
    e(\rho)\le 2\bigg(\hspace{-1.5mm}-\tilde\varepsilon C(n,\gamma)\log\bigg(\frac{\rho}{\tilde\rho}\bigg)\bigg)^{-\frac{1}{\gamma}} \qquad\forall\,\rho\in[\tilde\rho/2^{k+1},\tilde\rho/2^k],
\end{align*}
for some constant $C(n,\gamma)>0$ depending only on $n$ and $\gamma$. Since we have chosen $k\in\N$ arbitrarily and for every $\rho \in (0,\tilde\rho)$ there exists $k\in\N$ such that $\rho\in[\tilde\rho/2^{k+1},\tilde\rho/2^k]$, we have established that
\allowdisplaybreaks
\begin{align}\label{Equation: logarithmic decay of the energy density}
    \Theta(\rho,y;A)-\Theta(y\,;A)\le e(\rho)\le 2\bigg(\hspace{-1.5mm}-\tilde\varepsilon C(n,\gamma)\log\bigg(\frac{\rho}{\tilde\rho}\bigg)\bigg)^{-\frac{1}{\gamma}} \qquad\forall\,\rho\in(0,\tilde\rho).
\end{align}
The uniqueness of tangent map to $u$ at $y$ then follows directly by Proposition \ref{Proposition: Dini decrease implies uniqueness of tangent cone} 
with $\rho_0:=\tilde\rho/2$ and
\begin{align*}
    \phi(\rho):=2\bigg(\hspace{-1.5mm}-\tilde\varepsilon C(n,\gamma)\log\bigg(\frac{\rho}{\tilde\rho}\bigg)\bigg)^{-\frac{1}{\gamma}} \qquad\forall\,\rho\in(0,\tilde\rho/2).
\end{align*}
\section{Non-concentration cases: a Luckhaus type lemma for connections} \label{sec: Luckhaus}
In this section we deal with the possibility of concentration of measures. We start by proving the following \textit{Luckhaus type} lemma for Sobolev connections in dimension $n\ge 5$. 
\begin{lemma}[Luckhaus type lemma for connections]\label{Lemma: Luckhaus type lemma for connections}
     Let $G$ be a compact matrix Lie group with Lie algebra $\mathfrak{g}$. Let $n\ge 5$, $\rho\in(0,1)$ and $\lambda\in(0,1/2)$. Let $A_1\in W^{1,\frac{n-1}{2}}(\mathbb{S}_{\rho},T^*\mathbb{B}_\rho\otimes\mathfrak{g})$, $A_2\in W^{1,\frac{n-1}{2}}(\mathbb{S}_{(1-\lambda)\rho},T^*\mathbb{B}_{(1-\lambda)\rho}\otimes\mathfrak{g})$.
     \newline
     Then, there exists $A_{\lambda}\in W^{1,\frac{n}{2}}(\mathbb{B}_{\rho}\smallsetminus\mathbb{B}_{(1-\lambda)\rho},T^*(\mathbb{B}_{\rho}\smallsetminus\mathbb{B}_{(1-\lambda)\rho})\otimes\mathfrak{g})$ such that $A_{\lambda}|_{\mathbb{S}_{\rho}}=A_1$, $A_{\lambda}|_{\mathbb{S}_{(1-\lambda)\rho}}=A_2$ and for some constant $K>0$ we have 
    \begin{align*}
        \int_{\mathbb{B}_{\rho}\smallsetminus\mathbb{B}_{(1-\lambda)\rho}}\lvert F_{A_{\lambda}}\rvert^{2}\, d\mathcal{L}^n&\le K\lambda^{\frac{n-4}{n}}\bigg(\int_{\mathbb{S}_{\rho}}\big(\lvert\nabla A_1\rvert^\frac{n-1}{2}+\lvert A_1\rvert^{n-1}\big)\, d\mathscr{H}^{n-1}\\
        &\quad+\int_{\mathbb{S}_{(1-\lambda)\rho}}\big(\lvert\nabla A_2\rvert^\frac{n-1}{2}+\lvert A_2\rvert^{n-1}\big)\, d\mathscr{H}^{n-1}\bigg)^{\frac{4}{n}}.
    \end{align*}
    \begin{proof}
        Recall the continuous Sobolev embeddings 
        \begin{align}\label{Equation: Sobolev embeddings}
        \begin{split}
            W^{1,\frac{n-1}{2}}(\mathbb{S}_{\rho})\hookrightarrow W^{1-\frac{2}{n},\frac{n}{2}}(\mathbb{S}_{\rho}) \qquad \text{and} \qquad W^{1,\frac{n-1}{2}}(\mathbb{S}_{(1-\lambda)\rho})\hookrightarrow W^{1-\frac{2}{n},\frac{n}{2}}(\mathbb{S}_{(1-\lambda)\rho}).
        \end{split}
        \end{align}
        Let $\tilde A_1\in W^{1,2}(\mathbb{B}_{\rho},T^*\mathbb{B}_{\rho}\otimes\mathfrak{g})$ and $\tilde A_2\in W^{1,2}(\mathbb{B}_{(1-\lambda)\rho},T^*\mathbb{B}_{(1-\lambda)\rho}\otimes\mathfrak{g})$ be componentwise harmonic extensions of $A_1$ and $A_2$ respectively. By \eqref{Equation: Sobolev embeddings} and standard elliptic regularity theory, we have 
        \begin{align*}
            \tilde A_1\in W^{1,\frac{n}{2}}(\mathbb{B}_{\rho},T^*\mathbb{B}_{\rho}\otimes\mathfrak{g}) \qquad \text{and} \qquad \tilde A_2\in W^{1,\frac{n}{2}}(\mathbb{B}_{(1-\lambda)\rho},T^*\mathbb{B}_{(1-\lambda)\rho}\otimes\mathfrak{g})
        \end{align*}
        with the following estimates
        \begin{align*}
            \int_{\mathbb{B}_{\rho}}\big(\lvert\nabla\tilde A_1\rvert^\frac{n}{2}+\lvert\tilde A_1\rvert^{n}\big)\, d\mathcal{L}^{n}&\le K\int_{\mathbb{S}_{\rho}}\big(\lvert\nabla A_1\rvert^\frac{n-1}{2}+\lvert A_1\rvert^{n-1}\big)\, d\mathscr{H}^{n-1}\\
            \int_{\mathbb{B}_{(1-\lambda)\rho}}\big(\lvert\nabla\tilde A_2\rvert^\frac{n}{2}+\lvert\tilde A_2\rvert^{n}\big)\, d\mathcal{L}^{n}&\le K\int_{\mathbb{S}_{(1-\lambda)\rho}}\big(\lvert\nabla A_2\rvert^\frac{n-1}{2}+\lvert A_2\rvert^{n-1}\big)\, d\mathscr{H}^{n-1}.
        \end{align*}
        Define $\hat A_{2}\in W^{1,\frac{n}{2}}(\mathbb{B}_{\rho}\smallsetminus\mathbb{B}_{(1-\lambda)\rho},T^*(\mathbb{B}_{\rho}\smallsetminus\mathbb{B}_{(1-\lambda)\rho})\otimes\mathfrak{g})$ by 
        \begin{align*}
            \hat A_2:=\bigg(\frac{((1-\lambda)\rho)^2}{\lvert\, \cdot\,\rvert^2}\,\cdot\bigg)^*\tilde A_2 \qquad\mbox{ on }\,\mathbb{B}_{\rho}\smallsetminus\mathbb{B}_{(1-\lambda)\rho}.
        \end{align*}
        Notice that $\hat A_2|_{\mathbb{S}_{(1-\lambda)\rho}}=A_2$ and 
        \begin{align*}
            \int_{\mathbb{B}_{\rho}\smallsetminus B_{(1-\lambda)\rho}}\big(\lvert\nabla\hat A_2\rvert^\frac{n}{2}+\lvert\hat A_2\rvert^{n}\big)\, d\mathcal{L}^{n}&\le \int_{\mathbb{B}_{(1-\lambda)\rho}}\big(\lvert\nabla\tilde A_2\rvert^\frac{n}{2}+\lvert\tilde A_2\rvert^{n}\big)\, d\mathcal{L}^{n}\\
            &\le K\int_{\mathbb{S}_{(1-\lambda)\rho}}\big(\lvert\nabla A_2\rvert^\frac{n-1}{2}+\lvert A_2\rvert^{n-1}\big)\, d\mathscr{H}^{n-1}.
        \end{align*}
        Define $\varphi_{\lambda}:\mathbb{R}^n\to\mathbb{R}^n$ by 
        \begin{align*}
            \varphi_{\lambda}(x):=\bigg(\frac{\lvert x\rvert}{\lambda\rho}-\frac{1-\lambda}{\lambda}\bigg)x \qquad\forall\, x\in\mathbb{R}^n.
        \end{align*}
        Define $A_{\lambda}\in W^{1,\frac{n}{2}}(\mathbb{B}_{\rho}\smallsetminus\mathbb{B}_{(1-\lambda)\rho},T^*(\mathbb{B}_{\rho}\smallsetminus\mathbb{B}_{(1-\lambda)\rho})\otimes\mathfrak{g})$ by 
        \begin{align*}
            A_{\lambda}:=\varphi_{\lambda}^*\tilde A_1+(\,\cdot\,-\varphi_{\lambda})^*\hat A_2 \qquad\mbox{ on }\, \mathbb{B}_{\rho}\smallsetminus\mathbb{B}_{(1-\lambda)\rho}.
        \end{align*}
        Notice that since all the norms involved are conformally invariant and $\varphi_{\lambda}$ is a conformal map, we have 
        \begin{align*}
            \int_{\mathbb{B}_{\rho}\smallsetminus\mathbb{B}_{(1-\lambda)\rho}}\lvert F_{A_{\lambda}}\rvert^{\frac{n}{2}}\, d\mathcal{L}^n&\le K\bigg(\int_{\mathbb{B}_{\rho}\smallsetminus\mathbb{B}_{(1-\lambda)\rho}}\lvert d\tilde A_1\rvert^{\frac{n}{2}}\, d\mathcal{L}^n+\int_{\mathbb{B}_{\rho}\smallsetminus\mathbb{B}_{(1-\lambda)\rho}}\lvert d\hat A_2\rvert^{\frac{n}{2}}\, d\mathcal{L}^n\\
            &\quad+\int_{\mathbb{B}_{\rho}\smallsetminus\mathbb{B}_{(1-\lambda)\rho}}\lvert \tilde A_1\rvert^{n}\, d\mathcal{L}^n+\int_{\mathbb{B}_{\rho}\smallsetminus\mathbb{B}_{(1-\lambda)\rho}}\lvert\hat A_2\rvert^{n}\, d\mathcal{L}^n\\
            &\quad+2\int_{\mathbb{B}_{\rho}\smallsetminus\mathbb{B}_{(1-\lambda)\rho}}\lvert\tilde A_1\rvert^{\frac{n}{2}}\cdot\lvert\hat A_2\rvert^{\frac{n}{2}}\, d\mathcal{L}^n\bigg)\\
            &\le K\bigg(\int_{\mathbb{S}_{\rho}}\big(\lvert\nabla A_1\rvert^\frac{n-1}{2}+\lvert A_1\rvert^{n-1}\big)\, d\mathscr{H}^{n-1}\\
            &\quad+\int_{\mathbb{S}_{(1-\lambda)\rho}}\big(\lvert\nabla A_2\rvert^\frac{n-1}{2}+\lvert A_2\rvert^{n-1}\big)\, d\mathscr{H}^{n-1}\bigg).
        \end{align*}
        Since by H\"older inequality we have 
        \begin{align*}
            \int_{\mathbb{B}_{\rho}\smallsetminus\mathbb{B}_{(1-\lambda)\rho}}\lvert F_{A_{\lambda}}\rvert^{2}\, d\mathcal{L}^n\le K\lambda^{\frac{n-4}{n}}\bigg(\int_{\mathbb{B}_{\rho}\smallsetminus\mathbb{B}_{(1-\lambda)\rho}}\lvert F_{A_{\lambda}}\rvert^\frac{n}{2}\, d\mathcal{L}^n\bigg)^{\frac{4}{n}},
        \end{align*}
        the statement follows.
    \end{proof}
\end{lemma}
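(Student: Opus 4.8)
The plan is to construct $A_\lambda$ explicitly by "interpolating" between the two boundary connections through harmonic extensions, exactly as in the classical Luckhaus lemma for maps, but keeping track of the conformal invariance of the relevant norms. First I would extend $A_1$ (defined on the sphere $\mathbb{S}_\rho$) componentwise into the whole ball $\mathbb{B}_\rho$ by harmonic extension, and likewise extend $A_2$ into $\mathbb{B}_{(1-\lambda)\rho}$; call these $\tilde A_1$ and $\tilde A_2$. The Sobolev embeddings $W^{1,\frac{n-1}{2}}(\mathbb{S}_r)\hookrightarrow W^{1-\frac{2}{n},\frac{n}{2}}(\mathbb{S}_r)$ together with standard elliptic regularity upgrade these to $W^{1,\frac{n}{2}}$ extensions with the quantitative bounds
\begin{align*}
    \int_{\mathbb{B}_\rho}\big(\lvert\nabla\tilde A_1\rvert^{n/2}+\lvert\tilde A_1\rvert^n\big)\,d\mathcal{L}^n\le K\int_{\mathbb{S}_\rho}\big(\lvert\nabla A_1\rvert^{(n-1)/2}+\lvert A_1\rvert^{n-1}\big)\,d\mathscr{H}^{n-1},
\end{align*}
and similarly for $\tilde A_2$. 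Then I would pull $\tilde A_2$ back by the inversion $x\mapsto \frac{((1-\lambda)\rho)^2}{\lvert x\rvert^2}x$ to obtain $\hat A_2$ defined on the \emph{outer} region $\mathbb{B}_\rho\smallsetminus\mathbb{B}_{(1-\lambda)\rho}$, which still restricts to $A_2$ on $\mathbb{S}_{(1-\lambda)\rho}$ and inherits the same estimate because the $W^{1,n/2}$ and $L^n$ norms of $1$-forms are conformally invariant in dimension $n$.

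Next I would define the "tent map" $\varphi_\lambda(x)=\big(\frac{\lvert x\rvert}{\lambda\rho}-\frac{1-\lambda}{\lambda}\big)x$ on the annulus, whose role is: at $\lvert x\rvert=\rho$ it is the identity, and at $\lvert x\rvert=(1-\lambda)\rho$ it collapses to $0$. Using $\varphi_\lambda$ (and its "complement" $x-\varphi_\lambda$, which does the reverse), I set
\begin{align*}
    A_\lambda := \varphi_\lambda^*\tilde A_1 + (\,\cdot\, - \varphi_\lambda)^*\hat A_2 \qquad\text{on }\mathbb{B}_\rho\smallsetminus\mathbb{B}_{(1-\lambda)\rho}.
\end{align*}
One checks the boundary conditions $A_\lambda|_{\mathbb{S}_\rho}=A_1$ (there $\varphi_\lambda=\mathrm{id}$, and the second term vanishes since $x-\varphi_\lambda\equiv 0$ on $\mathbb{S}_\rho$, modulo verifying it pulls back $\hat A_2$ to something trivial there) and $A_\lambda|_{\mathbb{S}_{(1-\lambda)\rho}}=A_2$ symmetrically. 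Then I would expand $F_{A_\lambda}$: since $F_{A_\lambda}=dA_\lambda + A_\lambda\wedge A_\lambda$, and $\varphi_\lambda,(\,\cdot\,-\varphi_\lambda)$ are maps (so pullback commutes with $d$), the curvature is controlled by $\lvert d\tilde A_1\rvert$, $\lvert d\hat A_2\rvert$, $\lvert\tilde A_1\rvert^2$, $\lvert\hat A_2\rvert^2$, and the cross term $\lvert\tilde A_1\rvert\lvert\hat A_2\rvert$, each pulled back by a map. The key point is that the map $\varphi_\lambda$, though not conformal in the strict sense, has controlled derivative on the annulus — actually the paper asserts $\varphi_\lambda$ is conformal, and one should exploit that the combination of norms $\lvert\nabla\cdot\rvert^{n/2}+\lvert\cdot\rvert^n$ is conformally invariant — so that
\begin{align*}
    \int_{\mathbb{B}_\rho\smallsetminus\mathbb{B}_{(1-\lambda)\rho}}\lvert F_{A_\lambda}\rvert^{n/2}\,d\mathcal{L}^n \le K\bigg(\int_{\mathbb{S}_\rho}\big(\lvert\nabla A_1\rvert^{(n-1)/2}+\lvert A_1\rvert^{n-1}\big)\,d\mathscr{H}^{n-1} + \int_{\mathbb{S}_{(1-\lambda)\rho}}\big(\lvert\nabla A_2\rvert^{(n-1)/2}+\lvert A_2\rvert^{n-1}\big)\,d\mathscr{H}^{n-1}\bigg).
\end{align*}
Finally, since $\lvert\mathbb{B}_\rho\smallsetminus\mathbb{B}_{(1-\lambda)\rho}\rvert\sim \lambda\rho^n$, applying Hölder's inequality with exponents $\frac{n}{4}$ and $\frac{n}{n-4}$ on the annulus of measure $\sim\lambda$ gives the factor $\lambda^{\frac{n-4}{n}}$ and converts the $L^{n/2}$ bound into the claimed $L^2$ bound on $F_{A_\lambda}$.

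\textbf{Expected main obstacle.} The delicate step is the curvature expansion under pullback by $\varphi_\lambda$: one must verify that the quadratic term $A_\lambda\wedge A_\lambda$ does not introduce uncontrolled factors of $\lambda^{-1}$ coming from $\lvert\nabla\varphi_\lambda\rvert$ blowing up near the inner boundary, and that the conformal-invariance bookkeeping genuinely applies to the \emph{sum} $dA+A\wedge A$ rather than to $dA$ and $A\wedge A$ separately (for $1$-forms in dimension $n$, $\lvert dA\rvert^{n/2}$ and $\lvert A\rvert^n\,d\mathcal{L}^n$ are the conformally weighted quantities). A careful change-of-variables argument, using that $\varphi_\lambda$ maps the annulus $\{(1-\lambda)\rho<\lvert x\rvert<\rho\}$ essentially conformally onto $\mathbb{B}_\rho$ with the scaling factor built so the conformal weight of a $1$-form is preserved, should close this; this is precisely where the exponent $\frac{n-1}{2}$ on the boundary and $\frac{n}{2}$ in the interior are forced, via the trace/Sobolev embedding $W^{1,\frac{n-1}{2}}(\mathbb{S}^{n-1})\hookrightarrow W^{1-2/n,n/2}$.
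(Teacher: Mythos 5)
Your proposal reproduces the paper's proof step for step: componentwise harmonic extensions upgraded to $W^{1,n/2}$ via the fractional trace embedding $W^{1,\frac{n-1}{2}}(\mathbb{S}_r)\hookrightarrow W^{1-\frac{2}{n},\frac{n}{2}}(\mathbb{S}_r)$, inversion to push $\tilde A_2$ onto the outer shell, the radial ``tent map'' $\varphi_\lambda$ to glue, the $L^{n/2}$ curvature estimate, and finally H\"older on the thin annulus to produce $\lambda^{\frac{n-4}{n}}$. The concern you raise about $\varphi_\lambda$ being conformal is worth noting since it is also present in the paper: a radial map $x\mapsto \psi(|x|)x$ is conformal only when $\psi$ is constant (dilation) or $\psi(|x|)\propto |x|^{-2}$ (inversion), whereas here $\psi$ is affine in $|x|$, so the radial scaling $g'(r)$ and the tangential scaling $f(r)=\psi(r)$ differ; the intended estimate $\int_{\text{ann}}\lvert\varphi_\lambda^*d\tilde A_1\rvert^{n/2}\lesssim\int\lvert d\tilde A_1\rvert^{n/2}$ therefore cannot be read off from strict conformal invariance and requires a direct Jacobian bookkeeping (tracking the factors $g'(r)$ versus $f(r)$ in both the pullback and the area element), which neither you nor the paper spell out. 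Since your argument tracks the paper's faithfully and shares that unstated step rather than diverging from it, I view this as essentially the same proof with the same unresolved point rather than a genuinely different route.
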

The proofs of Theorem \ref{Theorem: higher dimensions} and Corollary \ref{Corollary: dimension 5} follow directly from the following non-concentration lemma for almost $\operatorname{YM}$-energy minimizers in arbitrary dimension.
\begin{lemma}\label{Lemma: no concentration}
    Let $G$ be a compact matrix Lie group with Lie algebra $\mathfrak{g}$. Let $n\ge 5$ and let $\Omega\subset\mathbb{R}^n$ be an open set. Let $y\in\Omega$ and let $A\in (W_{loc}^{1,\frac{n-1}{2}}\cap L_{loc}^{n-1})(\Omega\smallsetminus\{y\},T^*(\Omega\smallsetminus\{y\})\otimes\g)$ be such that 
    \begin{align*}
        \int_{\Omega}\lvert F_A\rvert^{\frac{n-1}{2}}\, d\mathcal{L}^n<+\infty.
    \end{align*}
    Assume that $A$ is an almost $\operatorname{YM}$-minimizing connection with $y\in\operatorname{Sing}(A)$. Assume that $\{\rho_i\}_{i\in\N}$ is such that $\rho_i\to 0 $, $A_{y,\rho_i}:=\tau_{y,\rho_i}^*A\rightharpoonup\varphi$ weakly and $F_{A_{y,\rho_i}}\to F_{\varphi}$ weakly in $L^{\frac{n-1}{2}}(\mathbb{B}^n)$. Assume moreover that $\,\operatorname{Sing}(\varphi)=\{0\}$. Then we have $F_{A_{y,\rho_i}}\to F_{\varphi}$ strongly in $L^2(\mathbb{B}^n)$ as $i\to +\infty$.
    \begin{proof}
        First notice that, since for every $i\in\N$ we have $F_{A_{y,\rho_i}}\in L^\frac{n-1}{2}(\mathbb{B}^n)$, for every $i\in\N$ there exists $\delta_i\in(0,\frac{1}{2})$ such that $\delta_i\to 0$ as $i\to+\infty$ and
        \begin{align*}
            \int_{\mathbb{B}^n\smallsetminus\mathbb{B}_{1-\delta_i}}\big(\lvert F_{A_{y,\rho_i}}\rvert^{\frac{n-1}{2}}+\lvert F_{\varphi}\rvert^{\frac{n-1}{2}}\big)\, d\mathcal{L}^n<\eps_G,
        \end{align*}
        where $\eps_G>0$ is the constant given by Uhlenbeck's gauge extraction theorem \cite{uhlenbeck-connections}. By Fubini's theorem, for every $i\in\N$ there exists $r_i\in(1-\frac{\delta_i}{2},1)$ such that $A_{y,\rho_i}\in W^{1,\frac{n-1}{2}}(\mathbb{S}_{r_i})$ and
        \begin{align*}
            \int_{\mathbb{S}_{r_i}}\rvert F_{A_{y,\rho_i}}\rvert^{\frac{n-1}{2}}\, d\mathscr{H}^{n-1}<\eps_G,
        \end{align*}
        Moreover, for every $i\in\N$ there exists $\lambda_i\in(0,\frac{\delta_i}{2})$ such that
        \begin{align*}
            \int_{\mathbb{S}_{(1-\lambda_i)r_i}}\lvert F_{\varphi}\rvert^{\frac{n-1}{2}}\, d\mathscr{H}^{n-1}<\eps_G.
        \end{align*}
        Let
        \begin{align*}
            \varphi_i:=\bigg(\frac{\cdot}{1-\lambda_i}\bigg)^*\varphi \qquad\mbox{ on } B_{(1-\lambda_i)r_i}.
        \end{align*}
        for every $i\in\N$. By Uhlenbeck's Coulomb gauge extraction theorem \cite{uhlenbeck-connections}, for every $i\in\N$ there exists $g_i\in W^{2,\frac{n-1}{2}}(\mathbb{S}_{r_i},G)$ and $h_i\in W^{2,\frac{n-1}{2}}(\mathbb{S}_{(1-\lambda_i)r_i},G)$ such that 
        \begin{align*}
            \int_{\mathbb{S}_{r_i}}\big(\lvert\nabla A_{y,\rho_i}^{g_i}\rvert^\frac{n-1}{2}+\lvert A_{y,\rho_i}^{g_i}\rvert^{n-1}\big)\, d\mathscr{H}^{n-1}\le C_G\int_{\mathbb{S}_{r_i}}\rvert F_{A_{y,\rho_i}}\rvert^{\frac{n-1}{2}}\, d\mathscr{H}^{n-1}< C_G\eps_G
        \end{align*}
        and 
        \begin{align*}
            \int_{\mathbb{S}_{r_i}}\big(\lvert\nabla\varphi_i^{h_i}\rvert^\frac{n-1}{2}+\lvert\varphi_i^{h_i}\rvert^{n-1}\big)\, d\mathscr{H}^{n-1}\le C_G\int_{\mathbb{S}_{r_i}}\rvert F_{\varphi_i}\rvert^{\frac{n-1}{2}}\, d\mathscr{H}^{n-1}< C_G\eps_G,
        \end{align*}
        where $C_G>0$ is a constant depending only on $G$. By Lemma \ref{Lemma: Luckhaus type lemma for connections}, for every $i\in\N$ there exists $A_{\lambda_i}\in W^{1,\frac{n}{2}}(\mathbb{B}_{r_i}\smallsetminus\mathbb{B}_{(1-\lambda_i)r_i},T^*(\mathbb{B}_{r_i}\smallsetminus\mathbb{B}_{(1-\lambda_i)r_i})\otimes\mathfrak{g})$ such that $A_{\lambda_i}|_{\mathbb{S}_{r_i}}=A_{y,\rho_i}^{g_i}|_{\mathbb{S}_{r_i}}$, $A_{\lambda_i}|_{\mathbb{S}_{(1-\lambda_i)r_i}}=\varphi_i^{h_i}|_{\mathbb{S}_{(1-\lambda_i)r_i}}$ and for some universal $K>0$ we have 
        \begin{align*}
            \int_{\mathbb{B}_{r_i}\smallsetminus\mathbb{B}_{(1-\lambda_i)r_i}}\lvert F_{A_{\lambda_i}}\rvert^2\, d\mathcal{L}^n&\le KC_G\eps_G\lambda_i^{\frac{n-4}{n}}
        \end{align*}
        for some constant $K>0$ depending only on $n$. Let 
        \begin{align*}
            \tilde h_i:=\bigg((1-\lambda_i)r_i\frac{\cdot}{\lvert\,\cdot\,\rvert}\bigg)^*h_i\in W^{2,\frac{n-1}{2}}(\mathbb{B}_{(1-\lambda_i)r_i},G) \qquad\forall\,i\in\N
        \end{align*}
        and define $\tilde A_i\in W^{1,\frac{n}{2}}(\mathbb{B}_{r_i},T^*\mathbb{B}_{r_i}\otimes\g)$ by 
        \begin{align*}
            \tilde A_i=\begin{cases} \varphi_i^{\tilde h_i} & \mbox{ on } \mathbb{B}_{(1-\lambda_i)r_i}\vspace{1mm}\\
                A_{\lambda_i} & \mbox{ on } \mathbb{B}_{r_i}\smallsetminus\mathbb{B}_{(1-\lambda_i)r_i}.
            \end{cases}
        \end{align*}
        Let 
        \begin{align*}
            \tilde g_i:=\bigg(r_i\frac{\cdot}{\lvert\,\cdot\,\rvert}\bigg)^*g_i\in W^{2,\frac{n-1}{2}}(\mathbb{B}_{r_i},G) \qquad\forall\,i\in\N
        \end{align*}
        Then, by almost $\operatorname{YM}$-minimality of $A$, we have
        \begin{align*}
            \int_{\mathbb{B}^n}\lvert F_{\varphi}\rvert^{2}\, d\mathcal{L}^n&\le\liminf_{i\to+\infty}\int_{\mathbb{B}^n}\lvert F_{A_{y,\rho_i}}\rvert^2\, d\mathcal{L}^n=\liminf_{i\to+\infty}\int_{\mathbb{B}_{r_i}}\big\lvert F_{A_{y,\rho_i}^{\tilde g_i}}\big\rvert^2\, d\mathcal{L}^n\\
            &\le\liminf_{i\to+\infty}\bigg(\int_{\mathbb{B}_{r_i}}\lvert F_{\tilde A_i}\rvert^2\, d\mathcal{L}^n+C\rho_i^{\alpha}\bigg)\\
            &\le\liminf_{i\to+\infty}\bigg(\int_{\mathbb{B}_{(1-\lambda_i)r_i}}\big\lvert F_{\varphi_i^{\tilde h_i}}\big\rvert^2\, d\mathcal{L}^n+\int_{\mathbb{B}_{r_i}\smallsetminus\mathbb{B}_{(1-\lambda_i)r_i}}\lvert F_{A_{\lambda_i}}\rvert^2\, d\mathcal{L}^n\bigg)\\
            &\le\liminf_{i\to+\infty}\bigg((1-\lambda_i)^{n-4}\int_{\mathbb{B}_{r_i}}\lvert F_\varphi\rvert^2\, d\mathcal{L}^n+KC_G\eps_G\lambda_i^{\frac{n-4}{n}}\bigg)\\
            &=\int_{\mathbb{B}^n}\lvert F_{\varphi}\rvert^2\, d\mathcal{L}^n.
        \end{align*}
        The statement follows. 
    \end{proof}
\end{lemma}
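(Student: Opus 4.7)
The plan is to construct, for each large $i$, a competitor connection $\tilde A_i$ on a ball $\mathbb{B}_{r_i}$ with $r_i$ close to $1$, such that $\tilde A_i$ agrees (after suitable gauges) with $A_{y,\rho_i}$ on $\partial \mathbb{B}_{r_i}$ and equals a slight dilation of $\varphi$ on a smaller concentric ball. Almost $\mathrm{YM}$-minimality will then bound $\int_{\mathbb{B}^n} |F_{A_{y,\rho_i}}|^2$ from above by a quantity converging to $\int_{\mathbb{B}^n} |F_{\varphi}|^2$, while weak lower semicontinuity in $L^2$ yields the reverse inequality. Upgrading the resulting norm convergence to strong $L^2$ convergence uses the uniform $L^2$ bound on $F_{A_{y,\rho_i}}$ supplied by almost monotonicity (Proposition \ref{Proposition: almost monotonicity formula}) together with the given weak $L^{(n-1)/2}$ convergence (recall $(n-1)/2\ge 2$ on a bounded domain for $n\ge 5$).

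To set up the construction, use the finiteness of $\int_{\Omega} |F_A|^{(n-1)/2}$ to choose $\delta_i \downarrow 0$ for which the $L^{(n-1)/2}$ curvature mass of both $A_{y,\rho_i}$ and $\varphi$ on $\mathbb{B}^n\smallsetminus\mathbb{B}_{1-\delta_i}$ falls below the Uhlenbeck threshold $\varepsilon_G$. A Fubini argument then produces $r_i \in (1-\delta_i/2,\, 1)$ along which $A_{y,\rho_i}$ has $W^{1,(n-1)/2}$ trace with small curvature on $\mathbb{S}_{r_i}$, and similarly $\lambda_i \in (0, \delta_i/2)$ along which the rescaled cone $\varphi_i := ((1-\lambda_i)^{-1}\,\cdot\,)^*\varphi$ has small curvature on $\mathbb{S}_{(1-\lambda_i)r_i}$. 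Uhlenbeck's Coulomb gauge theorem then supplies gauges $g_i$ on $\mathbb{S}_{r_i}$ and $h_i$ on $\mathbb{S}_{(1-\lambda_i)r_i}$ putting $A_{y,\rho_i}^{g_i}$ and $\varphi_i^{h_i}$ into Coulomb form with full $W^{1,(n-1)/2}$ norms of order $C_G \varepsilon_G$.

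The heart of the argument is the interpolation in the annulus $\mathbb{B}_{r_i}\smallsetminus\mathbb{B}_{(1-\lambda_i)r_i}$: apply Lemma \ref{Lemma: Luckhaus type lemma for connections} to the two Coulomb gauges to produce a connection $A_{\lambda_i}$ in the annulus with matching boundary traces and $L^2$ curvature bounded by $K C_G \varepsilon_G\, \lambda_i^{(n-4)/n}$. Extending $h_i$ radially to $\tilde h_i$ and gluing $\tilde A_i := \varphi_i^{\tilde h_i}$ on $\mathbb{B}_{(1-\lambda_i)r_i}$ with $A_{\lambda_i}$ on the annulus yields a global connection on $\mathbb{B}_{r_i}$ whose boundary values on $\mathbb{S}_{r_i}$ agree with those of $A_{y,\rho_i}$ in the radially-extended gauge $\tilde g_i$. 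Almost $\mathrm{YM}$-minimality of $A$ then gives
\begin{equation*}
\int_{\mathbb{B}^n}|F_{A_{y,\rho_i}}|^2 \leq \int_{\mathbb{B}_{r_i}} |F_{\tilde A_i}|^2 + C\rho_i^\alpha \leq (1-\lambda_i)^{n-4}\!\int_{\mathbb{B}^n}|F_\varphi|^2 + K C_G \varepsilon_G\, \lambda_i^{(n-4)/n} + C\rho_i^\alpha,
\end{equation*}
so $\limsup_i \int|F_{A_{y,\rho_i}}|^2 \leq \int|F_\varphi|^2$. Combined with the reverse inequality from weak lower semicontinuity, the $L^2$ norms converge, and together with weak $L^2$ convergence (extracted from the uniform $L^2$ bound and the uniqueness of the weak limit) this delivers the strong $L^2$ convergence.

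The main obstacle is the annular interpolation: one needs a connection whose $L^2$ curvature genuinely vanishes as $\lambda_i \to 0$, built only from scale-invariant $W^{1,(n-1)/2}$ information on the boundary traces. This is exactly the content of Lemma \ref{Lemma: Luckhaus type lemma for connections}, whose proof exploits the trace-level embedding $W^{1,(n-1)/2} \hookrightarrow W^{1-2/n,\, n/2}$ on $(n-1)$-spheres. It is also the source of the regularity hypothesis $A \in W^{1,(n-1)/2}_{\mathrm{loc}}\cap L^{n-1}_{\mathrm{loc}}$, which in the critical dimension $n=5$ coincides with the natural class $W^{1,2}\cap L^4$ of Theorem \ref{Theorem: uniqueness of tangent connections with isolated singularities} and thereby yields Corollary \ref{Corollary: dimension 5}.
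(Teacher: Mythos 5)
Your proposal reproduces the paper's argument step for step: the choice of $\delta_i$, the Fubini selection of $r_i$ and $\lambda_i$, Uhlenbeck's Coulomb gauges on the two spheres, the Luckhaus-type interpolation in the thin annulus, the radially extended gauges and the gluing to a competitor, and the same energy-comparison chain via almost $\operatorname{YM}$-minimality. You merely make explicit the final step (norm convergence plus weak $L^2$ convergence implies strong $L^2$ convergence) that the paper leaves implicit, so the approach is the same.
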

\begin{remark}
    Let $n\ge 5$, let $\Omega\subset\mathbb{R}^n$ be open and $y\in\Omega$. Let $A\in C^{\infty}(\Omega\smallsetminus\{y\},T^*(\Omega\smallsetminus\{y\})\otimes\g)$ is a stationary Yang--Mills connection on $\Omega\smallsetminus\{y\}$. Notice that the uniform pointwise curvature bound
    \begin{align}\label{Equation: assumption Yang}
        \lvert F_A\rvert\le\frac{C}{\lvert\,\cdot\,-y\rvert^2} \qquad\mbox{ on }\, \Omega\smallsetminus\{y\}
    \end{align}
    for some $C>0$. Arguing by gluing of gauges on intersecting annuli around $y$ with constant conformal factor as in \cite[Proof of Theorem V.6]{riviere-variations}\footnote{See also the original argument in \cite{uhlenbeck-removable}.}, we can show that there exists $\rho>0$ and $g\in W^{2,\frac{n}{2}}_{loc}(B_{\rho}(y),G)$ such that $A^g$ satisfies
    \begin{align*}
        \lvert \nabla A^g\rvert\le\frac{C}{\lvert\,\cdot\,-y\rvert^2} \qquad\mbox{ on }\, B_{\rho}(y)\smallsetminus\{y\}
    \end{align*}
    for some $C>0$. This immediately implies that $A^g\in W^{1,(\frac{n}{2},\infty)}(B_{\rho}(y))$. Therefore, the assumption in \eqref{Equation: assumption Yang} used in \cite{Yang} is strictly stronger than the ones of Lemma \ref{Lemma: no concentration}.
\end{remark}
\appendix
\section{A criterion for the uniqueness of tangent cones} \label{sec: appendix criterion} 
The aim of this last section is to prove a standard argument in the literature allowing us to infer uniqueness of tangent cone to an almost $\operatorname{YM}$-energy minimizing Yang--Mills connection $A$ at some point $y$ from a sufficiently fast decay of the energy density $\Theta(\rho,y;A)$ to its limit $\Theta(y;A)$, usually referred to as \textit{Dini continuity}. We reproduce the argument here for the sake of completeness. 
\begin{proposition}\label{Proposition: Dini decrease implies uniqueness of tangent cone}
    Let $G$ be a compact matrix Lie group with Lie algebra $\mathfrak{g}$. Let $n\ge 5$ and let $\Omega\subset\R^n$ be an open set. Let $A\in (W^{1,2}\cap L^4)(\Omega,T^*\Omega\otimes\mathfrak{g})$ be an almost $\operatorname{YM}$-energy minimizing connection on $\Omega$. Assume that $y\in\operatorname{Sing}(A)$ is an isolated singularity for $A$ and that there exist $\rho_0\in(0,\dist(y,\partial\Omega))$ and a non-decreasing function $\phi\in(0,\rho_0)\to(0,+\infty)$ such that 
    \begin{align}\label{Equation: assumptions bound}
        e(\rho):=\Theta(\rho,y;A)-\Theta(y;A)\le\phi(\rho) \qquad\forall\,\rho\in(0,\rho_0)
    \end{align}
    and 
    \begin{align}\label{Equation: integrability condition around the origin}
        \int_0^{\rho_0}\frac{\sqrt{\phi(\rho)}}{\rho}d\mathcal{L}^1(\rho)<+\infty.
    \end{align}
    Then, the tangent cone to $A$ at $y$ is unique modulo gauge transformations.
    \end{proposition}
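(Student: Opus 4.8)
The plan is to follow the classical strategy of Simon \cite{AsymptoticSimon}: convert the decay of the energy density at dyadic scales into an $L^2$-summable control on how much the rescaled connections move, and then conclude by a Cauchy-sequence argument in $L^2$. The key mechanism is the almost monotonicity formula of Proposition \ref{Proposition: almost monotonicity formula}, which for an almost $\operatorname{YM}$-energy minimizer gives, for $0<\sigma<\rho<\dist(y,\partial\Omega)$,
\begin{align*}
    C\int_{B_{\rho}(y)\smallsetminus B_{\sigma}(y)}\frac{1}{\lvert\,\cdot\,\rvert^{n-4}}\lvert F_A\mres\nu_y\rvert^2\, d\mathcal{L}^n\le\frac{f(\rho)}{\rho^{n-4}}-\frac{f(\sigma)}{\sigma^{n-4}}+\rho^{\alpha}=e(\rho)-e(\sigma)+\rho^{\alpha},
\end{align*}
where $f(\rho)=\int_{B_\rho(y)}\lvert F_A\rvert^2\,d\mathcal{L}^n$ and $e(\rho)=\Theta(\rho,y;A)-\Theta(y;A)$. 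The radial-derivative term on the left, after passing to the sphere $\mathbb S^{n-1}$ via $\Psi_\rho$, controls $\int_{\mathbb S^{n-1}}\lvert\partial_\rho(\tau_{y,\rho}^*A)\rvert^2$ up to the scaling weights, so summing a telescoping series in dyadic annuli $B_{2^{-k}\rho_0}(y)\smallsetminus B_{2^{-k-1}\rho_0}(y)$ bounds $\sum_k \|A_{y,2^{-k}\rho_0}-A_{y,2^{-k-1}\rho_0}\|_{L^2(\mathbb S^{n-1})}$ by a constant times $\sum_k \sqrt{e(2^{-k}\rho_0)}$ plus a geometric tail coming from the $\rho^\alpha$ error.

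First I would make rigorous the passage from the monotonicity identity to a bound on $\int_\sigma^\rho \big(\int_{\mathbb S^{n-1}}|\partial_r (\tau_{y,r}^*A)|^2\,d\mathscr H^{n-1}\big)\,\frac{dr}{r}$, using that $F_A\mres\nu_y$ controls the $r$-derivative of the pulled-back connection on the sphere (this is the same computation as in \cref{Lemma: slicing lemma} and \cref{Proposition: almost monotonicity formula}); here one must be mildly careful about gauge — one works in a fixed exponential/Coulomb-type gauge on the punctured ball near $y$, which is legitimate since $y$ is an \emph{isolated} singularity and $\varepsilon$-regularity (as used in Section \ref{sec: proof of uniqueness}) gives smooth convergence on compact subsets of the punctured ball. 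Then, by Cauchy–Schwarz over a dyadic interval $[2^{-k-1}\rho_0,2^{-k}\rho_0]$, I get
\begin{align*}
    \|A_{y,2^{-k}\rho_0}-A_{y,2^{-k-1}\rho_0}\|_{L^2(\mathbb S^{n-1})}\le C\Big(e(2^{-k}\rho_0)-e(2^{-k-1}\rho_0)+(2^{-k}\rho_0)^{\alpha}\Big)^{1/2}\le C\sqrt{\phi(2^{-k}\rho_0)}+C(2^{-k}\rho_0)^{\alpha/2}.
\end{align*}
Summing over $k$ and comparing the series $\sum_k\sqrt{\phi(2^{-k}\rho_0)}$ with the integral $\int_0^{\rho_0}\frac{\sqrt{\phi(\rho)}}{\rho}\,d\rho$ (using monotonicity of $\phi$, exactly as in the standard Dini argument), together with the trivially convergent geometric series $\sum_k(2^{-k}\rho_0)^{\alpha/2}$, shows that $\{A_{y,2^{-k}\rho_0}\}_k$ is Cauchy in $L^2(\mathbb S^{n-1})$, hence converges to a unique limit $A_\infty$; interpolating between consecutive dyadic scales (again via the radial estimate) upgrades this to convergence of $A_{y,\rho}$ as $\rho\to0$, not just along the dyadic sequence. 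Any tangent cone is then a subsequential limit of $A_{y,\rho}$ modulo gauge, so it must equal $A_\infty$ modulo gauge, giving uniqueness.

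The main obstacle I anticipate is the bookkeeping with gauge transformations: the rescalings $\tau_{y,\rho}^*A$ are only well-defined up to gauge, and the monotonicity formula controls the gauge-invariant quantity $|F_A\mres\nu_y|$, so one needs to exhibit a single gauge on the punctured neighborhood in which the radial-derivative estimate literally bounds $\partial_\rho$ of the connection $1$-form (not just its curvature), and check that the limit obtained in this gauge is genuinely a connection. This is handled by fixing a transverse (radial) gauge near the isolated singularity — in which $A\mres\nu_y\equiv 0$, so that $F_A\mres\nu_y=\partial_r(\iota^*_{\partial B_r}A)$ up to lower-order terms — and then the estimate is exactly the scalar Dini computation; the remaining technicalities (measurability of slices, handling the singular part of $Df$, interpolation between dyadic scales) are routine and parallel to \cite[Section 3.15]{simon-book} and \cite[Section 3.2]{EngelsteinSpolaorVelichkov}.
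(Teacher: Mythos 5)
Your proposal is correct and follows essentially the same strategy as the paper's proof of Proposition \ref{Proposition: Dini decrease implies uniqueness of tangent cone}: pass to the radial (exponential) gauge near the isolated singularity so that $F_A\res\nu_y$ controls $\partial_\rho(\tau_{y,\rho}^*A)$ on the sphere, invoke the almost monotonicity formula of Proposition \ref{Proposition: almost monotonicity formula} to bound the weighted $L^2$-norm of $F_A\res\nu_y$ on annuli by $e(\rho)-e(\sigma)+\rho^\alpha$, and then sum over dyadic scales using the Dini hypothesis on $\phi$. The only differences are cosmetic — the paper directly compares two tangent cones via an $L^1(\mathbb{S}^{n-1})$ estimate obtained from H\"older, whereas you propose to show $A_{y,\rho}\vert_{\mathbb{S}^{n-1}}$ is Cauchy in $L^2(\mathbb{S}^{n-1})$ — and both reductions are equivalent, since tangent cones are conical and hence determined by their restriction to the sphere.
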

    \begin{proof}
        First, recall the almost monotonicity formula for almost $\operatorname{YM}$-energy minimizers given by Proposition \ref{Proposition: almost monotonicity formula}: there exist $C,\alpha>0$ such that for every $0<\sigma<\rho<\dist(y,\partial\Omega)$ we have
        \begin{align}\label{Equation: appendix monotonicity}
            \frac{1}{\rho^{n-4}}\int_{B_{\rho}(y)}\lvert F_A\rvert^2\, d\mathcal{L}^n-\frac{1}{\sigma^{n-4}}\int_{B_{\sigma}(y)}\lvert F_A\rvert^2\, d\mathcal{L}^n+\rho^{\alpha}\ge C\int_{B_{\rho}(y)\smallsetminus B_{\sigma}(y)}\frac{1}{\lvert\,\cdot\,\rvert^{n-4}}\lvert F_A\mres\nu_y\rvert^2\, d\mathcal{L}^n,
        \end{align}
        where we have defined
        \begin{align*}
            \nu_y:=\frac{\,\cdot\,-y}{\lvert\,\cdot\,-y\rvert} \qquad\mbox{ on } \R^n\smallsetminus\{y\}.
        \end{align*}
        As $y\in\operatorname{Sing}(A)$ is an isolated singularity for $A$, there exists $\rho_0\in (0,\dist(y,\partial\Omega))$ such that $A$ is smooth on $B_{\rho_0}(y)\smallsetminus\{0\}$ Since our result holds modulo gauge transformations, we can always assume that $A$ is in the celebrated Uhlenbeck exponential gauge around y, satisfying 
        \begin{align}\label{Equation: appendix 1}
            A\res\nu_y\equiv 0 \qquad\mbox{ on } B_{\rho_0}(y)\smallsetminus\{y\}.
        \end{align}
        By differentiating \eqref{Equation: appendix 1}, we get
        \begin{align}\label{Equation: appendix 2}
            \frac{\partial}{\partial\nu_y}(\lvert\,\cdot\,-y\rvert A)=\lvert\,\cdot\,-y\rvert F_A\res\nu_y \qquad\mbox{ on } B_{\rho_0}(y)\smallsetminus\{y\}.
        \end{align}
        Now, let $\varphi_1,\varphi_2$ be any two tangent cones to $A$ at the point $y$. By definition of tangent cone, there exist sequences $\{\rho_i\}_{i\in\N}\subset(0,\rho_0)$ and $\{\sigma_i\}_{i\in\N}\subset(0,\rho_0)$ such that $\rho_i,\sigma_i\to 0^+$ and
    	\begin{align*}
    		&A_{y,\rho_i}:=\tau_{y,\rho_i}^*A\rightharpoonup\varphi_1 \qquad \text{and} \qquad A_{y,\sigma_i}\rightharpoonup\varphi_2
    	\end{align*}
    	weakly in $W^{1,2}(\mathbb{B}^n)$ as $i\to+\infty$. By the weak continuity of the trace operator, we have 
    	\begin{align*}
    		A_{y,\rho_i}|_{\mathbb{S}^{n-1}}\rightharpoonup\varphi_1|_{\mathbb{S}^{n-1}} \qquad \text{and} \qquad A_{y,\sigma_i}|_{\mathbb{S}^{n-1}}\rightharpoonup\varphi_2|_{\mathbb{S}^{n-1}}
    	\end{align*}
    	weakly in $L^2(\mathbb{S}^{n-1})$ as $i\to+\infty$.  Thus, by \eqref{Equation: appendix 2} we have
        \begin{align}\label{Equation: appendix 3}
            \nonumber
            \int_{\mathbb{S}^{n-1}}\lvert\varphi_1-\varphi_2\rvert\, d\mathscr{H}^{n-1}&\le\liminf_{i\to+\infty}\int_{\mathbb{S}^{n-1}}\lvert A_{y,\rho_i}-A_{y,\sigma_i}\rvert\, d\mathscr{H}^{n-1}\\
            \nonumber
            &=\liminf_{i\to+\infty}\int_{\mathbb{S}^{n-1}}\lvert \rho_iA(\rho_ix+y)-\sigma_iA(\sigma_ix+y)\rvert\, d\mathscr{H}^{n-1}(x)\\
            \nonumber
            &=\liminf_{i\to+\infty}\int_{\mathbb{S}^{n-1}}\bigg\lvert\int_{\sigma_i}^{\rho_i}\frac{\partial}{\partial\rho}(\rho A(\rho x+y))\, d\mathcal{L}^1(\rho)\bigg\rvert\, d\mathscr{H}^{n-1}(x)\\
            \nonumber
            &=\liminf_{i\to+\infty}\int_{\mathbb{S}^{n-1}}\int_{\sigma_i}^{\rho_i}\lvert (F_A\res\nu_y)(\rho x+y)\rvert\, d\mathcal{L}^1(\rho)\, d\mathscr{H}^{n-1}(x)\\
            \nonumber
            &=\liminf_{i\to+\infty}\int_{\sigma_i}^{\rho_i}\int_{\mathbb{S}^{n-1}}\rho\lvert (F_A\res\nu_y)(\rho x+y)\rvert\ d\mathscr{H}^{n-1}(x)\, d\mathcal{L}^1(\rho)\, \\
            \nonumber
            &=\liminf_{i\to+\infty}\int_{\sigma_i}^{\rho_i}\int_{\partial B_{\rho}(y)}\frac{1}{\rho^{n-2}}\lvert (F_A\res\nu_y)(z)\rvert\ d\mathscr{H}^{n-1}(z)\, d\mathcal{L}^1(\rho)\\
            &=\liminf_{i\to+\infty}\int_{B_{\rho_i}(y)\smallsetminus B_{\sigma_i}(y)}\frac{1}{\lvert\,\cdot\,-y\rvert^{n-2}}\lvert F_A\res\nu_y\rvert\, d\mathcal{L}^n.
        \end{align}
        By H\"older inequality, \eqref{Equation: appendix monotonicity} and the bound \eqref{Equation: assumptions bound} in the assumptions, for every $0<\sigma<\rho<\rho_0$ we have
        \begin{align*}
            \int_{B_{\rho}(y)\smallsetminus B_{\sigma}(y)}&\frac{1}{\lvert\,\cdot\,-y\rvert^{n-2}}\lvert F_A\res\nu_y\rvert\, d\mathcal{L}^n\\
            &\le\bigg(\int_{B_{\rho}(y)\smallsetminus B_{\sigma}(y)}\frac{1}{\lvert\,\cdot\,-y\rvert^{n-4}}\lvert F_A\res\nu_y\rvert^2\, d\mathcal{L}^n\bigg)^{\frac{1}{2}}\bigg(\int_{B_{\rho}(y)\smallsetminus B_{\sigma}(y)}\frac{1}{\lvert\,\cdot\,-y\rvert^{n}}\, d\mathcal{L}^n\bigg)^{\frac{1}{2}}\\
            &\le C\big((\log\rho-\log\sigma)(\Theta(\rho,y;A)-\Theta(y;A))\big)^{\frac{1}{2}}\\
            &\le C\big((\log\rho-\log\sigma)\phi(\rho)\big)^{\frac{1}{2}}.
        \end{align*}
        Fix any $0<\sigma<\rho<\rho_0$ and let $k\in\N$ be such that  $\rho/2^k\le\sigma$. From the previous estimate, for every $i\in\N$ we get 
        \begin{align*}
             \int_{B_{\rho/2^{i}}(y)\smallsetminus B_{\rho/2^{i+1}}(y)}\frac{1}{\lvert\,\cdot\,-y\rvert^{n-1}}\lvert F_A\res\nu_y\rvert\, d\mathcal{L}^n\le \tilde C\sqrt{\phi\bigg(\frac{\rho}{2^i}\bigg)}
        \end{align*}
        Then we have
        \begin{align}\label{Equation: estimate uniqueness of blow ups}
        \begin{split}
             \int_{B_{\rho}(y)\smallsetminus B_{\sigma}(y)}\frac{1}{\lvert\,\cdot\,-y\rvert^{n-2}}\lvert F_A\res\nu_y\rvert\, d\mathcal{L}^n&\le\int_{B_{\rho}(y)\smallsetminus B_{\rho/2^{k}}(y)}\frac{1}{\lvert\,\cdot\,-y\rvert^{n-2}}\lvert F_A\res\nu_y\rvert\, d\mathcal{L}^n\\
             &=\sum_{i=0}^{k-1}\int_{B_{\rho/2^{i}}(y)\smallsetminus B_{\rho/2^{i+1}}(y)}\frac{1}{\lvert\,\cdot\,-y\rvert^{n-2}}\lvert F_A\res\nu_y\rvert\, d\mathcal{L}^n\\
             &\le\sum_{i=0}^{n-1}\sqrt{\phi\bigg(\frac{\rho}{2^i}\bigg)}=\sum_{i=0}^{n-1}\sqrt{\phi\bigg(\frac{\rho}{2^i}\bigg)}\frac{2^i}{\rho}\frac{\rho}{2^i}\\
             &\le\int_0^{\rho}\frac{\sqrt{\phi(t)}}{t}\,d\mathcal{L}^1(t).
        \end{split}
        \end{align}
        By the assumption \eqref{Equation: integrability condition around the origin} and \eqref{Equation: appendix 3} we then get
        \begin{align*}
            \int_{\mathbb{S}^{n-1}}\lvert\varphi_1-\varphi_2\rvert\, d\mathscr{H}^{n-1}&\le\liminf_{i\to+\infty}\int_{B_{\rho_i}(y)\smallsetminus B_{\sigma_i}(y)}\frac{1}{\lvert\,\cdot\,-y\rvert^{n-2}}\lvert F_A\res\nu_y\rvert\, d\mathcal{L}^n\\
            &\le\liminf_{i\to+\infty}\int_0^{\rho}\frac{\sqrt{\phi(t)}}{t}\,d\mathcal{L}^1(t)=0.
        \end{align*}
        Hence, we have $\varphi_1|_{\mathbb{S}^{n-1}}=\varphi_2|_{\mathbb{S}^{n-1}}$. Since both $\varphi_1$ and $\varphi_2$ are conical connections, we conclude that $\varphi_1=\varphi_2$ and the statement follows. 
    \end{proof}
\bibliographystyle{amsalpha} 
\bibliography{main} 
\end{document}